\theoremstyle{plain}
\newtheorem{theorem}{Theorem}[section]
\newtheorem{proposition}[theorem]{Proposition}
\newtheorem{lemma}[theorem]{Lemma}
\newtheorem{corollary}[theorem]{Corollary}
\theoremstyle{definition}
\newtheorem{definition}[theorem]{Definition}
\newtheorem{remark}[theorem]{Remark}
\newtheorem{example}[theorem]{Example}
\theoremstyle{remark}
\numberwithin{equation}{section}
\newcommand{\e}{\mathrm{e}}
\newcommand{\FF}{\mathbb{F}}
\newcommand{\GG}{\mathbb{G}}
\newcommand{\NN}{\mathbb{N}}
\newcommand{\RR}{\mathbb{R}}
\newcommand{\cA}{\mathcal{A}}
\newcommand{\cF}{\mathcal{F}}
\newcommand{\cG}{\mathcal{G}}
\newcommand{\cM}{\mathcal{M}}
\newcommand{\diff}{\mathrm{d}}
\newcommand{\dd}{\,\mathrm{d}}
\newcommand{\loc}{\mathrm{loc}}
\newcommand{\1}{\mathbf{1}}
\newcommand{\rdbrack}{]\!]}
\newcommand*{\EX}[2][]{E^{#1}\left [ #2 \right ]}
\newcommand*{\cEX}[3][]{E^{#1}\left[ #2 \,\middle\vert\, #3 \right]}
\newcommand*{\as}[1]{#1\text{-a.s.}}
\newcommand*{\mart}[3][]{\cM_{#1}^{#2} #3}
\newcommand*{\op}[1]{\cA^{#1}}
\newcommand{\lac}{\stackrel{\loc}{\ll}}
\newcommand{\barG}{\overline{G}}
\begin{document}
\title{%
Single Jump Processes and Strict Local Martingales%
\footnote{We are deeply grateful to Martin Schweizer for a very careful reading of a first draft and various suggestions, which have considerably improved the quality of the paper. We would also like to thank an associate editor and a referee for helpful comments. Moreover, we gratefully acknowledge financial support by the Swiss Finance Institute and by the National Centre of Competence in Research ``Financial Valuation and Risk Management'' (NCCR FINRISK), Project D1 (Mathematical Methods in Financial Risk Management). The NCCR FINRISK is a research instrument of the Swiss National Science Foundation.}
}
\date{}
\author{%
  Martin Herdegen%
  \thanks{%
  ETH Z\"urich, Department of Mathematics, R\"amistrasse 101, CH-8092 Z\"urich, Switzerland, email
  \href{mailto:martin.herdegen@math.ethz.ch}{\nolinkurl{martin.herdegen@math.ethz.ch}}.
  }
  \and
  Sebastian Herrmann%
  \thanks{
  ETH Z\"urich, Department of Mathematics, R\"amistrasse 101, CH-8092 Z\"urich, Switzerland, email
  \href{mailto:sebastian.herrmann@math.ethz.ch}{\nolinkurl{sebastian.herrmann@math.ethz.ch}}.
  }
}
\maketitle

\begin{abstract}
Many results in stochastic analysis and mathematical finance involve local martingales. However, specific examples of \emph{strict} local martingales are rare and analytically often rather unhandy. We study local martingales that follow a given deterministic function up to a random time $\gamma$ at which they jump and stay constant afterwards. The (local) martingale properties of these \emph{single jump local martingales} are characterised in terms of conditions on the input parameters. This classification allows an easy construction of strict local martingales, uniformly integrable martingales that are not in $H^1$, etc. As an application, we provide a construction of a (uniformly integrable) martingale $M$ and a bounded (deterministic) integrand $H$ such that the stochastic integral $H \bullet M$ is a strict local martingale.
\end{abstract}

\vspace{0.5em}

{\small
\noindent \emph{Keywords} Single jump; Strict local martingales; Stochastic integrals

\vspace{0.25em}
\noindent \emph{AMS MSC 2010}
Primary,
60G48, 
60G44; 
Secondary,
60H05 

\vspace{0.25em}
\noindent \emph{JEL Classification}
Y80 
}

\section{Introduction}
\label{sec:introduction}

Strict local martingales, i.e., local martingales which are not martingales, play an important role in mathematical finance, e.g., in the context of modelling financial bubbles \cite{LoewensteinWillard2000, CoxHobson2005, Protter2013, Kreher2014} or in arbitrage theory \cite{Kardaras2012, GuasoniRasonyi2015}. Specific examples of strict local martingales are usually rather complicated, the classical example being the inverse Bessel process \cite{DelbaenSchachermayer1995.Bessel}. The aim of this paper is to study a very tractable class of processes and classify their (local) martingale properties. More precisely, we consider \emph{single jump local martingales}, i.e., processes $\mart{G}{F} = (\mart[t]{G}{F})_{t \in [0,\infty]}$ of the form
\begin{align}
\label{eqn:intro:MGF}
\mart[t]{G}{F}
&= F(t)\1_{\lbrace t < \gamma \rbrace } + \op{G} F(\gamma) \1_{\lbrace t \geq \gamma \rbrace },
\end{align}
where the jump time $\gamma$ is a $(0,\infty)$-valued random variable with distribution function $G$ and $F:[0,\infty) \to \RR$ a function that is ``locally absolutely continuous'' with respect to $G$. In words, each path $\mart[\cdot]{G}{F}(\omega)$ follows a deterministic function $F$ up to some random time $\gamma (\omega)$ and stays constant at $\op{G} F(\gamma(\omega))$ from time $\gamma(\omega)$ on. The function $\op{G} F$ is chosen such that $\mart{G}{F}$ becomes a martingale on the right-open interval $[0, t_G)$, where $t_G := \sup\lbrace t \geq 0 : G(t) < 1\rbrace \in (0, \infty]$ denotes the \emph{right endpoint} of the distribution function $G$. All local martingales studied in this article are of the form \eqref{eqn:intro:MGF}.

The two main advantages of single jump local martingales are their flexibility and tractability. They are flexible enough to include examples of processes in well-known martingale spaces. Considered on the closed interval $[0, t_G]$, or equivalently on $[0, \infty]$, $\mart{G}{F}$ can be either of the following: not even a semimartingale; a nonintegrable local martingale; an integrable strict local martingale; a uniformly integrable martingale which does not belong to $H^1$; an $H^1$-martingale (and of course an $H^p$-martingale for $p > 1$). Our main result is a complete characterisation of these five cases in terms of conditions on the two input parameters $G$ and $F$ (cf.~Figure \ref{fig:diagram}).
As for tractability, single jump local martingales are particularly suited for explicit calculations. For instance, we give a general, direct solution to the problem of finding a bounded (deterministic) integrand $H$ and a martingale $M$ such that the stochastic integral $H \bullet M$ is a strict local martingale. Moreover, using only direct arguments, the authors construct in \cite{HerdegenHerrmann2015.RemovalOfDrift} counter-examples to show that neither of the no-arbitrage conditions NA and NUPBR implies the other. Because of their simple structure, these counter-examples also provide more insight into the nature of the underlying result than the more complicated counter-examples already available.

While the distribution function $G$ of the random time $\gamma$ is a natural input parameter, the choice of $F$ as a second input parameter might be less clear. Another natural approach would be to start with a process $S_t := \delta(\gamma) \1_{\lbrace t \geq \gamma \rbrace }$ for a deterministic function $\delta: [0,\infty) \to \RR$. For $\delta = 1$, this is done in the literature on credit risk in the definition of the ``hazard martingale'', see e.g.~\cite[Proposition 2.1]{ElliottJeanblancYor2000}. If $\delta$ is sufficiently integrable, the compensator (or dual predictable projection; cf.~\cite{JacodShiryaev2003}) $S^{\rm p}$ exists and $M:= S-S^{\rm p}$ is a local martingale of the form \eqref{eqn:intro:MGF}. Yet another possibility is to start with a function $H:(0,\infty)\to\RR$ and to express the function $F = (\cA^G)^{-1} H$ in terms of $H$ and $G$ such that the process
\begin{align}
\label{eqn:intro:alternative parametrisation}
F(t)\1_{\lbrace t < \gamma \rbrace } + H(\gamma) \1_{\lbrace t \geq \gamma \rbrace }
\end{align}
is a martingale on $[0, t_G)$. This is the parametrisation used in \cite{ChouMeyer1975} and \cite{Dellacherie1970}; cf.~the next paragraph. There are at least two reasons why we start our parametrisation with the function $F$ instead of the jump size $\delta$ or the function $H$. First, it turns out that $F$ and $G$ are the natural objects to decide whether $\mart{G}{F}$ belongs to a certain (local) martingale space or not. For instance, if $\mart{G}{F}$ is integrable, then $\mart{G}{F}$ being a strict local martingale is equivalent to a nonvanishing limit $\lim_{t \uparrow\uparrow t_G} F(t)(1-G(t))$ (cf.~Lemma \ref{lem:mass}). If in addition $G$ has no point mass at $t_G$, then $M$ being an $H^1$-martingale is equivalent to $F(\cdot-)$ being $\diff G$-integrable (cf.~Lemma \ref{lem:H1 martingale}). Second, a natural generalisation is to allow the function $F$ to be random and to consider the corresponding process in its natural filtration (this is the subject of forthcoming work). Then the process can follow different trajectories prior to the random time $\gamma$, and observing its evolution corresponds to learning the conditional distribution of $\gamma$ over time. However, if one starts with a process $S_t = \delta \1_{\lbrace t \geq \gamma \rbrace }$ for a random variable $\delta$, such a learning effect is much harder to incorporate, because one would have to construct first the desired filtration and then compute the corresponding compensator. If one simply computes the compensator $S^{\rm p}$ (if is exists) in the natural filtration of $S$, then the local martingale $S-S^{\rm p}$ only has a single possible trajectory prior to $\gamma$ and all information is learnt in a single instant at time $\gamma$.

The study of single jump processes dates back to the classical papers by Dellacherie \cite{Dellacherie1970} and Chou and Meyer \cite{ChouMeyer1975}. Dellacherie \cite{Dellacherie1970} (see also Dellacherie and Meyer \cite[Chapter IV, No.~104]{DellacherieMeyer1978}) starts from the smallest filtration $\FF^\gamma$ with respect to which $\gamma$ is a stopping time. Among other things, he obtains a single jump local martingale by computing the compensator of the process $\1_{\lbrace t \geq \gamma \rbrace }$ in this filtration. He also uses single jump processes to give several counter-examples in the general theory of stochastic processes. However, his simplifying assumption that $t_G = \infty$ immediately excludes the possibility of strict local martingales (cf.~Lemma~\ref{lem:martingale}). In the same setting, Chou and Meyer \cite[Proposition~1]{ChouMeyer1975} show that any local $\FF^\gamma$-martingale null at zero is a (true) martingale on $[0,t_G)$ and of the form \eqref{eqn:intro:alternative parametrisation} with
\begin{align}
\label{eqn:intro:F}
F(t)
&= -\frac{1}{1-G(t)} \int_{(0,t]} H(v) \dd G(v),
\end{align}
and that, conversely, every process of this form is a local $\FF^\gamma$-martingale provided that $H$ is ``locally'' $\diff G$-integrable (so that \eqref{eqn:intro:F} is well defined) and $\Delta G(t_G) = 0$. Our Theorem \ref{thm:local martingale} (a) corresponds to the ``converse'' statement and shows that the localising sequence can be chosen to consist of stopping times with respect to the natural filtration of $\mart{G}{F}$. As this filtration is generally smaller than $\FF^\gamma$, we obtain a slightly stronger statement. \cite[Proposition~1]{ChouMeyer1975} also yields that in the case of $t_G < \infty$ and $\Delta G(t_G) > 0$, processes of the form \eqref{eqn:intro:alternative parametrisation} are always uniformly integrable martingales provided that $H$ is $\diff G$-integrable. Our Theorem \ref{thm:local martingale} (c) shows that in this case, the process is even an $H^1$-martingale. Single jump martingales also appear in the modelling of credit risk, see e.g.~\cite{BieleckiRutkowski2002,ElliottJeanblancYor2000,JeanblancRutkowski2000} and Remark \ref{rem:credit risk}. There the jump time models the default time of a financial asset, and single jump martingales are used to describe the \emph{hazard function} of the default time. Note that in credit risk modelling only single jump \emph{(true) martingales} are considered. To the best of our knowledge, our classification of the (local) martingale properties of single jump processes summarised in Figure \ref{fig:diagram} is new.

The remainder of the paper is structured as follows. Section~\ref{sec:analytic preliminaries} contains basic definitions and all analytic results necessary for the classification of single jump local martingales given in Section~\ref{sec:classification}. Section~\ref{sec:counterexample} presents the counter-example in stochastic integration theory mentioned above.

\section{Analytic preliminaries}
\label{sec:analytic preliminaries}

The proof of the classification of the (local) martingale properties of single jump local martingales is split up into a purely analytic and a stochastic part. In this section, we collect all analytic preliminaries. On a first reading, the reader may wish to go only up to Definition \ref{def:ACloc} and then jump directly to the stochastic part in Section \ref{sec:classification}.

We always fix a distribution function $G: \RR \to [0, 1]$ satisfying ${G(0) = 0}$ and ${G(\infty-)}:=\lim_{t \to \infty} G(t) = 1$. Recall that its \emph{right endpoint} is defined by
\begin{align*}
t_G
&:= \sup\lbrace t \geq 0 : G(t) < 1\rbrace \in (0, \infty].
\end{align*}
For notational convenience, set $G(\infty) := 1$. With this in mind, note that $\Delta G(\infty) = 0$, so that ${\Delta G(t_G) > 0}$ implies $t_G < \infty$. Also, $\diff G$ denotes the Lebesgue--Stieltjes measure on $(0,\infty)$ induced by $G$, and $L^1(\diff G)$ is the space of real-valued $\diff G$-integrable functions. Note that a Borel-measurable function ${\phi:(0,\infty)\to\RR}$ is $\diff G$-integrable if and only if it is $\diff G$-integrable on $(0,t_G)$, since $\diff G$ is concentrated on $(0,t_G]$ and a possible point mass at $t_G$ does not affect the integrability. We call $\phi$ \emph{locally $\diff G$-integrable}, abbreviated by $\phi\in L^1_\loc(\diff G)$, if ${\int_{(0,b]} \vert \phi(v) \vert \dd G(v) < \infty}$ for each $b \in (0, t_G)$. Finally, we set $\barG:=1-G$ which is often called the \emph{survival function} of $G$.

\subsection{Locally absolutely continuous functions}
\label{sec:ACloc}

Classically, a Borel-measurable function $F:[0,\infty)\to\RR$ is called absolutely continuous on the interval $(a,b]$ if there is a Lebesgue-integrable function $f:(a,b]\to\RR$ such that $F(t)-F(a) = {\int_a^t f(v) \dd v}$ for all $t \in (a,b]$; in this case, $f$ is unique a.e.~on $(a,b]$ and is called a density of $F$. Replacing the Lebesgue measure by $\diff G$, we say that $F$ is \emph{absolutely continuous with respect to $G$ on $(a,b]$} if there is a $\diff G$-integrable function $f:(a,b]\to\RR$ such that $F(t)-F(a) = \int_{(a,t]} f(v) \dd G(v)$ for all $t\in(a,b]$. Unlike the Lebesgue measure, $\diff G$ may have atoms, so that the precise choice of the integration domain in the previous integral is important. Our choice of a left-open and right-closed interval is natural as it forces an absolutely continuous $F$ to be right-continuous like $G$. Then $F$ itself induces a signed Lebesgue--Stieltjes measure $\diff F$ on $(a,b]$ which is absolutely continuous with respect to $\diff G$ (restricted to $(a,b]$) in the sense of measures, and $f$ is a version of the Radon--Nikodým density $\frac{\diff F}{\diff G}$ on $(a,b]$.

The following is a local version of this concept.

\begin{definition}
\label{def:ACloc}
A Borel-measurable function $F: [0,\infty) \to \RR$ is called \emph{locally absolutely continuous with respect to $G$ on $(0, t_G)$}, abbreviated as $F \lac G$, if $F$ is absolutely continuous with respect to $G$ on $(0, b]$ for all $0 < b < t_G$. In this case, a Borel-measurable function $f: (0,\infty) \to \RR$ is called a \emph{local density of $F$ with respect to $G$} if for all $0 < b < t_G$, $f$ is a version of the Radon--Nikodým density $\frac{\diff F}{\diff G}$ on $(0, b]$.
\end{definition}

The following result is an easy exercise in measure theory.
\begin{lemma}
\label{lem:ACloc:local density}
Let $F \lac G$. Then $F$ is càdlàg and of finite variation on the half-open interval $[0, t_G)$. Moreover, there exists a local density $f$ of $F$ with respect to $G$; it is $\diff G$-a.e.~unique on $(0, t_G)$ and locally $\diff G$-integrable with
\begin{align}
\label{eqn:lem:ACloc:local density}
\int_{(a, b]} f(v) \dd G(v)
&= F(b) - F(a), \quad 0\leq a < b < t_G.
\end{align}
\end{lemma}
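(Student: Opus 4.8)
The plan is to reduce the claim to a patching argument over an exhausting sequence of intervals $(0,b_n]$ with $b_n \uparrow\uparrow t_G$, using the classical (Lebesgue-measure) theory of absolute continuity transported to $\diff G$. First I would fix such a sequence, say $b_n := t_G - 1/n$ when $t_G < \infty$ and $b_n := n$ otherwise (adjusting so that $0 < b_1 < t_G$), and recall that by Definition \ref{def:ACloc} the restriction $F|_{(0,b_n]}$ is absolutely continuous with respect to $G$ on $(0,b_n]$, hence admits a $\diff G$-integrable density $f_n : (0,b_n] \to \RR$ with $F(t) - F(0) = \int_{(0,t]} f_n(v) \dd G(v)$ for all $t \in (0,b_n]$. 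The key consistency observation is that for $m \leq n$, the function $f_n|_{(0,b_m]}$ is also a density of $F$ on $(0,b_m]$, so by $\diff G$-a.e.~uniqueness of Radon--Nikod\'ym densities on $(0,b_m]$ we have $f_n = f_m$ $\diff G$-a.e.~on $(0,b_m]$. This lets me glue the $f_n$ into a single Borel function $f$ on $(0,t_G)$ (well-defined $\diff G$-a.e.), extended arbitrarily, e.g.\ by $0$, on $[t_G,\infty)$ so that it is defined on all of $(0,\infty)$; by construction $f$ restricts to a version of $\frac{\diff F}{\diff G}$ on every $(0,b]$ with $0 < b < t_G$ (any such $b$ lies in some $(0,b_n]$), which is exactly the defining property of a local density, and it is $\diff G$-a.e.~unique on $(0,t_G)$ because two local densities must agree on each $(0,b_n]$.

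Next I would establish \eqref{eqn:lem:ACloc:local density}. For $0 \le a < b < t_G$, pick $n$ with $b \le b_n$; then $f = f_n$ $\diff G$-a.e.\ on $(0,b_n]$, so
\begin{align*}
\int_{(a,b]} f(v) \dd G(v) = \int_{(0,b]} f_n(v) \dd G(v) - \int_{(0,a]} f_n(v) \dd G(v) = \bigl(F(b) - F(0)\bigr) - \bigl(F(a) - F(0)\bigr) = F(b) - F(a),
\end{align*}
using the defining identity for $f_n$ (and the convention $\int_{(0,0]} = 0$ when $a = 0$). Local $\diff G$-integrability of $f$ is immediate from this: for each $b \in (0,t_G)$, $\int_{(0,b]} |f| \dd G = \int_{(0,b]} |f_n| \dd G < \infty$.

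Finally, the regularity assertions on $[0,t_G)$ follow from \eqref{eqn:lem:ACloc:local density} together with standard properties of Lebesgue--Stieltjes integrals. Right-continuity at any $t_0 \in [0,t_G)$ follows from dominated convergence: $F(t) - F(t_0) = \int_{(t_0,t]} f \dd G \to 0$ as $t \downarrow t_0$, since $\1_{(t_0,t]} |f| \le \1_{(t_0,b_n]}|f| \in L^1(\diff G)$ for $t$ small; existence of left limits (c\`adl\`ag) follows similarly, or by noting $F$ is the difference of two nondecreasing functions (the Jordan-type decomposition via $f^+,f^-$), which automatically have one-sided limits; and finite variation on $[0,t_G)$ — understood as finite variation on each $[0,b]$, $b < t_G$ — holds because $\mathrm{Var}_{[0,b]}(F) \le \int_{(0,b]} |f| \dd G < \infty$. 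I do not expect any genuine obstacle here; the only point requiring a little care is the bookkeeping in the gluing step — making sure the a.e.-uniqueness is invoked on the correct (right-closed) intervals so that a possible atom of $\diff G$ at some $b_n$ does not cause an inconsistency — but since the nested intervals are all of the form $(0,b_m]$ and $b_m < t_G$ avoids any atom at $t_G$, this is routine. The phrase "locally absolutely continuous" in the lemma's regularity claims should be read as "on every compact subinterval of $[0,t_G)$", and I would state this explicitly to avoid ambiguity.
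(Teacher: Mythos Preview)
Your argument is correct and is exactly the standard patching construction one would expect for this ``easy exercise in measure theory''; the paper does not actually give a proof of this lemma, so there is nothing to compare against beyond noting that your write-up fills in the omitted details cleanly. One small remark: your closing comment about reading ``locally absolutely continuous'' in the regularity claims is slightly misplaced, since the lemma's regularity assertion speaks only of c\`adl\`ag and finite variation on $[0,t_G)$; your parenthetical clarification that finite variation on $[0,t_G)$ means finite variation on each $[0,b]$ with $b<t_G$ is the right point to make, but the phrase you quote does not appear in the statement.
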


A local density $f$ of $F \lac G$ with respect to $G$ is only $\diff G$-a.e.~unique on $(0, t_G)$ (and not on $(0,t_G]$) and may not be $\diff G$-integrable on $(0,t_G)$, so it may not be a classical Radon--Nikodým density. Nevertheless, we often write---in slight abuse of notation---$f = \frac{\diff F}{\diff G}$. This is justified on the one hand by the above lemma and on the other hand by the fact that we never consider $\frac{\diff F}{\diff G}$ outside $(0, t_G)$.

\begin{remark}
\label{rem:ACloc}
If $F\lac G$, then $F$ need not be càdlàg or of finite variation on the \emph{right-closed} interval $(0, t_G]$. Indeed, define $G: \RR \to [0, 1]$ by $G(t) = t \1_{[0, 1)}(t) + \1_{[1, \infty)}(t) $, i.e., $\diff G$ is a uniform distribution on $(0, 1)$ with $t_G = 1$, and let $F:[0, \infty) \to \RR$ be given by $F(t) = \1_{[0, 1)}(t)\sin\frac{1}{1- t}$. Then $F \lac G$ with local density
\begin{align*}
\frac{\diff F}{\diff G} (v)
&= \1_{(0, 1)}(v)\frac{1}{(1-v)^2} \cos \frac{1}{1- v}.
\end{align*}
However, $F$ is neither càdlàg nor of finite variation on $[0, 1]$.
\end{remark}

\subsection{The function $\op{G} F$}
\label{sec:op}
The first result of this section introduces and analyses the function $\op{G}F$ appearing in the definition of the process $\mart{G}{F}$. Its definition is motivated by the idea that $\mart{G}{F}$ should be a martingale on $[0,t_G]$ provided the function $F$ is nice enough. We refer to the discussion after the proof of Lemma \ref{lem:martingale} for more details.
\begin{lemma}
\label{lem:op}
Let $F \lac G$ and define the function ${\op{G} F: (0, \infty) \to \RR}$ by
\begin{align}
\label{eqn:lem:op}
\op{G} F(v)
&= 
\begin{cases}
F(v-) - \frac{\diff F}{\diff G}(v) \barG(v), \quad &v \in (0, t_G),\\
F(t_G-) \1_{\lbrace \Delta G(t_G) > 0\rbrace }, &v \geq t_G, \text{ if } \lim_{t \uparrow \uparrow t_G} F(t) \text{ exists in }\RR, \\
0, &v \geq t_G, \text{ if } \lim_{t \uparrow \uparrow t_G} F(t) \text{ does not exist in }\RR.
\end{cases}
\end{align}
Then $\op{G} F \in L^1_\loc(\diff G)$ and for all $0 \leq a < b < t_G$,
\begin{align}
\label{eqn:lem:op:integral formula}
\int_{(a, b]} \op{G} F(v)\dd G(v)
&= \Big[- F(v) \barG(v)\Big]^b_a.
\end{align}
Thus, 
\begin{align*}
\op{G} F
&= -\frac{\diff (F\barG)}{\diff G} \quad \diff G \text{-a.e.~on } (0, t_G).
\end{align*}
\end{lemma}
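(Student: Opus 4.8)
The plan is to establish the three assertions of Lemma \ref{lem:op} in order: first the integrability of $\op{G}F$ on compact subintervals, then the integral formula \eqref{eqn:lem:op:integral formula}, and finally the Radon--Nikodým identification, which will follow almost immediately from the integral formula. The key observation is that on $(0,t_G)$ we have $\op{G}F(v) = F(v-) - f(v)\barG(v)$ where $f = \frac{\diff F}{\diff G}$ is a local density guaranteed by Lemma \ref{lem:ACloc:local density}; the values of $\op{G}F$ on $[t_G,\infty)$ are irrelevant for \eqref{eqn:lem:op:integral formula} and for the a.e.\ statement since $\diff G$ restricted to compact subintervals of $(0,t_G)$ does not see them.

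First I would fix $0 < b < t_G$ and show $\op{G}F \in L^1((0,b], \diff G)$. By Lemma \ref{lem:ACloc:local density}, $f$ is locally $\diff G$-integrable, so $f \barG$ is $\diff G$-integrable on $(0,b]$ because $\barG$ is bounded by $1$. It remains to check that $v \mapsto F(v-)$ is $\diff G$-integrable on $(0,b]$. Since $F \lac G$, Lemma \ref{lem:ACloc:local density} tells us $F$ is càdlàg and of finite variation on $[0,t_G)$, hence bounded on the compact interval $[0,b]$; a bounded Borel function is trivially $\diff G$-integrable on $(0,b]$ (as $\diff G$ is finite there). Adding the two pieces gives $\op{G}F \in L^1_{\loc}(\diff G)$.

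Next, for the integral formula, I would compute $\int_{(a,b]} \op{G}F(v)\dd G(v)$ for $0 \le a < b < t_G$ by splitting it as $\int_{(a,b]} F(v-)\dd G(v) - \int_{(a,b]} f(v)\barG(v)\dd G(v)$. The second term I would handle via integration by parts for Lebesgue--Stieltjes integrals applied to the product $F\barG$ on $(a,b]$: since $F$ and $\barG = 1-G$ are both càdlàg of finite variation on $[0,t_G)$, one has $F(b)\barG(b) - F(a)\barG(a) = \int_{(a,b]} F(v-)\,\diff\barG(v) + \int_{(a,b]} \barG(v)\,\diff F(v)$, where I use the convention that integrating the left-limit of one factor against the differential of the other, summed the two ways, with no extra covariation term, is the correct form of the product rule when at least one integrand is taken as a left limit. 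Now $\diff\barG = -\diff G$ and, by \eqref{eqn:lem:ACloc:local density}, $\diff F = f\,\diff G$ on $(0,t_G)$, so this rearranges to $\int_{(a,b]} F(v-)\dd G(v) - \int_{(a,b]} f(v)\barG(v)\dd G(v) = -[F(v)\barG(v)]_a^b$, which is exactly \eqref{eqn:lem:op:integral formula}. The main obstacle here is bookkeeping the correct form of the Lebesgue--Stieltjes integration-by-parts formula with left limits so that the jump terms cancel cleanly; one must be careful that $\Delta(F\barG)(v) = F(v-)\Delta\barG(v) + \barG(v)\Delta F(v)$ holds pathwise, which is a direct algebraic check once one notices $\barG(v) = \barG(v-) + \Delta\barG(v)$.

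Finally, the a.e.\ identification is a soft consequence: \eqref{eqn:lem:op:integral formula} says that for every $0 \le a < b < t_G$, the $\diff G$-integral of $\op{G}F$ over $(a,b]$ equals $(F\barG)(a) - (F\barG)(b)$, i.e.\ $\op{G}F$ is a density of the signed Lebesgue--Stieltjes measure $-\diff(F\barG)$ on each $(0,b]$ with $b < t_G$. By uniqueness of Radon--Nikodým densities (or by a standard $\pi$-system/monotone class argument, since the intervals $(a,b]$ generate the Borel $\sigma$-algebra on $(0,t_G)$), it follows that $\op{G}F = -\frac{\diff(F\barG)}{\diff G}$ holds $\diff G$-a.e.\ on $(0,t_G)$. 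I expect this last step to be entirely routine; essentially all the content is in getting the integration-by-parts computation of Step two right.
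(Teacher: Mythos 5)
Your proposal is correct and follows essentially the same route as the paper: decompose $\op{G}F = F(\cdot-) - \frac{\diff F}{\diff G}\,\barG$ on $(0,t_G)$, get local $\diff G$-integrability from boundedness of $F(\cdot-)$ and $\barG$ on $(0,b]$ plus local integrability of the density, and derive \eqref{eqn:lem:op:integral formula} from $\diff F = \frac{\diff F}{\diff G}\dd G$ (associativity) combined with Lebesgue--Stieltjes integration by parts for $F\barG$ in exactly the left-limit form you state. The concluding $\diff G$-a.e.\ identification by uniqueness of Radon--Nikodým densities is likewise how the paper reads off the final claim.
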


\begin{proof}
Note that $\op{G}F$ is well defined by Lemma \ref{lem:ACloc:local density}.
To prove \eqref{eqn:lem:op:integral formula}, fix $0 \leq a < b < t_G$. The function $F(\cdot-)$ is càglàd and therefore bounded on $(a, b]$, the function $\barG$ is trivially bounded on $(a, b]$, and the function $\frac{\diff F}{\diff G}$ is $\diff G$-integrable on $(a, b]$ by \eqref{eqn:lem:ACloc:local density}. Thus, $\op{G} F \in L^1_\loc(\diff G)$. Associativity of Lebesgue--Stieltjes integrals together with an integration by parts gives the result via
\begin{align*}
\int_{(a, b]} \op{G} F(v) \dd G(v) 
&= \int_{(a, b]} F(v-) \dd G(v) - \int_{(a, b]} \frac{\diff F}{\diff G}(v) \barG(v) \dd G(v)\\
&= \int_{(a, b]} F(v-) \dd G(v) + \Big[- F(v)\barG(v)\Big]^b_a - \int_{(a,b]} F(v-) \dd G(v).\qedhere
\end{align*}
\end{proof}

In general, $\op{G} F$ is not $\diff G$-integrable on $(0,t_G]$. The next result lists some equivalent conditions when this is the case and draws an important consequence. 

\begin{lemma}
\label{lem:op:integrability}
Let $F \lac G$. Then the following are equivalent:
\begin{enumerate}
\item $\op{G}F \in L^1(\diff G)$.
\item $\left(\op{G} F \right)^- \in L^1(\diff G)$ and $\limsup_{t \uparrow \uparrow t_G} F(t)\barG(t) > -\infty$.
\item $\left(\op{G} F \right)^+ \in L^1(\diff G)$ and $\liminf_{t \uparrow \uparrow t_G} F(t)\barG(t) < \infty$.
\end{enumerate}
Moreover, each of the above implies that the limit $\lim_{t \uparrow\uparrow t_G} F(t)\barG(t)$ exists in $\RR$ and
\begin{align}
\label{eqn:lem:op:integrability:open}
\int_{(a, t_G)} \op{G} F(v) \dd G(v)
&= F(a)\barG(a) - \lim_{t \uparrow\uparrow t_G} F(t)\barG(t), \quad a \in [0, t_G).
\end{align}
\end{lemma}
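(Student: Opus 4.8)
The plan is to establish the chain of implications (a) $\Rightarrow$ (b) $\Rightarrow$ (c) $\Rightarrow$ (a), together with the ``moreover'' part, using the integral formula \eqref{eqn:lem:op:integral formula} from Lemma \ref{lem:op} as the central tool. The key observation is that by \eqref{eqn:lem:op:integral formula}, for any $a \in [0, t_G)$ and any $b \in (a, t_G)$ we have
\begin{align*}
\int_{(a,b]} \op{G} F(v) \dd G(v) = F(a)\barG(a) - F(b)\barG(b),
\end{align*}
so the behaviour of the partial integrals of $\op{G}F$ over $(a, t_G)$ is governed entirely by the behaviour of $F(b)\barG(b)$ as $b \uparrow\uparrow t_G$. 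This should make all three conditions essentially restatements of convergence of $b \mapsto F(b)\barG(b)$.

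First I would prove (a) $\Rightarrow$ (b) and the formula \eqref{eqn:lem:op:integrability:open} simultaneously. If $\op{G}F \in L^1(\diff G)$, then $(\op{G}F)^- \in L^1(\diff G)$ trivially, and dominated convergence applied to $\op{G}F \1_{(a,b]}$ as $b \uparrow\uparrow t_G$ (dominating function $|\op{G}F|\1_{(a,t_G)} \in L^1(\diff G)$) shows that $\lim_{b\uparrow\uparrow t_G} \int_{(a,b]} \op{G}F(v)\dd G(v)$ exists and equals $\int_{(a,t_G)}\op{G}F(v)\dd G(v)$. Combining this with the displayed identity above forces $F(b)\barG(b)$ to converge in $\RR$ as $b\uparrow\uparrow t_G$, and \eqref{eqn:lem:op:integrability:open} falls out; in particular the limit is finite, so $\limsup > -\infty$, giving (b). The implication (a) $\Rightarrow$ (c) is symmetric, replacing $(\op{G}F)^-$ by $(\op{G}F)^+$ and $\limsup$ by $\liminf$.

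For the reverse directions, I would prove (b) $\Rightarrow$ (a); then (c) $\Rightarrow$ (a) is again symmetric (or can be obtained by applying (b) $\Rightarrow$ (a) to $-F$, since $\op{G}(-F) = -\op{G}F$ and $\barG \geq 0$). Assume $(\op{G}F)^- \in L^1(\diff G)$ and $\limsup_{t\uparrow\uparrow t_G} F(t)\barG(t) > -\infty$. Fix $a \in [0, t_G)$ and use $(\op{G}F)^- \1_{(a,t_G)} \in L^1(\diff G)$ to write, for $b \in (a,t_G)$,
\begin{align*}
\int_{(a,b]} (\op{G}F)^+(v)\dd G(v) = F(a)\barG(a) - F(b)\barG(b) + \int_{(a,b]}(\op{G}F)^-(v)\dd G(v),
\end{align*}
whose right-hand side is bounded above, along the subsequence where $F(b)\barG(b)$ stays close to its $\limsup$, by a finite constant; since the left-hand side is nondecreasing in $b$, monotone convergence gives $(\op{G}F)^+ \in L^1(\diff G)$, hence $\op{G}F \in L^1(\diff G)$, i.e.\ (a). Once (a) is known, the ``moreover'' part (existence of the limit and \eqref{eqn:lem:op:integrability:open}) follows from the (a) $\Rightarrow$ (b) step already carried out.

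The main obstacle, and the only point requiring genuine care rather than bookkeeping, is handling the approach $t\uparrow\uparrow t_G$ through a $\limsup$ (resp.\ $\liminf$) rather than an honest limit in conditions (b) and (c): a priori $F(t)\barG(t)$ need not converge, and one must exploit the monotonicity of the partial integrals of the one-signed part $(\op{G}F)^\pm$ to upgrade a bound along a subsequence into global integrability, and only afterwards deduce that the full limit exists. A secondary technical point is that the local density $\frac{\diff F}{\diff G}$ entering $\op{G}F$ need not be $\diff G$-integrable on $(0,t_G)$, so one cannot simply integrate $\op{G}F$ termwise; everything must be routed through the telescoping identity \eqref{eqn:lem:op:integral formula}, which is exactly why that identity was isolated in Lemma \ref{lem:op}.
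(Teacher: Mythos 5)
Your proposal is correct and follows essentially the same route as the paper: both directions are driven by the telescoping identity \eqref{eqn:lem:op:integral formula}, with dominated convergence yielding (a) $\Rightarrow$ (b), (c) together with the existence of $\lim_{t\uparrow\uparrow t_G}F(t)\barG(t)$ and \eqref{eqn:lem:op:integrability:open}, and a one-sided limit argument for the converse. The only (immaterial) difference is that for (b) $\Rightarrow$ (a) you bound the increasing partial integrals of $\left(\op{G}F\right)^+$ along a subsequence and apply monotone convergence, whereas the paper applies Fatou's lemma to $\left(\op{G}F\right)^+$ and dominated convergence to $\left(\op{G}F\right)^-$; likewise your reduction of (c) to (b) via $-F$ matches the paper's ``analogous'' argument.
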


\begin{proof}
``(a) $\Rightarrow$ (b), (c)'': If $\op{G}F \in L^1(\diff G)$, then $\left(\op{G} F \right)^\pm \in L^1(\diff G)$, and dominated convergence and \eqref{eqn:lem:op:integral formula} give
\begin{align*}
\int_{(0, t_G)} \op{G} F(v) \dd G(v) 
&= \lim_{t \uparrow \uparrow t_G} \int_{(0, t]} \op{G} F(v) \dd G(v) 
= \lim_{t \uparrow \uparrow t_G} \Big[-F(v)\barG(v)\Big]_0^t \\
&= F(0) - \lim_{t \uparrow \uparrow t_G} F(t)\barG(t).
\end{align*}
This shows that $\lim_{t \uparrow \uparrow t_G} F(t)\barG(t)$ exists, and \eqref{eqn:lem:op:integrability:open} is satisfied first for $a = 0$ and then, by \eqref{eqn:lem:op:integral formula}, for any $a \in (0, t_G)$.

``(b) $\Rightarrow$ (a)'': Since $\left(\op{G} F \right)^- \in L^1(\diff G)$, it suffices to show that $\int_{(0, t_G)} \op{G} F(v) \dd G(v) < \infty$.
Fatou's lemma applied to $(\op{G}F)^+$, dominated convergence for $(\op{G}F)^-$ and \eqref{eqn:lem:op:integral formula} give
\begin{align*}
\int_{(0, t_G)} \op{G} F(v) \dd G(v) 
&\leq \liminf_{t \uparrow \uparrow t_G}\int_{(0, t]} \op{G} F(v) \dd G(v) 
=F(0) - \limsup_{t \uparrow \uparrow t_G} F(t)\barG(t) < \infty.
\end{align*}

``(c) $\Rightarrow$ (a)'': This is analogous to the proof of ``(b) $\Rightarrow$ (a)''.
\end{proof}

The following result provides further characterisations of the $\diff G$-integrability of $\op{G} F$ in the case ${\Delta G(t_G) > 0}$. In particular, it shows that if $\op{G} F$ is $\diff G$-integrable, then the limit in the second line of the definition of $\op{G} F$ in \eqref{eqn:lem:op} exists in $\RR$.

\begin{lemma}
\label{lem:op:integrability:DeltaG>0}
Let $F \lac G$ and suppose that $\Delta G(t_G) > 0$. The following are equivalent:
\begin{enumerate}
\item $\op{G} F \in L^1(\diff G)$.
\item $\frac{\diff F}{\diff G}\barG \in L^1(\diff G)$.
\item $\frac{\diff F}{\diff G} \in L^1(\diff G)$.
\item $F$ is of finite variation on $[0, t_G]$.
\end{enumerate}
Each of the above implies that the limit $\lim_{t \uparrow \uparrow t_G} F(t)$ exists in $\RR$.
\end{lemma}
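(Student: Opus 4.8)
The plan is to convert the hypothesis $\Delta G(t_G)>0$ into the single structural fact that $\barG$ is bounded away from $0$ and from $\infty$ on $(0,t_G)$, and then to prove the equivalences by closing a short cycle (c)~$\Rightarrow$~(a)~$\Rightarrow$~(b)~$\Rightarrow$~(c) via Lemmas~\ref{lem:op} and~\ref{lem:op:integrability}, handling (d) separately through total variation. First I would note that $G$ is right-continuous and nondecreasing with $G(t)=1$ for $t>t_G$, so $\Delta G(t_G)>0$ forces $t_G<\infty$ and $\barG(t_G-)=1-G(t_G-)=\Delta G(t_G)>0$; since $\barG$ is nonincreasing with $\barG(0)=1$, this gives $0<\Delta G(t_G)\le\barG(v)\le 1$ for every $v\in(0,t_G)$. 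As $\diff G$ is concentrated on $(0,t_G]$ with only a finite atom at $t_G$, membership in $L^1(\diff G)$ depends only on behaviour on $(0,t_G)$, where multiplying by $\barG$ is immaterial for integrability; this yields (b)~$\Leftrightarrow$~(c) at once.

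For (c)~$\Rightarrow$~(a), I would use Lemma~\ref{lem:ACloc:local density} to write $F(v)=F(0)+\int_{(0,v]}\frac{\diff F}{\diff G}\dd G$ for $v\in[0,t_G)$, hence $F(v-)=F(0)+\int_{(0,v)}\frac{\diff F}{\diff G}\dd G$ and, under (c), $\sup_{v\in(0,t_G)}\lvert F(v-)\rvert\le\lvert F(0)\rvert+\int_{(0,t_G)}\lvert\frac{\diff F}{\diff G}\rvert\dd G<\infty$; letting $v\uparrow\uparrow t_G$ with dominated convergence also shows $\lim_{t\uparrow\uparrow t_G}F(t)$ exists in $\RR$, which is the ``Moreover'' claim (it transfers to all four conditions once their equivalence is established). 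Since $\diff G$ is finite on $(0,t_G)$, $F(\cdot-)\in L^1(\diff G)$ there; combined with $\frac{\diff F}{\diff G}\barG\in L^1(\diff G)$ (from (c)) and the identity $\op{G}F=F(\cdot-)-\frac{\diff F}{\diff G}\barG$ on $(0,t_G)$ from Lemma~\ref{lem:op}, plus the remark that $L^1(\diff G)$-membership only depends on $(0,t_G)$, this gives $\op{G}F\in L^1(\diff G)$, i.e., (a). For (a)~$\Rightarrow$~(b), Lemma~\ref{lem:op:integrability} gives that $\lim_{t\uparrow\uparrow t_G}F(t)\barG(t)$ exists in $\RR$; dividing by $\barG(t)\to\Delta G(t_G)>0$ shows $\lim_{t\uparrow\uparrow t_G}F(t)$ exists in $\RR$, so $F$ has a finite limit at $t_G$ and, being càdlàg on $[0,t_G)$, $F(\cdot-)$ is bounded on $(0,t_G)$, hence in $L^1(\diff G)$ there; then $\frac{\diff F}{\diff G}\barG=F(\cdot-)-\op{G}F\in L^1(\diff G)$ on $(0,t_G)$, which is (b). This closes the cycle, so (a), (b), (c) are equivalent.

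For (c)~$\Leftrightarrow$~(d) I would argue via total variation. For each $b<t_G$, absolute continuity of $F$ with respect to $G$ on $(0,b]$ means the total variation measure of $\diff F$ on $(0,b]$ is $\lvert\frac{\diff F}{\diff G}\rvert\dd G$, so $\mathrm{Var}(F;[0,b])=\int_{(0,b]}\lvert\frac{\diff F}{\diff G}\rvert\dd G$, and by monotone convergence $\sup_{b<t_G}\mathrm{Var}(F;[0,b])=\int_{(0,t_G)}\lvert\frac{\diff F}{\diff G}\rvert\dd G$. If (d) holds, this supremum is at most $\mathrm{Var}(F;[0,t_G])<\infty$, giving (c). Conversely, if (c) holds then $F(t_G-)$ exists in $\RR$ by the argument above, and splitting an arbitrary partition of $[0,t_G]$ at its penultimate point and letting that point tend to $t_G$ gives $\mathrm{Var}(F;[0,t_G])=\sup_{b<t_G}\mathrm{Var}(F;[0,b])+\lvert F(t_G)-F(t_G-)\rvert<\infty$, i.e., (d).

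I expect the last step to be the only genuinely delicate one: passing between finite variation on the \emph{closed} interval $[0,t_G]$ and $L^1(\diff G)$-integrability of the local density requires the decomposition of $\mathrm{Var}(F;[0,t_G])$ into the interior variation $\int_{(0,t_G)}\lvert\frac{\diff F}{\diff G}\rvert\dd G$ plus the jump $\lvert F(t_G)-F(t_G-)\rvert$ at the right endpoint, and hence a careful treatment of the left limit of $F$ at $t_G$, which need not exist for general $F\lac G$ (cf.~Remark~\ref{rem:ACloc}). Everything else is a routine consequence of the earlier lemmas and the two-sided bound on $\barG$ coming from $\Delta G(t_G)>0$.
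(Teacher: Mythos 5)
Your proposal is correct and follows essentially the same route as the paper: the two-sided bound $\Delta G(t_G)\leq\barG\leq 1$ on $(0,t_G)$ gives (b)$\Leftrightarrow$(c), the identity $\op{G}F=F(\cdot-)-\frac{\diff F}{\diff G}\barG$ together with boundedness of $F$ (obtained via Lemma~\ref{lem:op:integrability} and $\barG(t)\to\Delta G(t_G)>0$ under (a), and via the finite limit $F(t_G-)$ under (c)) links (a) with (b), and (c)$\Leftrightarrow$(d) is the standard total-variation identity, which you merely spell out in more detail (the paper cites it as standard); your slightly different routing of the implications is purely organizational.
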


\begin{proof}
The last statement follows immediately from (d), ``(d) $\Leftrightarrow$ (c)'' is a standard result in analysis, and ``(c) $\Leftrightarrow$ (b)'' follows immediately from the fact that the function $\barG$ is bounded above by $1$ and below by $\barG(t_G-)=\Delta G(t_G) > 0$ on $(0, t_G)$.

For ``(b) $\Leftrightarrow$ (a)'', it suffices to show that the function $F$ is bounded on the compact interval $[0, t_G]$. (Recall that $\Delta G(t_G) > 0$ implies $t_G < \infty$.) Since $F$ is càdlàg on $[0, t_G)$, it is enough to show that the limit $\lim_{t \uparrow\uparrow t_G} F(t)$ exists in $\RR$. Assuming (b), this follows from the equivalence \hbox{``(b) $\Leftrightarrow$ (d)''} and the first part of the proof. Assuming (a), this follows via Lemma \ref{lem:op:integrability} from the fact that the limit $\lim_{t \uparrow \uparrow t_G} F(t)\barG(t)$ exists in $\RR$ and that $\lim_{t \uparrow \uparrow t_G} \barG(t) = \Delta G(t_G) > 0$.
\end{proof}

\subsection{Decomposition of locally absolutely continuous functions}
\label{sec:decomposition}

Let $F \lac G$. Define the functions $F^\uparrow, F^\downarrow,\vert F \vert:[0, \infty) \to \RR$ by
\begin{align*}
F^{\uparrow/\downarrow}(t)
&= 
\begin{cases}
\int_{(0, t]} \left( \frac{\diff F}{\diff G}(v) \right)^{+/-} \dd G(v) &\text{if } t < t_G, \\
0 &\text{if } t \geq t_G,
\end{cases}\\
\vert F \vert(t)
&= 
\begin{cases}
\int_{(0, t]} \left| \frac{\diff F}{\diff G}(v) \right| \dd G(v) &\text{if } t < t_G, \\
0 &\text{if } t \geq t_G.
\end{cases}
\end{align*}
$F^\uparrow, F^\downarrow, \vert F \vert$ are well defined by Lemma \ref{lem:ACloc:local density}, null at $0$, nonnegative and increasing on $[0, t_G)$, and satisfy
\begin{align}
\label{eqn:ACloc:decompostion:1}
F \1_{[0, t_G)}
&= F(0)\1_{[0, t_G)} + F^\uparrow- F^\downarrow,\\
\vert F \vert
&= F^\uparrow + F^\downarrow.\notag
\end{align}
Restricted to $[0,t_G)$, $\vert F \vert$ is simply the total variation of $F$ and $F^{\uparrow/\downarrow}$ is the positive/negative variation of $F$ shifted to null at $0$.

The following result shows that if $F \lac G$ and $\op{G} F \in L^1(\diff G)$, then the analogous properties hold for $F^\uparrow, F^\downarrow, \vert F \vert$, too.

\begin{lemma}
\label{lem:op:decomposition}
Let $F \lac G$ be such that $\op{G} F \in L^1(\diff G)$.
Then $F^\uparrow, F^\downarrow, \vert F \vert\lac G$ and $\op{G} (F^\uparrow)$, $\op{G}(F^\downarrow)$, $\op{G} \vert F \vert$ are in $L^1(\diff G)$. Moreover,
\begin{align}
\label{eqn:lem:op:decomposition:1}
\op{G} F
&= F(0) + \op{G} (F^\uparrow) - \op{G} (F^\downarrow)\;\; \diff G \text{-a.e.},\\
\label{eqn:lem:op:decomposition:2}
\op{G} \vert F \vert
&= \op{G} (F^\uparrow) + \op{G} (F^\downarrow) \;\; \diff G \text{-a.e.}
\end{align}
\end{lemma}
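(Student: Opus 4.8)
The statement asserts three things: that $F^\uparrow,F^\downarrow,\vert F\vert$ are again locally absolutely continuous with respect to $G$; that $\op{G}(F^\uparrow),\op{G}(F^\downarrow),\op{G}\vert F\vert$ lie in $L^1(\diff G)$; and the two decomposition identities. The plan is to dispose of the first claim and of the identities on $(0,t_G)$ by a direct computation, reduce the integrability claims to the single assertion $\op{G}\vert F\vert\in L^1(\diff G)$, and prove that assertion via Lemma~\ref{lem:op:integrability}.

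First I would record that $F^{\uparrow/\downarrow}$ and $\vert F\vert$ are, by definition, the $\diff G$-primitives on each $(0,b]$, $b<t_G$, of $(\tfrac{\diff F}{\diff G})^{+/-}$ and $\vert\tfrac{\diff F}{\diff G}\vert$; since $\tfrac{\diff F}{\diff G}\in L^1_\loc(\diff G)$ by Lemma~\ref{lem:ACloc:local density}, these functions lie in $L^1_\loc(\diff G)$, so $F^\uparrow,F^\downarrow,\vert F\vert\lac G$ with local densities $(\tfrac{\diff F}{\diff G})^+$, $(\tfrac{\diff F}{\diff G})^-$, $\vert\tfrac{\diff F}{\diff G}\vert$ respectively (uniqueness $\diff G$-a.e.\ again from Lemma~\ref{lem:ACloc:local density}). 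In particular $\op{G}(F^\uparrow),\op{G}(F^\downarrow),\op{G}\vert F\vert$ are well defined via \eqref{eqn:lem:op}. Plugging these densities into \eqref{eqn:lem:op} and using \eqref{eqn:ACloc:decompostion:1} in the limit from the left---that is, $F(v-)=F(0)+F^\uparrow(v-)-F^\downarrow(v-)$ and $\vert F\vert(v-)=F^\uparrow(v-)+F^\downarrow(v-)$, together with the pointwise identities $(\tfrac{\diff F}{\diff G})^+\mp(\tfrac{\diff F}{\diff G})^-=\tfrac{\diff F}{\diff G}$ resp.\ $\vert\tfrac{\diff F}{\diff G}\vert$---yields \eqref{eqn:lem:op:decomposition:1} and \eqref{eqn:lem:op:decomposition:2} pointwise on $(0,t_G)$. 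This is routine algebra.

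Since $\diff G$ is a probability measure on $(0,\infty)$, constants are $\diff G$-integrable, so the two identities reduce the integrability claims to $\op{G}\vert F\vert\in L^1(\diff G)$: then $\op{G}(F^\uparrow)=\tfrac12(\op{G}\vert F\vert+\op{G}F-F(0))$ and $\op{G}(F^\downarrow)=\tfrac12(\op{G}\vert F\vert-\op{G}F+F(0))$ are $\diff G$-a.e.\ equal to $L^1(\diff G)$-functions. To prove $\op{G}\vert F\vert\in L^1(\diff G)$ I would apply Lemma~\ref{lem:op:integrability} to $\vert F\vert$ in the role of $F$, using criterion (b). The $\limsup$-condition there is automatic since $\vert F\vert\,\barG\ge 0$, whence $\limsup_{t\uparrow\uparrow t_G}\vert F\vert(t)\barG(t)\ge 0>-\infty$. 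It remains to bound $(\op{G}\vert F\vert)^-$: from $\op{G}\vert F\vert(v)=\vert F\vert(v-)-\vert\tfrac{\diff F}{\diff G}(v)\vert\,\barG(v)$, the triangle inequality applied to $\op{G}F(v)=F(v-)-\tfrac{\diff F}{\diff G}(v)\barG(v)$ (so $\vert\tfrac{\diff F}{\diff G}(v)\vert\barG(v)\le\vert F(v-)\vert+\vert\op{G}F(v)\vert$), and $\vert F(v-)\vert\le\vert F(0)\vert+\vert F\vert(v-)$, one gets
\[
\bigl(\op{G}\vert F\vert\bigr)^-(v)=\Bigl(\bigl\vert\tfrac{\diff F}{\diff G}(v)\bigr\vert\,\barG(v)-\vert F\vert(v-)\Bigr)^{+}\le\bigl\vert\op{G}F(v)\bigr\vert+\vert F(0)\vert,\qquad v\in(0,t_G),
\]
while at $v=t_G$ (relevant only if $\Delta G(t_G)>0$) one has $\op{G}\vert F\vert(t_G)\ge 0$ directly from \eqref{eqn:lem:op}, so $(\op{G}\vert F\vert)^-(t_G)=0$. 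Hence $(\op{G}\vert F\vert)^-\le\vert\op{G}F\vert+\vert F(0)\vert$ $\diff G$-a.e., which is $\diff G$-integrable because $\op{G}F\in L^1(\diff G)$; Lemma~\ref{lem:op:integrability} then gives $\op{G}\vert F\vert\in L^1(\diff G)$. I expect this step---recognising that criterion (b) is the right one and that the troublesome term $F(v-)$ can be absorbed into $\vert F(0)\vert+\vert F\vert(v-)$---to be the only genuinely non-routine point.

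It remains to upgrade the identities from ``pointwise on $(0,t_G)$'' to ``$\diff G$-a.e.'', which matters only if $\Delta G(t_G)>0$ (otherwise $\diff G(\{t_G\})=0$). In that case $\op{G}\vert F\vert\in L^1(\diff G)$ forces, by Lemma~\ref{lem:op:integrability}, the limit $\lim_{t\uparrow\uparrow t_G}\vert F\vert(t)\barG(t)$ to exist in $\RR$; since $\barG(t_G-)=\Delta G(t_G)>0$, also $\lim_{t\uparrow\uparrow t_G}\vert F\vert(t)$ exists in $\RR$, hence so do $\lim_{t\uparrow\uparrow t_G}F^\uparrow(t)$ and $\lim_{t\uparrow\uparrow t_G}F^\downarrow(t)$ (monotone, bounded) and therefore $\lim_{t\uparrow\uparrow t_G}F(t)$; formula \eqref{eqn:lem:op} at $v=t_G$ then evaluates to the corresponding left-hand limits, and \eqref{eqn:lem:op:decomposition:1}, \eqref{eqn:lem:op:decomposition:2} persist there by the same algebra. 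This is pure bookkeeping, and collecting everything completes the proof.
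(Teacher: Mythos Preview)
Your proof is correct and follows essentially the same route as the paper: both use criterion (b) of Lemma~\ref{lem:op:integrability}, obtain the $\limsup$-condition for free from nonnegativity, and bound the negative part pointwise by $\lvert\op{G}F\rvert+\lvert F(0)\rvert$. The only cosmetic difference is that the paper applies this argument to $F^\uparrow$ (then says $F^\downarrow$ is analogous and $\lvert F\vert$ follows from \eqref{eqn:lem:op:decomposition:2}), whereas you apply it to $\lvert F\vert$ and recover $F^{\uparrow/\downarrow}$ by solving the linear system \eqref{eqn:lem:op:decomposition:1}--\eqref{eqn:lem:op:decomposition:2}; your careful treatment of the value at $t_G$ when $\Delta G(t_G)>0$ is a point the paper simply absorbs into ``easy calculations''.
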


\begin{proof}
$F^\uparrow, F^\downarrow, \vert F \vert \lac G$ is clear from the definitions, and \eqref{eqn:lem:op:decomposition:1} and \eqref{eqn:lem:op:decomposition:2} are easy calculations. Among the remaining claims, we only show $\op{G} (F^\uparrow) \in L^1(\diff G)$; $\op{G} (F^\downarrow) \in L^1(\diff G)$ follows analogously, and then $\op{G} \vert F\vert \in L^1(\diff G)$ follows from \eqref{eqn:lem:op:decomposition:2}.

We show that $\op{G} (F^\uparrow) \in L^1(\diff G)$ by using the implication ``(b) $\Rightarrow$ (a)'' in Lemma \ref{lem:op:integrability}. On the one hand, nonnegativity of $F^\uparrow$ gives $\limsup_{t \uparrow\uparrow t_G} F^\uparrow(t)\barG(t) \geq 0 > - \infty$. On the other hand, \eqref{eqn:ACloc:decompostion:1} together with nonnegativity of $F^\uparrow$ gives
\begin{align*}
\op{G} (F^\uparrow)
&= F^\uparrow(\cdot -) - \left(\frac{\diff F}{\diff G} \right)^+\barG
\geq \left((F(\cdot-) - F(0)) - \frac{\diff F}{\diff G}\barG \right)\1_{\lbrace \frac{\diff F}{\diff G} > 0\rbrace } \notag\\
&\geq \op{G} F \1_{\lbrace \frac{\diff F}{\diff G}> 0\rbrace } - |F(0)| 
\geq -|\op{G} F| - |F(0)| \;\; \diff G \text{-a.e.~on } (0, t_G),
\end{align*}
and hence,
\begin{align*}
\int_{(0, t_G)} \left(\op{G} (F^\uparrow) (v)\right)^- \dd G(v)
&\leq \int_{(0, t_G)} \left|\op{G} F (v)\right| \dd G(v) + |F(0)|
< \infty.\qedhere
\end{align*}
\end{proof}

The following lemma is in some sense the counterpart to Lemma \ref{lem:op:integrability:DeltaG>0} for the case $\Delta G(t_G) = 0$.

\begin{lemma}
\label{lem:Fbar minus:integrability}
Let $F \lac G$ and suppose that $\Delta G(t_G) = 0$. The following are equivalent:
\begin{enumerate}
\item $F(\cdot -) \in L^1(\diff G)$ and $\op{G} F \in L^1(\diff G)$.
\item $F(\cdot -) \in L^1(\diff G)$ and $\left(\op{G} F \right)^- \in L^1(\diff G)$.
\item $F(\cdot -) \in L^1(\diff G)$ and $\left(\op{G} F \right)^+ \in L^1(\diff G)$.
\item $\frac{\diff F}{\diff G}\barG \in L^1(\diff G)$.
\item $\vert F \vert(\cdot-) \in L^1(\diff G)$.
\end{enumerate}
\end{lemma}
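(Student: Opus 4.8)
The plan is to prove the circle of implications (a)~$\Rightarrow$~(b), (a)~$\Rightarrow$~(c), (b)~$\Rightarrow$~(e), (c)~$\Rightarrow$~(e), (e)~$\Rightarrow$~(a), supplemented by the separate equivalence (d)~$\Leftrightarrow$~(e); this puts all five statements into one equivalence class. Two structural facts do most of the work. First, Lemma~\ref{lem:op} gives the pointwise identity $\frac{\diff F}{\diff G}\,\barG = F(\cdot-)-\op{G} F$ $\diff G$-a.e.\ on $(0,t_G)$. Second, $\Delta G(t_G)=0$ means $\diff G$ is concentrated on the \emph{open} interval $(0,t_G)$, so every $L^1(\diff G)$-integrability question reduces to $(0,t_G)$; moreover $\barG>0$ on $(0,t_G)$, so multiplication by $\barG$ commutes with taking positive and negative parts.

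The one real computation I would carry out first is a Tonelli identity. If $\psi\ge 0$ is locally $\diff G$-integrable with primitive $\Psi(t):=\int_{(0,t]}\psi\dd G$ for $t<t_G$, then $\Psi(t-)=\int_{(0,t)}\psi\dd G$ by monotone convergence, and since $\Delta G(t_G)=0$ gives $\int_{(v,t_G)}\dd G = G(t_G-)-G(v)=\barG(v)$, Tonelli yields
\begin{align*}
\int_{(0,t_G)}\Psi(t-)\dd G(t)
&= \int_{(0,t_G)}\Big(\int_{(0,t)}\psi(v)\dd G(v)\Big)\dd G(t)
= \int_{(0,t_G)}\psi(v)\,\barG(v)\dd G(v).
\end{align*}
Applied with $\psi=\vert\frac{\diff F}{\diff G}\vert,(\frac{\diff F}{\diff G})^{+},(\frac{\diff F}{\diff G})^{-}$, whose primitives are exactly $\vert F\vert,F^{\uparrow},F^{\downarrow}$, this gives $\int_{(0,t_G)}\vert F\vert(t-)\dd G(t)=\int_{(0,t_G)}\vert\frac{\diff F}{\diff G}(v)\vert\,\barG(v)\dd G(v)$, which is precisely (d)~$\Leftrightarrow$~(e), and likewise $F^{\uparrow}(\cdot-)\in L^1(\diff G)\Leftrightarrow(\frac{\diff F}{\diff G})^{+}\barG\in L^1(\diff G)$ and $F^{\downarrow}(\cdot-)\in L^1(\diff G)\Leftrightarrow(\frac{\diff F}{\diff G})^{-}\barG\in L^1(\diff G)$.

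The remaining implications are then routine bookkeeping using $(x-y)^{+}\le\vert x\vert+y^{-}$ and the decomposition \eqref{eqn:ACloc:decompostion:1}. The implications (a)~$\Rightarrow$~(b) and (a)~$\Rightarrow$~(c) are trivial. For (b)~$\Rightarrow$~(e): $(\frac{\diff F}{\diff G})^{+}\barG=(\frac{\diff F}{\diff G}\,\barG)^{+}=(F(\cdot-)-\op{G} F)^{+}\le\vert F(\cdot-)\vert+(\op{G} F)^{-}\in L^1(\diff G)$ under (b), hence $F^{\uparrow}(\cdot-)\in L^1(\diff G)$ by the Tonelli step; then $F^{\downarrow}(\cdot-)=F^{\uparrow}(\cdot-)-(F(\cdot-)-F(0))\in L^1(\diff G)$ by \eqref{eqn:ACloc:decompostion:1} and $F(\cdot-)\in L^1(\diff G)$, so $\vert F\vert(\cdot-)=F^{\uparrow}(\cdot-)+F^{\downarrow}(\cdot-)\in L^1(\diff G)$, which is (e). The implication (c)~$\Rightarrow$~(e) is symmetric, starting from $(\frac{\diff F}{\diff G})^{-}\barG=(F(\cdot-)-\op{G} F)^{-}\le\vert F(\cdot-)\vert+(\op{G} F)^{+}$. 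Finally, for (e)~$\Rightarrow$~(a): by (d)~$\Leftrightarrow$~(e) we have $\frac{\diff F}{\diff G}\,\barG\in L^1(\diff G)$; from $\vert F(t-)\vert\le\vert F(0)\vert+\vert F\vert(t-)$ and (e) we get $F(\cdot-)\in L^1(\diff G)$; hence $\op{G} F=F(\cdot-)-\frac{\diff F}{\diff G}\,\barG\in L^1(\diff G)$.

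The main obstacle, such as it is, is the Tonelli step: one has to check carefully that the left limit $\Psi(t-)$ really equals $\int_{(0,t)}\psi\dd G$ and that the inner mass $\int_{(v,t_G)}\dd G$ collapses to $\barG(v)$. This last collapse is exactly where $\Delta G(t_G)=0$ enters and is the reason the characterisation takes a genuinely different form when $\Delta G(t_G)>0$ (cf.\ Lemma~\ref{lem:op:integrability:DeltaG>0}). Everything downstream of the identity is just splitting into positive and negative parts and invoking \eqref{eqn:ACloc:decompostion:1}.
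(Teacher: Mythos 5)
Your proof is correct, and it reaches the conclusion by a genuinely different route for the only non-trivial implications. The paper proves ``(b) $\Rightarrow$ (a)'' (and symmetrically ``(c) $\Rightarrow$ (a)'') by contradiction: if $\op{G}F\notin L^1(\diff G)$ while $(\op{G}F)^-\in L^1(\diff G)$, Lemma \ref{lem:op:integrability} forces $\lim_{t\uparrow\uparrow t_G}F(t)\barG(t)=-\infty$, and a direct estimate of $\int_{(t,t_G)}\frac{|F(v-)\barG(v-)|}{\barG(v-)}\dd G(v)$ (where $\Delta G(t_G)=0$ makes $\int_{(t,t_G)}\dd G=1-G(t)$) then contradicts $F(\cdot-)\in L^1(\diff G)$; afterwards ``(a) $\Rightarrow$ (d)'' is read off the definition of $\op{G}F$, and ``(d) $\Leftrightarrow$ (e)'' and ``(e) $\Rightarrow$ (a)'' are exactly your Fubini identity and your last step. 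You instead bypass Lemma \ref{lem:op:integrability} altogether: you route (b) and (c) through (e) by splitting the density into $\left(\frac{\diff F}{\diff G}\right)^{\pm}$, bounding $\left(\frac{\diff F}{\diff G}\right)^{+}\barG=(F(\cdot-)-\op{G}F)^{+}\le|F(\cdot-)|+(\op{G}F)^-$, and applying the same Tonelli identity a second time to the primitives $F^{\uparrow},F^{\downarrow}$ of Section \ref{sec:decomposition}, closing the cycle with (e) $\Rightarrow$ (a). Both arguments are sound and of comparable length; the paper's buys a proof that needs no decomposition machinery and exhibits the blow-up of $F\barG$ as the obstruction (tying the lemma to Lemma \ref{lem:op:integrability} and the notion of change in mass), while yours is constructive rather than by contradiction, organises all five equivalences around the single identity $\int_{(0,t_G)}\Psi(t-)\dd G(t)=\int_{(0,t_G)}\psi(v)\barG(v)\dd G(v)$, and yields as a by-product that $F^{\uparrow}(\cdot-)$ and $F^{\downarrow}(\cdot-)$ are $\diff G$-integrable, in the spirit of Lemma \ref{lem:op:decomposition}. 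Your use of $\barG>0$ on $(0,t_G)$ to commute $(\cdot)^{\pm}$ with multiplication by $\barG$, and of $\Delta G(t_G)=0$ both to concentrate $\diff G$ on $(0,t_G)$ and to get $\int_{(v,t_G)}\dd G=\barG(v)$, is exactly where the hypotheses enter, as you note.
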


\begin{proof} ``(a) $\Rightarrow$ (b), (c)'': This is trivial.

``(b) $\Rightarrow$ (a)'': If $\op{G} F \not\in L^1(\diff G)$, then Lemma \ref{lem:op:integrability} implies $\lim_{t \uparrow\uparrow t_G} F(t)\barG(t) = -\infty$. Choose ${N \in \NN}$ large enough that $\int_{(0, t_G)} |F(v-)| \dd G(v) < N$, and $t \in [0, t_G)$ such that ${|F(v)\barG(v)| \geq N}$ for all $v \in [t, t_G)$.
Using $\Delta G(t_G) = 0$, this gives
\begin{align*}
\int_{(0, t_G)} |F(v-)| \dd G(v) 
&\geq \int_{(t, t_G)} \frac{|F(v-)\barG(v-)|}{\barG(v-)} \dd G(v) \\
&\geq N \int_{(t, t_G)} \frac{1}{\barG(t)} \dd G(v)
= N \frac{1 - G(t)}{\barG(t)} 
> \int_{(0, t_G)} |F(v-)| \dd G(v),
\end{align*}
which is a contradiction.

``(c) $\Rightarrow$ (a)'': This is analogous to the proof of ``(b) $\Rightarrow$ (a)''. 

``(a) $\Rightarrow$ (d)'': This follows immediately from the definition of $\op{G} F$.

``(d) $\Leftrightarrow$ (e)'': Using the definition of $\vert F \vert$, Fubini's theorem and $\Delta G(t_G) = 0$, we obtain
\begin{align*}
\int_{(0, t_G)} \vert F \vert(s-) \dd G(s)
&= \int_{(0, t_G)} \int_{(0, s)} \left| \frac{\diff F}{\diff G}(v) \right| \dd G(v) \dd G(s)
= \int_{(0, t_G)} \left| \frac{\diff F}{\diff G}(v) \right| \int_{(v, t_G)} \dd G(s) \dd G(v)\\
&= \int_{(0, t_G)} \left| \frac{\diff F}{\diff G}(v) \right| (1 - G(v)) \dd G(v).
\end{align*}
This immediately establishes both directions.

``(e) $\Rightarrow$ (a)'': On the one hand, (e) implies $F(\cdot-)\in L^1(\diff G)$ as $|F(v)| \leq \vert F(0) \vert + \vert F \vert(v)$ for $v \in [0, t_G)$, and on the other hand, (e) implies (d). Now the claim follows from the definition of $\op{G} F$.
\end{proof}

\begin{remark}
$F(\cdot-)\in L^1(\diff G)$ alone does not imply $\left(\op{G} F \right)^\pm \in L^1(\diff G)$. Indeed, let ${G: \RR \to [0, 1]}$ be given by $G(t) = t \1_{[0, 1)}(t) + \1_{[1, \infty)}(t)$, i.e., $\diff G$ is a uniform distribution on $(0, 1)$ with $t_G = 1$, and define $F:[0, \infty) \to \RR$ by $F(t) = \1_{[0, 1)}(t)\sin\frac{1}{1- t}$. Then an easy exercise in analysis shows
\begin{align*}
\int_{(0,1)} |F(v-)| \dd G(v)
&= \int_0^1 |F(v)| \dd v
< \infty, \\
\int_{(0,1)} \left(\op{G} F(v)\right)^\pm \dd G(v)
&= \int_0^1 \left( \sin\frac{1}{1 - v} - \frac{1}{1-v}\cos\frac{1}{1 - v}\right)^\pm \dd v
= \infty.
\end{align*}
\end{remark}

\section{Classification of single jump local martingales}
\label{sec:classification}

Let $(\Omega, \cF, P)$ be a probability space and $\gamma$ a fixed $(0, \infty)$-valued random variable with distribution function $G$. The filtration $\FF^\gamma = (\cF^\gamma_t)_{t\in[0,\infty]}$ given by
\begin{align}
\label{eqn:filtration}
\cF^\gamma_t
&= \sigma(\lbrace \gamma \leq s \rbrace : s \in (0,t])
\end{align}
is the smallest filtration with respect to which $\gamma$ is a stopping time. For any $F \lac G$, define the function $\zeta^F: [0,\infty] \times (0,\infty) \to \RR$ by
\begin{align}
\label{eqn:pf}
\zeta^F(t, v)
&= F( t) \1_{\lbrace t < v \rbrace } + \op{G} F(v) \1_{\lbrace t \geq v \rbrace },
\end{align}
where $\op{G}F:(0,\infty)\to\RR$ is defined by
\begin{align*}
\op{G} F(v)
&=
\begin{cases}
F(v-) - \frac{\diff F}{\diff G}(v) \barG(v), \quad &v \in (0, t_G),\\
F(t_G-) \1_{\lbrace \Delta G(t_G) > 0\rbrace }, &v \geq t_G, \text{ if } \lim_{t \uparrow \uparrow t_G} F(t) \text{ exists in }\RR, \\
0, &v \geq t_G, \text{ if } \lim_{t \uparrow \uparrow t_G} F(t) \text{ does not exist in }\RR;
\end{cases}
\end{align*}
cf.~Lemma \ref{lem:op}. Note that $\zeta^F$ is Borel-measurable and for each $t \in [0,\infty]$, $\zeta^F(t, \cdot)$ is unique up to $\diff G$-nullsets (because the local density $\frac{\diff F}{\diff G}$ is only $\diff G$-a.e.~unique on $(0,t_G)$). Now define the process $\mart{G}{F} = (\mart[t]{G}{F})_{t\in[0,\infty]}$ by
\begin{align}
\label{eqn:MGF}
\mart[t]{G}{F}
&= \zeta^F(t, \gamma)
= F(t) \1_{\lbrace t < \gamma \rbrace } + \op{G}F(\gamma) \1_{\lbrace t \geq \gamma \rbrace }.
\end{align}
$\mart{G}{F}$ is clearly $\FF^\gamma$-adapted and it is easy to see that modifying $\op{G} F$ on a $\diff G$-nullset leads to a process that is indistinguishable from the original process $\mart{G}{F}$. Every trajectory $\mart[\cdot]{G}{F} (\omega)$ is càdlàg and of finite variation on $[0, t_G)$, nonrandom until just before the random time $\gamma(\omega)$, and stays constant at $\op{G}F(\gamma(\omega))$ from time $\gamma(\omega)$ on. In particular,
\begin{align*}
\mart[t_G]{G}{F}
&= \op{G}F(\gamma) \;\; \as{P}
\end{align*}

The first line in the definition of $\op{G}F$ is chosen such that $\mart{G}F$ becomes an $\FF^\gamma$-martingale on the \emph{right-open} interval $[0,t_G)$. This result is well known in the literature (see e.g.~\cite{Dellacherie1970}). For the convenience of the reader, we provide a full proof here. In the following Sections \ref{sec:local martingales}--\ref{sec:H1 martingales}, we then classify the (local) martingale properties of $\mart{G}{F}$ when considered on the \emph{closed} interval $[0,t_G]$ (or, equivalently, on $[0,\infty]$).

\begin{lemma}
\label{lem:martingale}
The process $\mart{G}{F}$ is an $\FF^\gamma$-martingale on $[0, t_G)$.
\end{lemma}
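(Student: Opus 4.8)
The plan is to show directly that $\mart{G}{F}$ is integrable and satisfies the martingale property on $[0,t_G)$ with respect to $\FF^\gamma$. First I would compute, for a fixed $t \in [0, t_G)$, the expectation $\EX{\mart[t]{G}{F}}$ using the explicit form \eqref{eqn:MGF}: on $\{t < \gamma\}$ the process equals the constant $F(t)$, which contributes $F(t)\barG(t)$, and on $\{t \geq \gamma\}$ it equals $\op{G}F(\gamma)$, which contributes $\int_{(0,t]} \op{G}F(v) \dd G(v)$. By the integral formula \eqref{eqn:lem:op:integral formula} of Lemma \ref{lem:op} (applied with $a=0$), this last integral equals $F(0) - F(t)\barG(t)$, so the two contributions add up to $F(0) = \mart[0]{G}{F}$. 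The same computation with $t$ replaced by $0$ confirms integrability (all terms are finite since $\op{G}F \in L^1_\loc(\diff G)$ and $F$ is bounded on $[0,t]$ by Lemma \ref{lem:ACloc:local density}), so $\mart[t]{G}{F} \in L^1(P)$ for every $t < t_G$.

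Next I would establish the martingale property. Fix $0 \leq s < t < t_G$. The key structural observation is that on the event $\{\gamma \leq s\}$ the random variable $\mart[t]{G}{F} = \op{G}F(\gamma) = \mart[s]{G}{F}$ is already $\cF^\gamma_s$-measurable, so there is nothing to check there; the content is on $\{\gamma > s\}$. I would describe $\cF^\gamma_s$ concretely: it is generated by the sets $\{\gamma \leq u\}$ for $u \in (0,s]$, so on $\{\gamma > s\}$ the conditional distribution of $\gamma$ is just $\dd G$ restricted to $(s,\infty)$ and renormalised by $\barG(s)$ (this is the standard description of the filtration $\FF^\gamma$; cf.\ \cite{Dellacherie1970}). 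Concretely, it suffices to verify that for every $u \in (0,s]$,
\begin{align*}
\EX{\mart[t]{G}{F} \1_{\{\gamma > u\}}}
&= \EX{\mart[s]{G}{F} \1_{\{\gamma > u\}}},
\end{align*}
together with the trivial identity on the atom-like pieces $\{\gamma \leq u\}$; combined with the fact that such sets (together with $\Omega$) generate $\cF^\gamma_s$, this yields $\cEX{\mart[t]{G}{F}}{\cF^\gamma_s} = \mart[s]{G}{F}$. Each side of the displayed identity is computed exactly as in the first paragraph, splitting according to whether $\gamma$ falls in $(u,s]$, in $(s,t]$, or in $(t,\infty)$, and invoking \eqref{eqn:lem:op:integral formula} on the relevant subintervals; the telescoping of the boundary terms $[-F(v)\barG(v)]$ makes both sides equal to $F(u)\barG(u) - F(s)\barG(s) + F(s)\barG(s) = F(u)\barG(u)$... more precisely one checks both reduce to the same expression, and equality follows.

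The main obstacle, such as it is, is bookkeeping rather than conceptual difficulty: one must be careful that all the integrals that appear are over subintervals of $(0,t]$ with $t < t_G$, so that Lemma \ref{lem:op} applies and no issue arises from a possible point mass of $G$ at $t_G$ or from the (possibly nonexistent) limit $\lim_{v\uparrow\uparrow t_G} F(v)$ — this is exactly why the statement is restricted to the right-open interval $[0,t_G)$. A second mild subtlety is the precise identification of a generating $\pi$-system for $\cF^\gamma_s$ and the verification that checking the conditional-expectation identity against that $\pi$-system suffices; this is routine but should be stated. Once these points are in place, the martingale identity is an immediate consequence of the integration-by-parts formula \eqref{eqn:lem:op:integral formula} already proved in Lemma \ref{lem:op}, so the proof is short.
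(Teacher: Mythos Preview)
Your proposal is correct and follows essentially the same route as the paper: integrability from local $\diff G$-integrability of $\op{G}F$, and the martingale property from the integration-by-parts identity \eqref{eqn:lem:op:integral formula}. The only difference is cosmetic: the paper exploits that $\{\gamma > s\}$ is an \emph{atom} of $\cF^\gamma_s$, so the conditional expectation on that event reduces immediately to an ordinary conditional probability $\cEX{M_t}{s<\gamma}$, whereas you verify the identity against the generating $\pi$-system $\{\{\gamma > u\}:u\in[0,s]\}$; both devices lead to the same telescoping computation $F(t)\barG(t)+[-F(v)\barG(v)]_u^t = F(u)\barG(u)$.
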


\begin{proof}
For brevity, we set $M := \mart{G}{F}$. To check integrability, fix $0 \leq t < t_G$. Then the definition of $M$ and Lemma \ref{lem:op} give
\begin{align*}
\EX{\left\vert M_t\right\vert}
&\leq |F(t)| P[t < \gamma] + \EX{\left\vert\op{G} F(\gamma)\right\vert \1_{\lbrace t \geq \gamma\rbrace }} \\
&= |F(t)| (1-G(t)) + \int_{(0,t]} \left|\op{G} F(s)\right| \dd G(s) < \infty.
\end{align*}
To check the martingale property for $M$, fix $0 \leq s < t < t_G$. Then $t \geq \gamma$ on $\lbrace s \geq \gamma \rbrace $ gives
\begin{align*}
\cEX{M_t}{\cF^\gamma_s}
&= \cEX{M_t \1_{\lbrace s < \gamma \rbrace } + \op{G} F(\gamma) \1_{\lbrace s \geq \gamma\rbrace }}{\cF^\gamma_s} \\
&= \cEX{M_t \1_{\lbrace s < \gamma \rbrace }}{\cF^\gamma_s} + \op{G} F(\gamma) \1_{\lbrace s \geq \gamma\rbrace } \;\; \as{P}
\end{align*}
It is not hard to show that $\lbrace \gamma > s\rbrace $ is an atom of $\cF^\gamma_s$ (see e.g.~\cite{Dellacherie1970}, \cite{ChouMeyer1975} or \cite[Chapter IV, No.~104]{DellacherieMeyer1978}). Using this and \eqref{eqn:lem:op:integral formula} gives
\begin{align*}
\cEX{M_t \1_{\lbrace s < \gamma \rbrace }}{\cF^\gamma_s}
&= \cEX{M_t}{s < \gamma} \1_{\lbrace s < \gamma\rbrace } \\
&=\cEX{F(t) \1_{\lbrace t < \gamma \rbrace } + \op{G} F(\gamma) \1_{\lbrace s < \gamma \leq t\rbrace }}{s < \gamma} \1_{\lbrace s < \gamma\rbrace }\\
&=\frac{F(t)(1- G(t)) + \int_{(s, t]} \op{G} F(u) \dd G(u)}{1- G(s)} \1_{\lbrace s < \gamma\rbrace } \\
&=\frac{F(t)\barG(t) + \Big[-F(u)\barG(u)\Big]_s^t}{\barG(s)} \1_{\lbrace s < \gamma\rbrace } \\
&=\frac{F(s)\barG(s)}{\barG(s)} \1_{\lbrace s < \gamma\rbrace } = F(s) \1_{\lbrace s < \gamma\rbrace } \;\; \as{P}
\end{align*}
Thus, we may conclude that $\cEX{M_t}{\cF^\gamma_s} = F(s) \1_{\lbrace s < \gamma\rbrace } + \op{G} F(\gamma) \1_{\lbrace s \geq \gamma\rbrace } = M_s$ $\as{P}$
\end{proof}

We are now in a position to explain the structure of the function $\op{G}F$. On the one hand, if $\Delta G(t_G)=0$, then $\gamma < t_G$ $\as{P}$ and only the first line in the definition of $\op{G}F$ is relevant for $\mart{G}{F}$. On $(0,t_G)$, $\op{G}F$ is chosen such that $\mart{G}{F}$ becomes a martingale on the right-open interval $[0,t_G)$. On the other hand, if $\Delta G(t_G) > 0$, then $\gamma = t_G$ with positive probability. Assuming for the moment that $\mart{G}{F}$ is a martingale on $[0,t_G]$, the martingale convergence theorem implies that $\mart[t_G]{G}{F}=\lim_{t\uparrow\uparrow t_G} \mart[t]{G}{F}$ $\as{P}$ Evaluating the right-hand side on the event ${\lbrace \gamma = t_G \rbrace }$ yields $\mart[t_G]{G}{F} = \lim_{t \uparrow\uparrow t_G} F(t)=F(t_G-)$ on $\lbrace \gamma = t_G \rbrace $. This motivates the second line in the definition of $\op{G}F$. The last line is only relevant when $\Delta G(t_G) > 0$ and the left limit $F(t_G-)$ does not exist in $\RR$. But then $F$ must be of infinite variation on $[0,t_G]$ and
\begin{align*}
P[\mart[t]{G}{F}
&= F(t), t \in [0,t_G)]
\geq P[\gamma = t_G]
= \Delta G(t_G)
>0,
\end{align*}
so that $\mart{G}{F}$ fails to be a semimartingale on $[0,t_G]$ by Lemma \ref{lem:not semimartingale test}. Note that this is independent of the particular choice $\op{G}{F}(t_G):=0$.

\begin{remark}
\label{rem:credit risk}
Processes of the form $\mart{G}{F}$ for particular choices of $F$ play a special role in the modelling of credit risk, see e.g.~\cite{BieleckiRutkowski2002, JeanblancRutkowski2000, ElliottJeanblancYor2000}. We give two examples. We assume---as is usually done in the literature on credit risk---that $t_G = \infty$. First, for $F := \frac{1}{1 - G} = \frac{1}{\barG}$, we have ${\op{G} F= - \frac{\diff(F \barG)}{\diff G} = -\frac{\diff 1}{\diff G} = 0}$ and
\begin{align*}
\mart[t]{G}{F}
&= \frac{1}{1 - G(t)} \1_{\lbrace t < \gamma\rbrace } + 0 \cdot \1_{\lbrace t \geq \gamma\rbrace }
= \frac{1 - \1_{\lbrace t \geq \gamma\rbrace }}{1 - G(t)}, \quad t \in [0, \infty).
\end{align*}
This process is called $\hat M$ in \cite[Corollary 5.1]{JeanblancRutkowski2000}. Second, for
\begin{align*}
F(t)
&:= -\int_{(0, t]} \frac{\diff G(v)}{1-G(v-)}
= -\int_{(0, t]} \frac{\diff G(v)}{\barG(v-)}
\end{align*}
($=\log \barG(t)$ if $G$ is continuous), we have 
\begin{align*}
\op{G} F(v)
&= F(v-) - \frac{\diff F}{\diff G}(v)\barG(v)
= F(v) - \frac{\diff F}{\diff G}(v)\barG(v-)\\
&= F(v) + \frac{1}{\barG(v-)} \barG(v-)
= F(v) + 1 \;\; \diff G\text{-a.e.}
\end{align*}
and
\begin{align*}
\mart[t]{G}{F}
&= F(t) \1_{\lbrace t < \gamma\rbrace } + (F(\gamma) + 1)\1_{\lbrace t \geq \gamma\rbrace }
= \1_{\lbrace t \geq \gamma\rbrace } -\int_{(0, t\wedge \gamma]} \frac{\diff G(v)}{1- G(v-)}, \quad t \in [0, \infty).
\end{align*}
This process is called $M$ in \cite[Proposition 5.2]{JeanblancRutkowski2000}.

It is also often assumed that $G$ is absolutely continuous with respect to Lebesgue measure, i.e., $\diff G(t) = G'(t) \dd t$ for a nonnegative Borel-measurable function $G'$. In this case, the quantity $\kappa^G(t) := \frac{G'(t)}{\barG(t)}$ is the conditional probability density of the default time, given that default has not occurred up to time~$t$, and is often called \emph{hazard rate} or \emph{default intensity}. Clearly, any $F \lac G$ is also locally absolutely continuous with respect to Lebesgue measure, i.e., there is a Borel-measurable function $F'$ such that $\diff F(t) = F'(t) \dd t$. Now, $\mart{G}{F}$ has the following representation in terms of $F$, $F'$ and the hazard rate of $G$:
\begin{align*}
\mart[t]{G}{F}
&= F(t) \1_{\lbrace t<\gamma \rbrace } + \op{G} F(\gamma) \1_{\lbrace t \geq \gamma \rbrace }
= F(t) \1_{\lbrace t<\gamma \rbrace } + \left(F(\gamma-) - \frac{\diff F}{\diff G}(\gamma) \barG(\gamma) \right) \1_{\lbrace t \geq \gamma \rbrace },\\
&= F(t) \1_{\lbrace t<\gamma \rbrace } + \left(F(\gamma) - \frac{F'(\gamma)}{\kappa^G(\gamma)} \right) \1_{\lbrace t \geq \gamma \rbrace },\\
\end{align*}
or alternatively,
\begin{align*}
\mart[t]{G}{F}
&= F(t \wedge \gamma) - \frac{F'(\gamma)}{\kappa^G(\gamma)} \1_{\lbrace t \geq \gamma \rbrace }.
\end{align*}
\end{remark}

{\bfseries For the rest of this section (except for Section 3.4), we fix $F \lac G$ and set $M := \mart{G}{F}$ for brevity.}

The raw filtration generated by $M$, denoted by $\FF^M = (\cF^M_t)_{t\in[0,\infty]}$, is the smallest filtration such that $M$ is $\FF^M$-adapted. As $M$ is $\FF^\gamma$-adapted, $\FF^M$ is a subfiltration of $\FF^\gamma$. 

\begin{remark}
(a) While in the filtration $\FF^\gamma$, the value of $\gamma(\omega)$ is known at time $\gamma(\omega)$, this may not be true for the filtration $\FF^M$. In $\FF^M$, we can only tell the value of $\gamma(\omega)$ at time $\gamma(\omega)$ if we observe a jump of $M_\cdot(\omega)$ of a certain size at time $\gamma(\omega)$. However, if $\gamma(\omega) < t_G$ and $\frac{\diff F}{\diff G}(\gamma(\omega)) =0$, then $\op{G}F(\gamma(\omega))=F(\gamma(\omega)-)$ and $M_\cdot(\omega)$ has no jump at time $\gamma(\omega)$ (``$\gamma$ occurred, but we did not see it in the path of $M$''). A trivial example is given by $F \equiv 0$. Then $\op{G}F \equiv 0$, $M \equiv 0$, and $\FF^M$ contains no information about $\gamma$ at all.

(b) The filtrations $\FF^\gamma$ and $\FF^M$ need not be $P$-complete and $\FF^M$ need not be right-continuous in general ($\FF^\gamma$ is in fact right-continuous, see e.g.~\cite[Lemma II.3.24]{JacodShiryaev2003} or \cite[Chapter IV, No.~104]{DellacherieMeyer1978}). However, most of the results of martingale theory can be proved without these \emph{usual conditions}. In particular, the martingale convergence theorem and the convergence result for stochastic integrals stated in Lemma \ref{lem:not semimartingale test} do not rely on them.
\end{remark}

By the law of iterated expectations, if $M$ is an $\FF^\gamma$-martingale, then it is also an $\FF^M$-martingale. However, if $M$ is a local $\FF^\gamma$-martingale, then $M$ need not be a local $\FF^M$-martingale. The reason is that the $\FF^\gamma$-stopping times in the localising sequence need not be $\FF^M$-stopping times. To obtain stronger statements, we distinguish two filtrations in the definition of a local martingale. In particular, $M$ is called an \emph{$\FF^M$-local $\FF^\gamma$-martingale} if it is a local $\FF^\gamma$-martingale that admits a localising sequence consisting only of $\FF^M$-stopping times. We refer to Appendix \ref{sec:semimartingale theory} for the details and related (partly nonstandard) terminology for \hbox{(semi-)martingales}.

\subsection{Local martingale property on $[0, t_G]$}
\label{sec:local martingales}

The following preparatory lemma gives conditions for the integrability of $M$ on $[0,t_G]$.
\begin{lemma}
\label{lem:process:integrable}
The following are equivalent:
\begin{enumerate}
\item The process $M$ is integrable on $[0, t_G]$.
\item The random variable $M_{t_G}$ is integrable.
\item $\op{G} F \in L^1(\diff G)$.
\end{enumerate}
\end{lemma}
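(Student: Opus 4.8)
The plan is to prove the chain of equivalences by the scheme (b)~$\Rightarrow$~(c)~$\Rightarrow$~(a)~$\Rightarrow$~(b), since (a)~$\Rightarrow$~(b) is immediate (if the whole process is integrable, then in particular the single random variable $M_{t_G}$ is). The key observation throughout is that $M_{t_G} = \op{G}F(\gamma)$ $P$-a.s., so that integrability of $M_{t_G}$ is exactly the statement that $\op{G}F(\gamma)$ is integrable, and since $\gamma$ has distribution function $G$, the latter reads $\int_{(0,\infty)} |\op{G}F(v)| \dd G(v) < \infty$, i.e.\ $\op{G}F \in L^1(\diff G)$. Concretely, for ``(b)~$\Leftrightarrow$~(c)'' I would simply write
\begin{align*}
\EX{|M_{t_G}|}
&= \EX{|\op{G}F(\gamma)|}
= \int_{(0,\infty)} |\op{G}F(v)| \dd G(v),
\end{align*}
using that $\diff G$ is the law of $\gamma$; both sides are finite or infinite together, which gives both (b)~$\Rightarrow$~(c) and (c)~$\Rightarrow$~(b) at once.

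For ``(c)~$\Rightarrow$~(a)'' I need to bound $\EX{|M_t|}$ uniformly, or at least finitely, for every $t \in [0, t_G]$. For $t < t_G$ this is already done inside the proof of Lemma~\ref{lem:martingale}: there one has $\EX{|M_t|} \leq |F(t)|\barG(t) + \int_{(0,t]} |\op{G}F(s)| \dd G(s)$, and the second term is bounded by $\|\op{G}F\|_{L^1(\diff G)} < \infty$ under (c). The first term $|F(t)|\barG(t)$ is finite for each fixed $t < t_G$ since $F$ is càdlàg (hence locally bounded) on $[0,t_G)$ by Lemma~\ref{lem:ACloc:local density}; and in fact Lemma~\ref{lem:op:integrability} tells us that under (c) the limit $\lim_{t\uparrow\uparrow t_G} F(t)\barG(t)$ exists in $\RR$, so $t \mapsto |F(t)|\barG(t)$ is even bounded on $[0,t_G)$, giving genuine uniform integrability bounds. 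The remaining point is $t = t_G$, which is precisely $\EX{|M_{t_G}|} = \|\op{G}F\|_{L^1(\diff G)} < \infty$ by the computation above. Hence $\sup_{t\in[0,t_G]} \EX{|M_t|} < \infty$, so $M$ is integrable on $[0,t_G]$, establishing (a).

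I do not expect a serious obstacle here; the lemma is essentially a bookkeeping consequence of the identity $M_{t_G} = \op{G}F(\gamma)$ together with the transfer of integrability between $\EX{|\op{G}F(\gamma)|}$ and $\int |\op{G}F| \dd G$. The only mild subtlety worth a sentence is making sure the case $t < t_G$ is not overlooked: one must note that $|F(t)|\barG(t) < \infty$ for each fixed $t < t_G$ (finiteness of $F(t)$, which holds since $F$ is real-valued and càdlàg on $[0,t_G)$), rather than silently assuming it. I would also remark that when $t_G < \infty$ and $\Delta G(t_G) > 0$, $\op{G}F(t_G) = F(t_G-)\1_{\{\Delta G(t_G)>0\}}$ may be nonzero and contributes the point mass $\Delta G(t_G)\,|F(t_G-)|$ to $\|\op{G}F\|_{L^1(\diff G)}$; by Lemma~\ref{lem:op:integrability:DeltaG>0}, $\op{G}F \in L^1(\diff G)$ forces $F(t_G-)$ to exist in $\RR$ in that case, so everything is consistent and no separate argument is needed.
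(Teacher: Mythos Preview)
Your proposal is correct and follows essentially the same approach as the paper: (a)~$\Rightarrow$~(b) is trivial, (b)~$\Leftrightarrow$~(c) comes from $M_{t_G}=\op{G}F(\gamma)$ and the fact that $\gamma$ has law $\diff G$, and for (b)/(c)~$\Rightarrow$~(a) one simply notes that $M_t$ is already integrable for $t<t_G$ by Lemma~\ref{lem:martingale}. Your additional remarks on uniform bounds via Lemma~\ref{lem:op:integrability} and on the point mass at $t_G$ are correct but not needed, since ``integrable on $[0,t_G]$'' in this paper means $\EX{|M_t|}<\infty$ for each $t$, not a uniform bound.
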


\begin{proof}
``(a) $\Rightarrow$ (b)'' is trivial, and ``(b) $\Rightarrow$ (a)'' holds because $M_t$ is integrable for $t\in[0,t_G)$ by Lemma \ref{lem:martingale}. \hbox{``(b) $\Leftrightarrow$ (c)''} follows from $M_{t_G} = \op{G}F(\gamma)$ $\as{P}$ and the fact that $\gamma$ has distribution function $G$ under $P$.
\end{proof}

\begin{theorem}
\label{thm:local martingale}
\mbox{}
\begin{enumerate}
\item If $\Delta G(t_G) = 0$, then $M$ is an $\FF^M$-local $\FF^\gamma$-martingale on $[0, t_G]$.
\item If $\Delta G(t_G) > 0$ and $M_{t_G}$ is {\bf not} integrable, then $M$ {\bf fails} to be a semimartingale on $[0, t_G]$.
\item If $\Delta G(t_G) > 0$ and $M_{t_G}$ is integrable, then $M$ is an $H^1$-$\FF^\gamma$-martingale on $[0, t_G]$.
\end{enumerate}
\end{theorem}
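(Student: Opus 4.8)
The plan is to treat the three cases separately, since they rest on genuinely different phenomena. For part (a), where $\Delta G(t_G) = 0$, the key point is that $\gamma < t_G$ $P$-a.s., so the ``problematic'' time $t_G$ is never actually reached by the jump. The natural localising sequence is $\tau_n := t_G \wedge \inf\{t : |M_t| \geq n\}$ or, more conveniently, a sequence of deterministic-type cutoffs adapted to $\FF^M$. Concretely I would fix an increasing sequence $t_n \uparrow\uparrow t_G$ and set $\tau_n$ to be the $\FF^M$-stopping time that equals $t_n$ on $\{\gamma > t_n\}$ and equals $\gamma$ on $\{\gamma \leq t_n\}$; one must check this is genuinely an $\FF^M$-stopping time (on $\{\gamma \le t_n\}$ we have observed the jump of $M$ — unless $\frac{\diff F}{\diff G}(\gamma) = 0$, in which case $M$ has not jumped but then $M$ is already constant past $\gamma$ and stopping is harmless, so one should define $\tau_n$ via the first time $M$ hits its eventual value, or just take $\tau_n = t_n$ outright if $F$ is bounded near $t_n$). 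Then $M^{\tau_n}$ is bounded (since $F$ is càdlàg hence bounded on $[0,t_n]$ and $\op{G}F$ is bounded on the relevant range), and it is an $\FF^\gamma$-martingale on $[0,t_G]$ because it agrees with the martingale $M$ of Lemma \ref{lem:martingale} on $[0,t_n]$ and is constant afterwards — one closes up at $t_G$ using that $M^{\tau_n}_{t_G} = M^{\tau_n}_{t_n-}$-type reasoning, or simply notes $M^{\tau_n}$ is a bounded $\FF^\gamma$-martingale on $[0,t_G)$ with an a.s.\ limit, hence extends. Finally $\tau_n \to t_G$ a.s.\ because $\gamma < t_G$ a.s.

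For part (b), where $\Delta G(t_G) > 0$ and $M_{t_G}$ is not integrable, the goal is to show $M$ is not even a semimartingale on $[0,t_G]$. By Lemma \ref{lem:process:integrable}, non-integrability of $M_{t_G}$ is equivalent to $\op{G}F \notin L^1(\diff G)$, and by Lemma \ref{lem:op:integrability:DeltaG>0} (in the case $\Delta G(t_G)>0$) this is equivalent to $F$ being of \emph{infinite} variation on $[0,t_G]$ and to $\frac{\diff F}{\diff G} \notin L^1(\diff G)$; in particular the left limit $F(t_G-)$ may fail to exist in $\RR$, or may exist but with $F$ still of infinite variation. The argument is the one already sketched in the discussion after Lemma \ref{lem:martingale}: on the event $\{\gamma = t_G\}$, which has probability $\Delta G(t_G) > 0$, the path of $M$ coincides with $F$ on all of $[0,t_G)$, so $M$ has infinite variation on $[0,t_G]$ with positive probability. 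Invoking the cited convergence/no-semimartingale criterion (Lemma \ref{lem:not semimartingale test}), a semimartingale on a compact interval must have paths that are, up to the stochastic-integral part, controllable — and an infinite-variation deterministic path with no compensating martingale structure cannot occur with positive probability. I would phrase this as: if $M$ were a semimartingale on $[0,t_G]$, then so would be $M$ conditioned on $\{\gamma = t_G\}$, but that is the deterministic function $F$ on $[0,t_G]$, which is of infinite variation and hence (being deterministic) not a semimartingale — contradiction.

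For part (c), where $\Delta G(t_G) > 0$ and $M_{t_G}$ is integrable, I must upgrade integrability all the way to $H^1$. By Lemma \ref{lem:process:integrable} we have $\op{G}F \in L^1(\diff G)$, and by Lemma \ref{lem:op:integrability:DeltaG>0} this forces $F$ to be of finite variation on the \emph{closed} interval $[0,t_G]$ and $\lim_{t\uparrow\uparrow t_G} F(t) = F(t_G-)$ to exist in $\RR$; moreover Lemma \ref{lem:op:decomposition} lets us split $F = F(0) + F^\uparrow - F^\downarrow$ with each piece locally absolutely continuous and $\op{G}$ of each piece in $L^1(\diff G)$. First I would establish the $\FF^\gamma$-martingale property on the \emph{closed} interval $[0,t_G]$: $M$ is an $\FF^\gamma$-martingale on $[0,t_G)$ by Lemma \ref{lem:martingale}, it is integrable on $[0,t_G]$ by hypothesis, and $M_{t_G} = \lim_{t\uparrow\uparrow t_G} M_t$ a.s.\ together with uniform integrability (which follows from $\op{G}F \in L^1(\diff G)$ dominating $M$ via $|M_t| \le \sup_{s\le t}|F(s)| + |\op{G}F(\gamma)|$, where $\sup|F|$ is now finite) closes the martingale at $t_G$. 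The $H^1$ bound then amounts to showing $E[\sup_{t\in[0,t_G]}|M_t|] < \infty$. Here I would bound $\sup_t |M_t| \le V + |\op{G}F(\gamma)|$ where $V := \sup_{t\in[0,t_G]}|F(t)|$ is a finite \emph{constant} (by finite variation of $F$ on the compact interval), and $E[|\op{G}F(\gamma)|] = \int |\op{G}F|\dd G < \infty$; hence $E[\sup_t|M_t|] \le V + \|\op{G}F\|_{L^1(\diff G)} < \infty$, which is exactly the $H^1$ condition.

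The main obstacle is the careful bookkeeping in part (a): verifying that the truncation times $\tau_n$ really are $\FF^M$-stopping times — not merely $\FF^\gamma$-stopping times — when $\frac{\diff F}{\diff G}$ vanishes on a set of positive $\diff G$-measure, so that jumps of $M$ are invisible, and simultaneously verifying that the stopped processes $M^{\tau_n}$ close up to genuine $\FF^\gamma$-martingales on the full closed interval $[0,t_G]$ rather than merely on $[0,t_G)$. I expect part (b) to be the shortest (it reduces cleanly to the deterministic infinite-variation obstruction via Lemma \ref{lem:not semimartingale test}), and part (c) to be routine once the finiteness of $\sup|F|$ from Lemma \ref{lem:op:integrability:DeltaG>0} is in hand.
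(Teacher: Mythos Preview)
Your treatment of parts (b) and (c) is essentially the paper's proof: for (b), $\op{G}F\notin L^1(\diff G)$ together with $\Delta G(t_G)>0$ forces $F$ to have infinite variation on $[0,t_G]$ (Lemma~\ref{lem:op:integrability:DeltaG>0}), and since $P[M_t=F(t),\,t\in[0,t_G)]\ge\Delta G(t_G)>0$, Lemma~\ref{lem:not semimartingale test} kills the semimartingale property; for (c), boundedness of $F$ on $[0,t_G)$ (from the existence of $F(t_G-)$) gives the $H^1$ bound $\EX{\sup_t|M_t|}\le C+\int|\op{G}F|\dd G<\infty$, and the martingale closes at $t_G$ by convergence. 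Good.

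Part (a), however, has a genuine gap. Your proposed sequence $\tau_n$, equal to $t_n$ on $\{\gamma>t_n\}$ and $\gamma$ on $\{\gamma\le t_n\}$, is simply $\tau_n=\gamma\wedge t_n$. It fails on two independent counts. First, as you yourself note, it need not be an $\FF^M$-stopping time when the jump of $M$ at $\gamma$ is invisible. Second --- and this you do not address --- it does \emph{not} localise in the sense of Definition~\ref{def:martingale}\,(f)(ii): on the closed interval $[0,t_G]$ one needs $P[\tau_n=t_G]\to 1$, but here $\tau_n\le t_n<t_G$ always, and in fact $\tau_n\to\gamma<t_G$ a.s., not $\tau_n\to t_G$. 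Your fallback ``just take $\tau_n=t_n$'' fails for the same reason: deterministic times strictly below $t_G$ never satisfy $P[\tau_n=t_G]\to1$.

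The paper resolves both issues with a single device. If $F$ is eventually constant on some $[t^*,t_G)$, then $M=M^{t^*}$ a.s.\ and there is nothing to localise. Otherwise one can pick $t_n\uparrow\uparrow t_G$ with $F(t_n)\neq F(t_{n-1})$ and set
\[
\tau_n:=t_n\,\1_{\{M_{t_n}\neq M_{t_{n-1}}\}}+t_G\,\1_{\{M_{t_n}=M_{t_{n-1}}\}}.
\]
This is manifestly an $\FF^M$-stopping time (it depends only on $M_{t_{n-1}},M_{t_n}$), and the choice of $t_n$ forces $\{M_{t_n}=M_{t_{n-1}}\}\subset\{\gamma\le t_n\}$, so that $\{\tau_n=t_G\}\subset\{\gamma\le t_n\}$. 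On the latter event $M$ is already constant from $t_n$ on, whence $M^{\tau_n}=M^{t_n}$ identically; and $P[\tau_n=t_G]\ge P[\gamma\le t_{n-1}]\to 1$ since $\Delta G(t_G)=0$. This two-valued stopping time, taking the value $t_G$ precisely when we can \emph{certify from the path of $M$} that the process has already stabilised, is the missing idea.
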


\begin{proof}
(a) We distinguish two cases for $F$. If there exists $t^* \in [0, t_G)$ such that $F(t) = F(t^*)$ for $t \in [t^*, t_G)$, then $\op{G}F(v) = F(v-) = F(t^*)$ for $\diff G$-a.e.~$v \in (t^*,t_G)$. Thus, $P$-a.e.~path of $M$ is constant on $[t^*, t_G]$. It follows that $M=M^{t^*}$ $\as{P}$, and so by Lemma \ref{lem:martingale}, $M$ is a uniformly integrable $\FF^\gamma$-martingale on $[0, t_G]$. If there is no such $t^*$, then there exists a strictly increasing sequence $(t_n)_{n \in \NN_0} \subset [0, t_G)$ such that
\begin{align*}
\lim_{n \to \infty} t_n
&= t_G \quad \text{and} \quad F(t_n) \neq F(t_{n-1}), \quad n \in \NN.
\end{align*}
For $n \in \NN$, define the random time $\tau_n: \Omega \to [0, t_G]$ by
\begin{align*}
\tau_n
&:= t_n \1_{\lbrace M_{t_n} - M_{t_{n-1}} \neq 0\rbrace } + t_G \1_{\lbrace M_{t_n} - M_{t_{n-1}}=0\rbrace }.
\end{align*}
Since $\lbrace M_{t_n} - M_{t_{n-1}} \neq 0\rbrace \in \cF^M_{t_n}$, $\tau_n$ is an $\FF^M$-stopping time, and
\begin{align*}
M_{t_n} - M_{t_{n-1}}
&= 
\begin{cases}
F(t_n) - F(t_{n-1}) \neq 0 &\text{if } \gamma > t_n, \\
\op{G} F(\gamma) - F(t_{n-1}) &\text{if } t_{n-1} < \gamma \leq t_n,\\
0 &\text{if } \gamma \leq t_{n-1},
\end{cases}
\end{align*}
so that
\begin{align*}
\lbrace \tau_n = t_G\rbrace = \lbrace M_{t_n} - M_{t_{n-1}}= 0\rbrace \subset \lbrace \gamma \leq t_n\rbrace \subset \lbrace M_{t_{n+1}} - M_{t_n}= 0\rbrace = \lbrace \tau_{n+1} = t_G\rbrace .
\end{align*}
This shows that the sequence $(\tau_n)_{n \in \NN}$ is increasing and satisfies
\begin{align}
\label{eqn:thm:local martingale:pf:30}
\lim_{n \to \infty} P[\tau_n = t_G]
&\geq \lim_{n \to \infty} P[\gamma \leq t_{n-1}]
= P[\gamma < t_G]
= 1;
\end{align}
here, we use the assumption $\Delta G(t_G) = 0$.
Moreover, for $n \in \NN$ and $s \in [0, t_G]$, it follows from the definition of $M$ that
\begin{align*}
M_s \1_{\lbrace \gamma \leq t_n\rbrace } &= F(s) \1_{\lbrace s < \gamma\rbrace } \1_{\lbrace \gamma \leq t_n\rbrace } + \op{G} F(\gamma) \1_{\lbrace s \geq \gamma \rbrace } \1_{\lbrace \gamma \leq t_n\rbrace } \\
&= F(t_n \wedge s) \1_{\lbrace t_n \wedge s < \gamma\rbrace } \1_{\lbrace \gamma \leq t_n\rbrace } + \op{G} F(\gamma) \1_{\lbrace \gamma \leq t_n \wedge s \rbrace } \1_{\lbrace \gamma \leq t_n\rbrace } \\
&= M_{t_n \wedge s} \1_{\lbrace \gamma \leq t_n\rbrace }.
\end{align*}
This together with $\lbrace \tau_n = t_G\rbrace \subset \lbrace \gamma \leq t_n\rbrace $ gives
\begin{align*}
M^{\tau_n}_s
&= 
M_s \1_{\lbrace \tau_n = t_G\rbrace } + M_{t_n \wedge s} \1_{\lbrace \tau_n = t_n\rbrace }
= M_{t_n \wedge s} \1_{\lbrace \gamma \leq t_n\rbrace } \1_{\lbrace \tau_n = t_G\rbrace } + M_{t_n \wedge s} \1_{\lbrace \tau_n = t_n\rbrace } \\
&= M_{t_n \wedge s} \1_{\lbrace \tau_n = t_G\rbrace } + M_{t_n \wedge s} \1_{\lbrace \tau_n = t_n\rbrace } 
= M^{t_n}_s.
\end{align*}
Now the claim follows from \eqref{eqn:thm:local martingale:pf:30} and the fact that by Lemma \ref{lem:martingale}, each $M^{t_n}$ is a uniformly integrable $\FF^\gamma$-martingale on $[0, t_G]$.

(b) Lemmas \ref{lem:process:integrable} and \ref{lem:op:integrability:DeltaG>0} show that $F$ is of infinite variation on $[0, t_G]$. Moreover, we have $P[M_t = F(t), t \in [0, t_G)] \geq P[\gamma = t_G] = \Delta G(t_G) > 0$. Now the claim follows from Lemma \ref{lem:not semimartingale test}.

(c) The assumption that $M_{t_G}$ is integrable together with Lemma \ref{lem:process:integrable} gives $\op{G}F \in L^1(\diff G)$. Since $\Delta G(t_G) > 0$, the limit $F(t_G-)=\lim_{t \uparrow \uparrow t_G} F(t)$ exists in $\RR$ by Lemma \ref{lem:op:integrability:DeltaG>0} and so there is $C > 0$ such that ${\sup_{s \in [0, t_G)} |F(s)| \leq C}$. Thus,
\begin{align}
\EX{\sup_{0 \leq s < t_G} \vert M_s \vert}
&= \EX{\sup_{0 \leq s < t_G} \left( |F(s)| \1_{\lbrace s < \gamma \rbrace } + \left|\op{G} F(\gamma )\right| \1_{\lbrace s \geq \gamma \rbrace } \right)}\notag \\
\label{eqn:thm:local martingale:pf:40}
&\leq C + \int_{(0, t_G]} \left| \op{G} F(v)\right| \dd G(v)
< \infty.
\end{align}
Moreover, using the definition of $\op{G}F$ and $\Delta G(t_G)>0$ in the third equality,
\begin{align*}
\lim_{t\uparrow\uparrow t_G} M_t
&= \lim_{t\uparrow\uparrow t_G} \left( F(t)\1_{\lbrace t<\gamma \rbrace } + \op{G}F(\gamma)\1_{\lbrace t \geq \gamma \rbrace } \right)
= F(t_G-)\1_{\lbrace \gamma = t_G \rbrace } + \op{G}F(\gamma)\1_{\lbrace \gamma < t_G \rbrace }\\
&= \op{G}F(\gamma)
= M_{t_G} \;\; \as{P}
\end{align*}
Since $M$ is an $\FF^\gamma$-martingale on $[0,t_G)$ by Lemma \ref{lem:martingale}, combining \eqref{eqn:thm:local martingale:pf:40} with the martingale convergence theorem shows that $M$ is an $\FF^\gamma$-martingale on the right-closed interval $[0,t_G]$.
\end{proof}

\subsection{Sub- and supermartingale property on $[0, t_G]$}
\label{sec:integrable local martingales}

\begin{definition}
\label{def:change in mass}
If $M$ is integrable on $[0, t_G]$, the \emph{change in mass of $M$} (on $[0, t_G]$) is defined as
\begin{align*}
\Delta \mu
&:= \EX{M_{t_G}} - \EX{M_0}
= \int_{(0, t_G]} \op{G} F(v) \dd G(v) - F(0).
\end{align*}
\end{definition}

If $M$ is integrable on $[0, t_G]$, it is a strict local martingale whenever $\Delta \mu \neq 0$. The following result gives a formula that allows to compute $\Delta \mu$ easily.

\begin{lemma}
\label{lem:mass}
Suppose that $M$ is integrable on $[0, t_G]$. Then 
\begin{align*}
\Delta \mu
&= -\lim_{t \uparrow\uparrow t_G} F(t)\barG(t) \1_{\lbrace \Delta G(t_G) = 0\rbrace },
\end{align*}
and for $0 \leq s < t_G$,
\begin{align}
\label{eqn:lem:mass}
\cEX{M_{t_G}}{\cF^\gamma_s} - M_s
&= \frac{\Delta \mu}{\barG(s)} \1_{\lbrace s < \gamma\rbrace } \;\; \as{P}
\end{align}
Moreover, $M$ is always an integrable $\FF^M$-local $\FF^\gamma$-martingale on $[0,t_G]$, and more precisely,
\begin{enumerate}
\item $M$ is an $\FF^\gamma$-submartingale and a strict local martingale on $[0, t_G]$ if and only if $\Delta \mu > 0$,
\item $M$ is an $\FF^\gamma$-martingale on $[0, t_G]$ if and only if $\Delta \mu = 0$,
\item $M$ is an $\FF^\gamma$-supermartingale and a strict local martingale on $[0, t_G]$ if and only if $\Delta \mu < 0$.
\end{enumerate}
\end{lemma}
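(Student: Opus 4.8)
The plan is to prove the three-way equivalence by first establishing the two displayed formulas — the one for $\Delta\mu$ and the conditional expectation formula \eqref{eqn:lem:mass} — and then reading off the sub-/super-/martingale trichotomy from \eqref{eqn:lem:mass} together with Theorem \ref{thm:local martingale}. That $M$ is an integrable $\FF^M$-local $\FF^\gamma$-martingale on $[0,t_G]$ follows at once: integrability is Lemma \ref{lem:process:integrable}, and the local martingale property comes from Theorem \ref{thm:local martingale}(a) when $\Delta G(t_G)=0$ and from Theorem \ref{thm:local martingale}(c) (an $H^1$-martingale, hence a fortiori a local martingale) when $\Delta G(t_G)>0$.

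\textbf{Step 1: the formula for $\Delta\mu$.} Starting from the defining expression $\Delta\mu = \int_{(0,t_G]}\op{G}F(v)\dd G(v) - F(0)$, I would split the integral over $(0,t_G)$ and the possible atom at $t_G$. On $(0,t_G)$, Lemma \ref{lem:op:integrability} (which applies since $\op{G}F\in L^1(\diff G)$ by Lemma \ref{lem:process:integrable}) gives $\int_{(0,t_G)}\op{G}F(v)\dd G(v) = F(0)\barG(0) - \lim_{t\uparrow\uparrow t_G}F(t)\barG(t)$, and $\barG(0)=1$. The atom at $t_G$ contributes $\op{G}F(t_G)\Delta G(t_G)$. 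If $\Delta G(t_G)=0$ this term vanishes and we get $\Delta\mu = -\lim_{t\uparrow\uparrow t_G}F(t)\barG(t)$, as claimed. If $\Delta G(t_G)>0$, then by Lemma \ref{lem:op:integrability:DeltaG>0} the limit $F(t_G-)$ exists in $\RR$, so by the definition \eqref{eqn:lem:op} we have $\op{G}F(t_G) = F(t_G-)$, while $\barG(t_G-)=\Delta G(t_G)$; hence $\op{G}F(t_G)\Delta G(t_G) = F(t_G-)\barG(t_G-) = \lim_{t\uparrow\uparrow t_G}F(t)\barG(t)$, which exactly cancels the $-\lim_{t\uparrow\uparrow t_G}F(t)\barG(t)$ from the open-interval part, leaving $\Delta\mu=0$. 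So in both cases $\Delta\mu = -\lim_{t\uparrow\uparrow t_G}F(t)\barG(t)\1_{\{\Delta G(t_G)=0\}}$.

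\textbf{Step 2: the conditional expectation formula.} Fix $0\le s<t_G$. On the atom $\{s\geq\gamma\}$ we have $M_s=\op{G}F(\gamma)=M_{t_G}$ $\as{P}$, so both sides of \eqref{eqn:lem:mass} are zero there. On the atom $\{s<\gamma\}$ of $\cF^\gamma_s$ (as used in the proof of Lemma \ref{lem:martingale}), I would compute $\cEX{M_{t_G}}{s<\gamma}$ directly: since $M_{t_G}=\op{G}F(\gamma)$ and $\gamma$ has distribution $G$, $\cEX{M_{t_G}}{s<\gamma} = \frac{1}{\barG(s)}\int_{(s,\infty)}\op{G}F(v)\dd G(v)$. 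Splitting this integral as in Step 1 and using Lemma \ref{lem:op:integrability} plus the atom computation, the numerator equals $F(s)\barG(s) + \Delta\mu$ (the $F(s)\barG(s)$ coming from evaluating \eqref{eqn:lem:op:integrability:open} at $a=s$ and the $\Delta\mu$ from the tail beyond $s$ being the same whether we start at $0$ or at $s$ — indeed $\int_{(s,\infty)}\op{G}F\dd G = F(s)\barG(s) - \lim_{t\uparrow\uparrow t_G}F(t)\barG(t) + \op{G}F(t_G)\Delta G(t_G)$, and $\Delta\mu = -\lim_{t\uparrow\uparrow t_G}F(t)\barG(t) + \op{G}F(t_G)\Delta G(t_G)$). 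Dividing by $\barG(s)$ and subtracting $M_s=F(s)$ on $\{s<\gamma\}$ yields $\frac{\Delta\mu}{\barG(s)}$, establishing \eqref{eqn:lem:mass}.

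\textbf{Step 3: the trichotomy.} From \eqref{eqn:lem:mass}, $\cEX{M_{t_G}}{\cF^\gamma_s}-M_s$ has the sign of $\Delta\mu$ on $\{s<\gamma\}$ (a set of positive probability for $s<t_G$) and is zero on $\{s\geq\gamma\}$; it also tells us $\EX{M_{t_G}}-\EX{M_s} = \Delta\mu\cdot\frac{\EX{\1_{\{s<\gamma\}}}}{\barG(s)} = \Delta\mu$, consistent with Definition \ref{def:change in mass}. If $\Delta\mu=0$, then $\cEX{M_{t_G}}{\cF^\gamma_s}=M_s$ for all $s<t_G$, and since $M$ is already an $\FF^\gamma$-martingale on $[0,t_G)$ by Lemma \ref{lem:martingale}, this extends the martingale property to the closed interval $[0,t_G]$; this gives (b). If $\Delta\mu>0$, the same computation shows $\cEX{M_{t_G}}{\cF^\gamma_s}\geq M_s$ with $\cEX{M_{t_G}}{\cF^\gamma_u}=M_u$ being a martingale for $u<t_G$, so $M$ is a submartingale on $[0,t_G]$; it is not a martingale since $\EX{M_{t_G}}\neq\EX{M_0}$, and being a local martingale, it is a strict one — this is (a). Case (c) is symmetric. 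For the converse directions one notes these three sign conditions on $\Delta\mu$ are mutually exclusive and exhaustive, so each implication reverses. The main obstacle is bookkeeping the atom at $t_G$ consistently in Steps 1 and 2 — making sure the $\op{G}F(t_G)\Delta G(t_G)$ term is handled correctly in the two cases $\Delta G(t_G)\lessgtr 0$ — but this is routine given Lemmas \ref{lem:op:integrability} and \ref{lem:op:integrability:DeltaG>0}.
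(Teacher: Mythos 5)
Your proposal is correct and follows essentially the same route as the paper's proof: it rests on \eqref{eqn:lem:op:integrability:open}, the fact that $\lbrace s<\gamma\rbrace$ is an atom of $\cF^\gamma_s$, the identity $M_{t_G}=\op{G}F(\gamma)$, and then reads off the trichotomy from \eqref{eqn:lem:mass} together with Lemma \ref{lem:martingale} and Theorem \ref{thm:local martingale}. The only difference is cosmetic: the paper disposes of the case $\Delta G(t_G)>0$ in one stroke via Theorem \ref{thm:local martingale} (c) (there $M$ is an $H^1$-$\FF^\gamma$-martingale, so $\Delta\mu=0$ and both sides of \eqref{eqn:lem:mass} vanish), whereas you carry the atom term $\op{G}F(t_G)\Delta G(t_G)$ through the computation (justified by Lemma \ref{lem:op:integrability:DeltaG>0}) and show it cancels --- both are fine.
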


\begin{proof}
If $\Delta G(t_G) > 0$, then $M$ is an $H^1$-$\FF^\gamma$-martingale on $[0, t_G]$ by Theorem \ref{thm:local martingale} (c), and all claims follow. So suppose that $\Delta G(t_G) = 0$. Then $M$ is an $\FF^M$-local $\FF^\gamma$-martingale on $[0,t_G]$ by Theorem \ref{thm:local martingale} (a). Moreover, using $\Delta G(t_G) = 0$ and \eqref{eqn:lem:op:integrability:open} gives
\begin{align*}
\Delta\mu
&=\EX{M_{t_G}} - \EX{M_0}
= \int_{(0, t_G]} \op{G} F(v) \dd G(v) - F(0)\\
&= \int_{(0, t_G)} \op{G} F(v) \dd G(v) - F(0)
= - \lim_{t \uparrow\uparrow t_G} F(t)\barG(t).
\end{align*}
To establish \eqref{eqn:lem:mass}, fix $0 \leq s < t_G $. Using $M_{t_G}=\op{G}F(\gamma)$ $\as{P}$, the fact that $\lbrace s < \gamma\rbrace $ is an atom of $\cF^\gamma_s$, $\Delta G(t_G)=0$ and \eqref{eqn:lem:op:integrability:open} gives
\begin{align*}
\cEX{M_{t_G}}{\cF^\gamma_s}
&= \cEX{\op{G} F(\gamma)}{s < \gamma} \1_{\lbrace s < \gamma\rbrace } + \op{G} F(\gamma) \1_{\lbrace s \geq \gamma\rbrace } \\
&= \frac{\int_{(s, t_G]} \op{G} F(u) \dd G(u)}{1- G(s)} \1_{\lbrace s < \gamma\rbrace } + \op{G} F(\gamma) \1_{\lbrace s \geq \gamma\rbrace } \\
&= \frac{\int_{(s, t_G)} \op{G} F(u) \dd G(u)}{1- G(s)} \1_{\lbrace s < \gamma\rbrace } + \op{G} F(\gamma) \1_{\lbrace s \geq \gamma\rbrace } \\
&= \frac{F(s)\barG(s)- \lim_{t \uparrow\uparrow t_G} F(t)\barG(t)}{\barG(s)} \1_{\lbrace s < \gamma\rbrace } + \op{G} F(\gamma) \1_{\lbrace s \geq \gamma\rbrace } \\
&=\frac{F(s)\barG(s) + \Delta \mu}{\barG(s)} \1_{\lbrace s < \gamma\rbrace } + \op{G} F(\gamma) \1_{\lbrace s \geq \gamma\rbrace } \\
&= M_s + \frac{\Delta \mu}{\barG(s)} \1_{\lbrace s < \gamma\rbrace } \;\; \as{P}
\end{align*}
The remaining claims are straightforward.
\end{proof}

The next result shows that if $M$ is integrable on $[0, t_G]$, then it can be naturally decomposed into its initial value $M_0$ and the difference of two supermartingales starting at $0$, i.e., it is a quasimartingale (cf.~\cite[Theorem VI.40]{DellacherieMeyer1982}).

\begin{corollary}
\label{cor:process:decomposition}
Let $M = \mart{G}{F}$ be integrable on $[0, t_G]$. Set $M^\uparrow := \mart{G}{(F^\uparrow)}$ and $M^\downarrow := \mart{G}{(F^\downarrow)}$. Then $M^\uparrow$ and $M^\downarrow$ are $\FF^\gamma$-supermartingales on $[0, t_G]$, start at $0$, and satisfy
\begin{align*}
M
&= M_0 + M^\uparrow - M^\downarrow \;\; \as{P}
\end{align*}
\end{corollary}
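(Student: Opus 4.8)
The plan is to reduce everything to the corresponding analytic decomposition of $F$ established in Section \ref{sec:decomposition}. Recall that $F \lac G$ with $\op{G} F \in L^1(\diff G)$ (the latter by Lemma \ref{lem:process:integrable}, since $M$ is integrable on $[0,t_G]$). By Lemma \ref{lem:op:decomposition} we then have $F^\uparrow, F^\downarrow \lac G$ and $\op{G}(F^\uparrow), \op{G}(F^\downarrow) \in L^1(\diff G)$, so $M^\uparrow := \mart{G}{(F^\uparrow)}$ and $M^\downarrow := \mart{G}{(F^\downarrow)}$ are well defined and, again by Lemma \ref{lem:process:integrable}, integrable on $[0,t_G]$.

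The first step is the pathwise identity $M = M_0 + M^\uparrow - M^\downarrow$ $\as{P}$ On $\lbrace t < \gamma \rbrace$ the value of $\mart[t]{G}{F}$ is $F(t)$, and since $\gamma > 0$ we may restrict attention to $t \in [0, t_G)$, where \eqref{eqn:ACloc:decompostion:1} gives $F(t) = F(0) + F^\uparrow(t) - F^\downarrow(t)$; note $F^\uparrow(0) = F^\downarrow(0) = 0$, so $M_0 = F(0)$, $M^\uparrow_0 = M^\downarrow_0 = 0$. On $\lbrace t \geq \gamma \rbrace$ the value is $\op{G}F(\gamma)$, and \eqref{eqn:lem:op:decomposition:1} of Lemma \ref{lem:op:decomposition} gives $\op{G}F = F(0) + \op{G}(F^\uparrow) - \op{G}(F^\downarrow)$ $\diff G$-a.e., i.e.\ $P$-a.s.\ when evaluated at $\gamma$. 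Combining the two cases and recalling that modifying $\op{G}(F^{\uparrow/\downarrow})$ on a $\diff G$-nullset does not change $M^{\uparrow/\downarrow}$ up to indistinguishability yields $M = M_0 + M^\uparrow - M^\downarrow$ $\as{P}$

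The second step is the supermartingale property of $M^\uparrow$ and $M^\downarrow$ on $[0,t_G]$. Since $F^\uparrow$ is nonnegative and increasing on $[0,t_G)$, we have $\liminf_{t\uparrow\uparrow t_G} F^\uparrow(t)\barG(t) \geq 0$, hence by Lemma \ref{lem:mass} the change in mass of $M^\uparrow$ is $\Delta\mu^\uparrow = -\lim_{t\uparrow\uparrow t_G} F^\uparrow(t)\barG(t)\1_{\lbrace\Delta G(t_G)=0\rbrace} \leq 0$; the same argument applies to $M^\downarrow$. By parts (b) and (c) of Lemma \ref{lem:mass}, $\Delta\mu^\uparrow \leq 0$ forces $M^\uparrow$ to be an $\FF^\gamma$-supermartingale on $[0,t_G]$ (a (true) martingale in the boundary case $\Delta\mu^\uparrow = 0$, which is still a supermartingale), and likewise for $M^\downarrow$. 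This completes the proof.

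I do not expect a genuine obstacle here: the corollary is essentially a bookkeeping consequence of Lemma \ref{lem:op:decomposition} (the analytic decomposition) and Lemma \ref{lem:mass} (the sign of the change in mass). The only point requiring a little care is the handling of $\diff G$-nullsets when passing from the $\diff G$-a.e.\ identity \eqref{eqn:lem:op:decomposition:1} to the $P$-a.s.\ identity for the processes, but this is exactly the indistinguishability remark already recorded after \eqref{eqn:MGF}, so no new argument is needed.
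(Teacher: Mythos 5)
Your proposal is correct and follows essentially the same route as the paper: well-definedness and integrability via Lemmas \ref{lem:op:decomposition} and \ref{lem:process:integrable}, the pathwise decomposition via \eqref{eqn:ACloc:decompostion:1} and \eqref{eqn:lem:op:decomposition:1}, and the supermartingale property from nonnegativity of $F^{\uparrow/\downarrow}$ combined with Lemma \ref{lem:mass}. You merely spell out the details (the sign of $\Delta\mu^{\uparrow/\downarrow}$ and the handling of $\diff G$-nullsets) that the paper's proof leaves implicit.
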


\begin{proof}
It follows from Lemmas \ref{lem:op:decomposition} and \ref{lem:process:integrable} that $M^\uparrow$ and $M^\downarrow$ are well defined, integrable on $[0, t_G]$ and start at $0$. Nonnegativity of $F^\uparrow, F^\downarrow$ and Lemma \ref{lem:mass} give the supermartingale property. The decomposition result follows from the definitions of $M, M^\uparrow$ and $M^\downarrow$, and from \eqref{eqn:ACloc:decompostion:1} and \eqref{eqn:lem:op:decomposition:1}.
\end{proof}

\subsection{$H^1$-martingale property on $[0, t_G]$}
\label{sec:H1 martingales}

If $M$ is integrable on $[0, t_G]$ and $\Delta G(t_G) > 0$, then $M$ is automatically an $H^1$-martingale on $[0, t_G]$ by Theorem \ref{thm:local martingale}. If $\Delta G(t_G) = 0$, however, the situation is more delicate.

\begin{lemma}
\label{lem:H1 martingale}
Suppose that $\Delta G(t_G) = 0$. Then the following are equivalent:
\begin{enumerate}
\item $M$ is an $H^1$-martingale on $[0, t_G]$ (in the sense that there exists a filtration $\FF=(\cF_t)_{t\in[0,\infty]}$ of $\cF$ such that $M$ is an $H^1$-$\FF$-martingale on $[0,t_G]$, cf.~Definition \ref{def:martingale}).
\item $M$ is an $H^1$-$\FF^\gamma$-martingale on $[0, t_G]$.
\item $F(\cdot-) \in L^1(\diff G)$ and $\op{G} F \in L^1(\diff G)$.
\end{enumerate}
\end{lemma}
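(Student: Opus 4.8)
The plan is to establish a cycle of implications, using the already-proven analytic results as much as possible. The implication (b) $\Rightarrow$ (a) is trivial since $\FF^\gamma$ is a particular filtration satisfying the requirement in (a). For (a) $\Rightarrow$ (c): if $M$ is an $H^1$-$\FF$-martingale on $[0,t_G]$ for some filtration $\FF$, then in particular $M$ is integrable on $[0,t_G]$, so by Lemma \ref{lem:process:integrable} we get $\op{G}F\in L^1(\diff G)$; for the other half, $H^1$ means $E[\sup_{0\le s\le t_G}|M_s|]<\infty$, and since $\sup_{0\le s< t_G}|M_s| \ge |F(s)|\1_{\{s<\gamma\}}$ for every $s$, one extracts along a suitable sequence $s_n\uparrow\uparrow t_G$ that $\sup_{s<t_G}|F(s)\1_{\{s<\gamma\}}|$ is integrable; computing its expectation (or that of a monotone rearrangement) against $\diff G$ should give $\int_{(0,t_G)}|F(v-)|\dd G(v)<\infty$, i.e.\ $F(\cdot-)\in L^1(\diff G)$. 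The cleanest route here is probably to note that $\{\sup_{s<t_G}|F(s)|\1_{\{s<\gamma\}} > \lambda\}$ relates to $\{\gamma > \text{(first time $|F|$ exceeds $\lambda$)}\}$ and integrate the tail, but one may also just bound $\sup_{0\le s<t_G}|M_s|\ge |F(\gamma-)|$ on a set of the form $\{v<\gamma\}$ and conclude via Fubini; I would work out whichever is shortest.

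The substantive implication is (c) $\Rightarrow$ (b). Here I would argue as in the proof of Theorem \ref{thm:local martingale}(c): the bound (c) gives, via Lemma \ref{lem:op:decomposition} (or directly), that
\begin{align*}
\EX{\sup_{0\le s< t_G}|M_s|}
&\le \EX{\sup_{0\le s<t_G}|F(s)|\1_{\{s<\gamma\}}} + \int_{(0,t_G]}\left|\op{G}F(v)\right|\dd G(v).
\end{align*}
The second term is finite by $\op{G}F\in L^1(\diff G)$. For the first term, one uses $F(\cdot-)\in L^1(\diff G)$: since $\sup_{s<t_G}|F(s)|\1_{\{s<\gamma\}} = \sup_{s<\gamma}|F(s)|$ and, conditionally on $\gamma$, this is a deterministic quantity depending only on $F$ restricted to $[0,\gamma)$, one bounds it by (essentially) $\sup_{s<\gamma}(|F(0)| + \vert F\vert(s-)) \le |F(0)| + \vert F\vert(\gamma-)$, and then Lemma \ref{lem:Fbar minus:integrability} (equivalence of (e) with (d), and (d) is implied by $\op{G}F\in L^1$ together with $F(\cdot-)\in L^1$, i.e.\ condition (a) there) gives $\vert F\vert(\cdot-)\in L^1(\diff G)$, hence $\EX{\vert F\vert(\gamma-)} = \int_{(0,t_G)}\vert F\vert(v-)\dd G(v) < \infty$. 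This shows $M\in H^1$. To upgrade from "$M$ has integrable supremum and is a local $\FF^\gamma$-martingale" to "$M$ is an $H^1$-$\FF^\gamma$-martingale on $[0,t_G]$", I would invoke Theorem \ref{thm:local martingale}(a) (which applies since $\Delta G(t_G)=0$) to get that $M$ is an $\FF^M$-local $\FF^\gamma$-martingale, then note that a local martingale with integrable supremum is a uniformly integrable martingale (dominated convergence along the localising sequence), and a UI martingale with integrable running supremum is by definition an $H^1$-martingale; the left limit $\lim_{t\uparrow\uparrow t_G}M_t = M_{t_G}$ $\as P$ is checked exactly as in the proof of Theorem \ref{thm:local martingale}(c) (on $\{\gamma<t_G\}$ the path is already constant near $t_G$, and $P[\gamma=t_G]=0$ since $\Delta G(t_G)=0$), so the martingale property extends to the closed interval $[0,t_G]$.

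The main obstacle I anticipate is the bookkeeping in (c) $\Rightarrow$ (b) around the pathwise supremum $\sup_{s<\gamma}|F(s)|$: $F$ need not be of finite variation or even càdlàg on the \emph{closed} interval $[0,t_G]$ (cf.\ Remark \ref{rem:ACloc}), so one cannot simply write $\sup_{s<t_G}|F(s)| = |F(t_G-)|$ or similar. The fix is to stay on $[0,\gamma)$ with $\gamma < t_G$ $\as P$, where $F$ \emph{is} càdlàg and of finite variation by Lemma \ref{lem:ACloc:local density}, so that $|F(s)| \le |F(0)| + \vert F\vert(s)$ holds for $s\in[0,\gamma)$ and the supremum is genuinely controlled by $\vert F\vert(\gamma-)$; combined with Lemma \ref{lem:Fbar minus:integrability}, this closes the estimate. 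The remaining steps are routine applications of the cited lemmas and standard martingale convergence.
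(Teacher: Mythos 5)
Your proposal is correct and follows essentially the same route as the paper: (b)$\Rightarrow$(a) trivially, (a)$\Rightarrow$(c) via Lemma \ref{lem:process:integrable} plus bounding the running supremum from below by $|F(\gamma-)|$ and integrating against the law of $\gamma$ (the paper phrases this through $\zeta^F$ and $\sup_{t<v}|F(t)|\geq|F(v-)|$), and (c)$\Rightarrow$(b) via Theorem \ref{thm:local martingale}(a), the pathwise bound $|F(t)|\leq|F(0)|+\vert F\vert(v-)$ for $t<v<t_G$, and Lemma \ref{lem:Fbar minus:integrability} to get $\vert F\vert(\cdot-)\in L^1(\diff G)$, which the paper uses implicitly at the same spot. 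Your extra care about the supremum only over $[0,\gamma)$ with $\gamma<t_G$ a.s.\ and about upgrading a local martingale with integrable supremum to a UI (hence $H^1$) martingale matches the paper's reasoning.
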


\begin{proof}
``(b) $\Rightarrow$ (a)'': This is trivial.

``(a) $\Rightarrow$ (c)'': Lemma \ref{lem:process:integrable} yields $\op{G}F \in L^1(\diff G)$. Moreover, using the definitions of $\zeta^F$ and $M$ in \eqref{eqn:pf} and \eqref{eqn:MGF} and the fact that $\Delta G(t_G) =0$, we obtain
\begin{align*}
\int_{(0,t_G)} \vert F(v-) \vert\dd G(v)
&\leq \int_{(0,t_G)}\sup_{t<v} \vert F(t) \vert \dd G(v)
= \int_{(0,t_G)}\sup_{t<v} \vert \zeta^F(t,v) \vert\dd G(v)\\
&\leq \int_{(0,t_G)} \sup_{t \in [0,t_G]} \vert \zeta^F(t,v) \vert \dd G(v)
= \EX{\sup_{t \in [0,t_G]} \vert M_t \vert}
< \infty.
\end{align*}

``(c) $\Rightarrow$ (b)'': As $M$ is a local $\FF^\gamma$-martingale on $[0,t_G]$ by Theorem \ref{thm:local martingale} (a), it suffices to show that 
$\EX{\sup_{t \in [0,t_G]} \vert M_t \vert} < \infty$. First, note that $\vert F (t) \vert \leq \vert F(0) \vert + \vert F \vert (t) \leq \vert F(0)\vert + \vert F \vert(v-)$ for $0 \leq t<v<t_G$. Thus, for $t \in [0,t_G]$ and $v \in (0,t_G)$,
\begin{align*}
\vert \zeta^F(t,v) \vert
&= \vert F(t) \vert \1_{\lbrace t<v \rbrace } + \vert \op{G}F(v) \vert \1_{\lbrace t \geq v \rbrace }
\leq \vert F(0) \vert + \vert F \vert(v-) + \vert \op{G}F(v) \vert.
\end{align*}
Using this together with the definition of $M$ in \eqref{eqn:MGF} and the fact that $\Delta G(t_G) = 0$, we get
\begin{align*}
\EX{\sup_{t \in [0,t_G]} \vert M_t \vert}
&= \int_{(0,t_G)} \sup_{t \in [0,t_G]} \vert \zeta^F(t,v) \vert \dd G(v) \\
&\leq \vert F(0) \vert + \int_{(0,t_G)} \vert F \vert(v-) \dd G(v) + \int_{(0,t_G)} \vert \op{G}F(v) \vert \dd G(v)
< \infty.\qedhere
\end{align*}
\end{proof}

As a corollary, we obtain a criterion which allows us to construct (uniformly integrable) martingales that are not $H^1$-martingales. A concrete example is given in Example \ref{ex:not H1 martingale} below.

\begin{corollary}
\label{cor:not H1 martingale}
Suppose that $\Delta G(t_G) = 0$. Assume that $ \left(\op{G} F \right)^-$ or $\left(\op{G} F\right)^+$ belongs to $L^1(\diff G)$ and that $\lim_{t \uparrow\uparrow t_G} F(t)\barG(t)=0$, but that $F(\cdot-) \not\in L^1(\diff G)$. Then $M$ is an $\FF^\gamma$-martingale but {\bf not} an $H^1$-martingale on $[0, t_G]$.
\end{corollary}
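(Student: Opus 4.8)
The plan is to derive Corollary \ref{cor:not H1 martingale} directly from Lemma \ref{lem:H1 martingale} and Lemma \ref{lem:mass}, using Lemma \ref{lem:op:integrability} as the bridge. First I would verify the hypotheses of Lemma \ref{lem:op:integrability}: we are given that $(\op{G}F)^-$ or $(\op{G}F)^+$ is in $L^1(\diff G)$, and we are given that $\lim_{t\uparrow\uparrow t_G} F(t)\barG(t)$ exists and equals $0$; in particular $\limsup$ and $\liminf$ of $F(t)\barG(t)$ are both finite. Hence condition (b) (if $(\op{G}F)^- \in L^1(\diff G)$) or condition (c) (if $(\op{G}F)^+ \in L^1(\diff G)$) of Lemma \ref{lem:op:integrability} is satisfied, so $\op{G}F \in L^1(\diff G)$. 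By Lemma \ref{lem:process:integrable}, $M$ is therefore integrable on $[0,t_G]$.

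Next I would invoke Lemma \ref{lem:mass}: since $\Delta G(t_G) = 0$ and $M$ is integrable on $[0,t_G]$, the change in mass is $\Delta\mu = -\lim_{t\uparrow\uparrow t_G} F(t)\barG(t) = 0$ by hypothesis. By part (b) of Lemma \ref{lem:mass}, $\Delta\mu = 0$ is exactly the condition for $M$ to be an $\FF^\gamma$-martingale on $[0,t_G]$. This establishes the first half of the claim.

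For the second half, I would use the equivalence ``(a) $\Leftrightarrow$ (c)'' in Lemma \ref{lem:H1 martingale}. We have just shown $\op{G}F \in L^1(\diff G)$, but by assumption $F(\cdot-) \notin L^1(\diff G)$, so condition (c) of Lemma \ref{lem:H1 martingale} fails. Hence condition (a) fails, i.e., there is no filtration $\FF$ making $M$ an $H^1$-$\FF$-martingale on $[0,t_G]$; in particular $M$ is not an $H^1$-martingale in any sense. Combining the two halves gives the corollary.

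There is essentially no obstacle here: the corollary is a straightforward bookkeeping combination of the three preceding lemmas, and the only mild subtlety is making sure the hypothesis ``$\lim_{t\uparrow\uparrow t_G} F(t)\barG(t) = 0$'' is correctly fed into Lemma \ref{lem:op:integrability} to conclude full $\diff G$-integrability of $\op{G}F$ (rather than merely one-sided integrability), and then noting that the hypothesis $F(\cdot-)\notin L^1(\diff G)$ is precisely what breaks condition (c) of Lemma \ref{lem:H1 martingale} while leaving the plain martingale property intact. The proof is therefore short: one sentence to get $\op{G}F\in L^1(\diff G)$, one to get $\Delta\mu=0$ and hence the martingale property via Lemma \ref{lem:mass}(b), and one to get the failure of the $H^1$ property via Lemma \ref{lem:H1 martingale}.
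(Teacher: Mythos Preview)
Your proposal is correct and follows essentially the same approach as the paper: the paper's proof simply cites Lemmas \ref{lem:op:integrability}, \ref{lem:process:integrable} and \ref{lem:mass} for the $\FF^\gamma$-martingale property and Lemma \ref{lem:H1 martingale} for the failure of the $H^1$-property, which is exactly the chain of implications you spell out in detail.
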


\begin{proof}
Lemmas \ref{lem:op:integrability}, \ref{lem:process:integrable} and \ref{lem:mass} show that $M$ is an $\FF^\gamma$-martingale on $[0, t_G]$. That $M$ fails to be an $H^1$-martingale on $[0, t_G]$ follows from Lemma \ref{lem:H1 martingale}.
\end{proof}

\begin{remark}
Even if $M$ is an $H^1$-martingale on $[0, t_G]$, one may have $F \not\in L^1(\diff G)$. Indeed, define $G: \RR \to [0, 1]$ by $G(t) = \frac{1}{\e -1} \sum_{k = 1}^\infty \frac{1}{k!} \1_{[k, \infty)}(t)$ and $F: [0, \infty) \to \RR$ by $F(t) = {\sum_{k = 1}^\infty (k-1)! \,\1_{[k, k+1)}(t)}$. Then $t_G = \infty$ and for $n \in \NN$,
\begin{align*}
\int_{(0, \infty)} |F(v-)| \dd G(v)
&= \sum_{n = 1}^\infty F(n - 1) \Delta G(n) 
= \frac{1}{\e -1} \sum_{n = 2}^\infty \frac{1}{n(n-1)}
< \infty,\\
\int_{(0, \infty)} |F(v)| \dd G(v)
&= \sum_{n = 1}^\infty F(n) \Delta G(n)
= \frac{1}{\e -1} \sum_{n = 1}^\infty \frac{1}{n}
= \infty.
\end{align*}
So $F(\cdot-) \in L^1(\diff G)$ but $F \not\in L^1(\diff G)$. It remains to show that $M$ is an $H^1$-martingale. In view of Lemma \ref{lem:H1 martingale}, the fact that $F(\cdot-) \in L^1(\diff G)$ and the definition of $\op{G}F$, this boils down to proving that
\begin{align*}
\int_{(0, \infty)} \left| \frac{\diff F}{\diff G}(v)\barG(v) \right| \dd G(v)
&= \sum_{n = 1}^\infty \left|\frac{\Delta F (n)}{\Delta G(n)}\right|\barG(n) \Delta G(n)
= \sum_{n = 1}^\infty |\Delta F(n)| \barG(n)
\end{align*}
is finite. But this is true, because for $n \in \NN$,
\begin{align*}
|\Delta F(n)|
&= F(n) - F(n-1) \leq F(n)
= (n-1)!,\\
\barG(n)
&=1 - G(n)
= \frac{1}{\e -1} \sum_{k = {n+1}}^\infty \frac{1}{k!} 
\leq \frac{\e }{\e - 1} \frac{1}{(n +1)!}.
\end{align*}
\end{remark}

\subsection{Summary and examples}
\label{sec:martingale examples}

The flow chart in Figure \ref{fig:diagram} summarises the results of the previous sections. It gives the conditions one has to check in order to determine the (local) martingale properties of $\mart{G}{F}$. In this section, we give examples for four of the five cases one can end up with; examples for the fifth case that $\mart{G}{F}$ is an $H^1$-martingale are easy to find (take, e.g., $F\lac G$ bounded with $\op{G} F$ bounded).

\begin{figure}[!ht]
\tikzstyle{decision} = [diamond, draw, text width=6em, text centered]
\tikzstyle{block} = [rectangle, draw, text width=5.5em, text centered, rounded corners, minimum height = 3em]
\tikzstyle{line} = [draw, ->]

\begin{tikzpicture}[node distance = 4cm, auto]
    \node [decision] (01) {$\Delta G(t_G) = 0$};
    \node [decision, below of=01] (02) {$\op{G} F \;\diff G \text{-int.}$};
    \node [decision, below of=02] (03) {${F(t)\barG(t)\to 0}$\\as $t\uparrow\uparrow t_G$};
    \node [decision, below of=03] (04) {$F(\cdot-) \;\diff G \text{-int.}$};
    \node [decision, left of=02] (05) {$\op{G} F \;\diff G \text{-int.}$};
    \node [block, right of=02] (06) {nonint. loc.~mart.};
    \node [block, right of=03] (07) {int.~strict loc.~mart.};
    \node [block, right of=04] (08) {UI mart. not in $H^1$};
    \node [block, left of=05] (09) {not semimart.};
    \node [block, left of=04] (10) {$H^1$-mart.};
    \path [line] (01) -- node{yes} (02);
    \path [line] (02) -- node{yes} (03);
    \path [line] (03) -- node{yes} (04);
    \path [line] (02) -- node{no} (06);
    \path [line] (03) -- node{no} (07);
    \path [line] (04) -- node{no} (08);
    \path [line] (01) -| node[above, near start]{no} (05);
    \path [line] (05) -- node[above]{no} (09);
    \path [line] (05) -- node{yes} (10);
    \path [line] (04) -- node[above]{yes} (10);
\end{tikzpicture}
\caption{Decision diagram for single jump local martingales.}
\label{fig:diagram}
\end{figure}
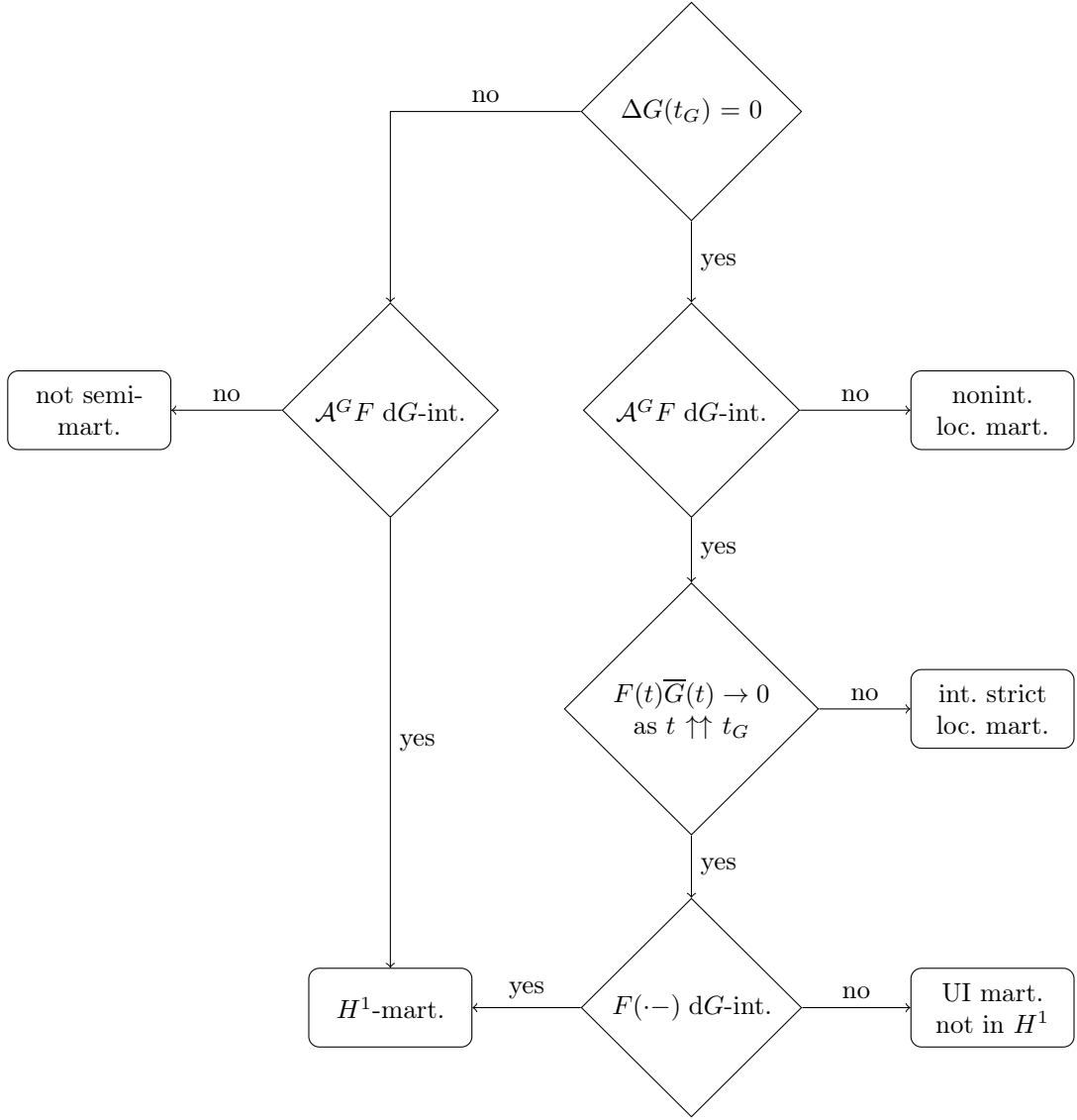

\begin{example}[A process which fails to be a semimartingale]
\label{ex:not semimartingale}
Let $G: \RR \to [0, 1]$ be given by $G(t) = \frac{t}{2} \1_{[0, 1)}(t) + \1_{[1, \infty)}(t)$, i.e., the law of the jump time $\gamma$ is a mixture of a uniform distribution on $(0, 1)$ and a Dirac measure at $1$. In particular, $t_G = 1$ and $\Delta G(t_G) = \frac{1}{2}$. The idea is to choose any $F \lac G$ that is of infinite variation on $[0,t_G]$. Then
\begin{align*}
P[M_t = F(t), t\in[0,t_G)]
&\geq P[\gamma = t_G]
= \Delta G(t_G)
= \frac{1}{2}
\end{align*}
by the definition of $\mart{G}{F}$, and Lemma \ref{lem:not semimartingale test} asserts that $\mart{G}{F}$ fails to be a semimartingale on $[0,t_G]$. (Alternatively, one can use Lemma \ref{lem:op:integrability:DeltaG>0} to infer that $\op{G}F \not\in L^1(\diff G)$ and then apply Lemma \ref{lem:process:integrable} and Theorem \ref{thm:local martingale} (b).) A concrete example is given by $F(t) = \1_{[0, 1)}(t)\sin\frac{1}{1-t}$, $t \geq 0$.
\end{example}

For the remaining examples, let $G: \RR \to [0, 1]$ be given by $G(t) = t \1_{[0, 1)}(t) + \1_{[1, \infty)}(t)$, i.e., $\gamma$ is uniformly distributed on $(0, 1)$. In particular, $t_G = 1$ and $\Delta G(t_G)=0$. Then for each $F \lac G$, $\mart{G}{F}$ is a local martingale on $[0, 1]$ by Theorem \ref{thm:local martingale} (a).

\begin{example}[A strict local martingale that fails to be integrable]
\label{ex:nonintegrable strict local martingale}
The idea is to find an $F \lac G$ such that $\mart[t_G]{G}{F}$ is not integrable or, equivalently by Lemma \ref{lem:process:integrable}, $\op{G}F \not\in L^1(\diff G)$. A concrete example is given by $F(t) = \1_{[0, 1)}(t)\sin\frac{1}{1- t}$, $t\geq 0$. Then $F \lac G$,
\begin{align*}
\op{G} F(v)
&= \sin\frac{1}{1-v} - \frac{1}{1-v} \cos\frac{1}{1-v} \;\; \diff G \text{-a.e.~on } (0, 1),
\end{align*}
and one can show that $\op{G}F\not\in L^1(\diff G)$. Indeed, it suffices to show that $\int_0^1 \left\vert \frac{1}{1-v} \cos\frac{1}{1-v} \right\vert \dd v = \infty$, or equivalently, $\int_1^\infty \left\vert \frac{\cos x}{x} \right\vert \dd x = \infty$. But for each $k \in \NN$,
\begin{align*}
\int_{(2k-1)\frac{\pi}{2}}^{(2k+1)\frac{\pi}{2}} \left\vert \frac{\cos x}{x} \right\vert \dd x
&\geq \frac{1}{(2k+1)\frac{\pi}{2}} \int_{(2k-1)\frac{\pi}{2}}^{(2k+1)\frac{\pi}{2}} \left\vert \cos x \right\vert \dd x
= \frac{2}{(2k+1)\frac{\pi}{2}}
\end{align*}
and summing over $k$ leads to an infinite series on the right-hand side.
\end{example}

\begin{example}[An integrable strict local martingale]
\label{ex:integrable strict local martingale}
Here the idea is to find $F \lac G$ with $F(0)>0$ and $\op{G}{F} = 0$ $\diff G$-a.e on $(0,1)$. This means that $\mart{G}{F}$ starts at $F(0)>0$ at time $0$ and ends up at zero at time $1$ $\as{P}$ Therefore, $\mart{G}{F}$ cannot be a martingale on $[0,1]$. A simple example is given by $F(t) = \frac{1}{1-t} \1_{[0, 1)}(t)$, $t\geq 0$. Then $F \lac G$,
\begin{align*}
\Delta\mu
&= -\lim_{t \uparrow\uparrow 1} F(t)\barG(t)
= -\lim_{t \uparrow\uparrow 1} \frac{1 - t}{1 - t}
= -1 \quad \text{and} \quad \op{G} F
= 0 \;\; \diff G \text{-a.e.~on } (0, 1).
\end{align*}
Thus, by Lemmas \ref{lem:process:integrable} and \ref{lem:mass}, $\mart{G}{F}$ is an integrable strict local martingale and a supermartingale.
\end{example}

\begin{example}[A martingale that fails to be in $H^1$]
\label{ex:not H1 martingale}
Finding $F \lac G$ such that $\mart{G}F$ is a martingale on $[0,1]$ that fails to be in $H^1$ is a bit tricky: if $F$ grows too slowly, then $\mart{G}{F}$ will be an $H^1$-martingale, but if $F$ grows too quickly, then $\mart{G}{F}$ will be a strict local martingale. The idea is to find an $F \lac G$ that satisfies the assumptions of Corollary \ref{cor:not H1 martingale}. Define $F:[0, \infty) \to \RR$ by $F(t) = \frac{1}{(1-t)\log \frac{\e}{1-t}} \1_{[0, 1)}(t)$.
Then $F \lac G$ is nonnegative and increasing on $[0, 1)$, and by monotone convergence,
\begin{align*}
\int_{(0, 1)} |F(v-)| \dd G(v) 
&= \lim_{t \uparrow\uparrow 1} \int_0^t F(v) \dd v 
= \lim_{t \uparrow\uparrow 1} \left[ \log\left(\log\frac{\e}{1-v}\right)\right]^t_0
= \lim_{t \uparrow\uparrow 1} \log \left( \log\frac{\e}{1-t}\right)
= \infty.
\end{align*}
Moreover,
\begin{align*}
\op{G} F(v)
&= \frac{1}{(1-v)\left(\log\frac{\e}{1-v}\right)^2} \;\; \diff G \text{-a.e.~on } (0, 1).
\end{align*}
Thus, $\op{G} F$ is nonnegative, and therefore $\left(\op{G} F \right)^- \in L^1(\diff G)$.
Finally,
\begin{align*}
-\Delta \mu
&= \lim_{t \uparrow\uparrow 1} F(t)\barG(t)
= \lim_{t \uparrow\uparrow 1} \frac{1}{\log\frac{\e}{1-t}}
= 0.
\end{align*}
Hence, by Corollary \ref{cor:not H1 martingale}, $\mart{G}{F}$ is a (uniformly integrable) martingale on $[0,1]$ but not in $H^1$.
\end{example}

\section{A counter-example in stochastic integration}
\label{sec:counterexample}

In this section, we consider the following problem from stochastic integration: Does there exist a pair $(M,H)$, where $M = (M_t)_{t\in[0,1]}$ is a (true) martingale and $H = (H_t)_{t\in[0,1]}$ an integrand with $0 \leq H \leq 1$ such that the stochastic integral $H \bullet M$ is a \emph{strict} local martingale? By the BDG inequality, $H \bullet M$ is again a martingale if $H$ is bounded and $M$ is an $H^1$-martingale. Nevertheless, the answer to the above question is positive as is shown in \cite[Corollaire VI.21]{Meyer1976} by an abstract existence proof using the Baire category theorem. It took, however, 30 years until a quite ingenious concrete example was published by Cherny \cite{Cherny2006}. He constructed the martingale integrator $M = (M_n)_{n \in \NN}$ recursively as follows. Starting with $M_0 = 1$, $M$ moves up or down at time $1$. If it moves down, it stays constant afterwards and if it moves up, $M$ can again move up or down at time $2$, and so on. The precise magnitudes of up and down 
movements are chosen such that $M$ becomes a uniformly integrable martingale that is not in $H^1$. Note that the structure of $M$ is precisely of the form $\mart{G}{F}$. Indeed, if $\gamma$ denotes the time when $M$ moves down and $G$ is the distribution function of $\gamma$, we can find a function $F \lac G$ such that the piecewise constant extension of $M$ to $[0,\infty)$ equals $\mart{G}{F}$. The integrand in Cherny's example is, up to a time transformation, the same as we use in Theorem \ref{thm:stochastic integral} below. The goal of this section is to provide an example of this kind, which works for $G$ the uniform distribution and any nondecreasing function $F$ such that $\mart{G}{F}$ is a uniformly integrable martingale but not in $H^1$.

In preparation of our counter-example, we first show that the class of single jump local martingales is closed under stochastic integration with bounded deterministic integrands.

\begin{proposition}
\label{prop:stochastic integral representation}
Let $F\lac G$ be such that $\mart{G}{F}$ is a local martingale on $[0,t_G]$. Moreover, let ${J:[0,\infty)\to\RR}$ be a bounded Borel-measurable function. Define ${F^J:[0,\infty) \to \RR}$ by ${F^J(t) = \int_{(0,t]} J(u)\dd F(u)}$ for $t\in[0,t_G)$ and $F^J(t)=0$ for $t \geq t_G$. Then $F^J \lac G$ and
\begin{align*}
J \bullet \mart{G}{F}
&= \mart{G}{F^J} \;\; \as{P}
\end{align*}
\end{proposition}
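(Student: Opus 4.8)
The plan is to reduce the identity $J\bullet\mart{G}{F}=\mart{G}{F^J}$ to two things: (i) that $F^J\lac G$, so that $\mart{G}{F^J}$ is well defined, and (ii) a pathwise computation of the stochastic integral $J\bullet\mart{G}{F}$, exploiting the fact that $\mart{G}{F}$ is a finite-variation process on $[0,t_G)$ with a single extra jump at $\gamma$ when $\gamma=t_G$ (if $\Delta G(t_G)>0$). First I would check (i): for $0<b<t_G$, the function $F$ restricted to $(0,b]$ is of finite variation with $\diff F$ absolutely continuous with respect to $\diff G$, and $J$ is bounded Borel, so $F^J(t)=\int_{(0,t]}J\dd F=\int_{(0,t]}J\frac{\diff F}{\diff G}\dd G$ is absolutely continuous with respect to $G$ on $(0,b]$ with local density $J\cdot\frac{\diff F}{\diff G}$. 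Hence $F^J\lac G$ and $\frac{\diff F^J}{\diff G}=J\frac{\diff F}{\diff G}$ $\diff G$-a.e.~on $(0,t_G)$.

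Next I would compute $\mart{G}{F^J}$ explicitly using its definition and the formula for $\op{G}F^J$ from Lemma~\ref{lem:op}. On $\{t<\gamma\}$ one gets $\mart[t]{G}{F^J}=F^J(t)=\int_{(0,t]}J\dd F$. On $\{t\ge\gamma\}$ with $\gamma<t_G$ one gets the constant value $\op{G}F^J(\gamma)=F^J(\gamma-)-\frac{\diff F^J}{\diff G}(\gamma)\barG(\gamma)=F^J(\gamma-)-J(\gamma)\frac{\diff F}{\diff G}(\gamma)\barG(\gamma)=F^J(\gamma-)+J(\gamma)\big(\op{G}F(\gamma)-F(\gamma-)\big)$; and on $\{\gamma=t_G\}$ (only relevant when $\Delta G(t_G)>0$, in which case $\mart{G}{F}$ is an $H^1$-martingale and $F(t_G-)$ exists in $\RR$) one gets $\op{G}F^J(t_G)=F^J(t_G-)=\int_{(0,t_G)}J\dd F$. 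I would then compute the stochastic integral $J\bullet\mart{G}{F}$ on the same three regions. Since $\gamma$ is an $\FF^\gamma$-stopping time and $\mart{G}{F}$ is of finite variation on $[0,t_G)$, the stochastic integral there coincides with a pathwise Lebesgue--Stieltjes integral: on $\{t<\gamma\}$, $(J\bullet\mart{G}{F})_t=\int_{(0,t]}J(u)\dd F(u)=F^J(t)$, matching the above. At time $\gamma<t_G$ the integrator has a jump of size $\op{G}F(\gamma)-F(\gamma-)$, contributing $J(\gamma)\big(\op{G}F(\gamma)-F(\gamma-)\big)$ to the integral; added to the accumulated $\int_{(0,\gamma)}J\dd F=F^J(\gamma-)$ this gives exactly $\op{G}F^J(\gamma)$, and the integrator is constant afterwards so the integral stays put. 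Finally, on $\{\gamma=t_G\}$, since $\mart{G}{F}$ is an $H^1$-martingale (hence the stochastic integral of the bounded $J$ against it is well behaved and continuous at $t_G$), the value at $t_G$ is the limit of the values for $t<t_G$, namely $\int_{(0,t_G)}J\dd F=F^J(t_G-)=\op{G}F^J(t_G)$. Hence $J\bullet\mart{G}{F}=\mart{G}{F^J}$ $P$-a.s.

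The main subtlety will be making the ``stochastic integral equals pathwise integral'' step rigorous without invoking the usual conditions, and in particular handling the terminal time $t_G$ when $\Delta G(t_G)>0$: there the integrand is deterministic and bounded and $\mart{G}{F}$ is an $H^1$-$\FF^\gamma$-martingale by Theorem~\ref{thm:local martingale}~(c), so $J\bullet\mart{G}{F}$ is again a martingale and one can identify its value at $t_G$ as an $L^1$-limit, which matches the pathwise limit $F^J(t_G-)$; a short appeal to dominated convergence (with dominating function $\|J\|_\infty\cdot|F|$-type bounds from \eqref{eqn:thm:local martingale:pf:40}) closes this. The case $\Delta G(t_G)=0$ is easier since then $\gamma<t_G$ $P$-a.s.~and the whole computation is a genuinely pathwise finite-variation calculation combined with the localising sequence from Theorem~\ref{thm:local martingale}~(a) to justify that $J\bullet\mart{G}{F}$ is itself a local martingale (which also confirms, via uniqueness of the classification, that $\mart{G}{F^J}$ is the right object).
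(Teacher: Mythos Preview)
Your proposal is correct and follows essentially the same approach as the paper: verify $F^J\lac G$ with local density $J\,\frac{\diff F}{\diff G}$, then compute both sides pathwise on $\{t<\gamma\}$ and on $\{t\ge\gamma,\ \gamma<t_G\}$, obtaining $F^J(t)$ and $\op{G}F^J(\gamma)=F^J(\gamma-)+J(\gamma)\big(\op{G}F(\gamma)-F(\gamma-)\big)$ respectively. The paper in fact only writes out the case $\Delta G(t_G)=0$ (which is all that is needed for the subsequent application), so your additional sketch for $\{\gamma=t_G\}$ when $\Delta G(t_G)>0$ --- using that $\mart{G}{F}$ is then an $H^1$-martingale by Theorem~\ref{thm:local martingale}~(c), that $F$ has finite variation on $[0,t_G]$ by Lemma~\ref{lem:op:integrability:DeltaG>0}, and hence $F^J(t_G-)$ exists and equals both $\op{G}F^J(t_G)$ and the terminal value of the integral --- goes a bit beyond what the paper does.
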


\begin{proof}
We only establish the result for the case $\Delta G(t_G) =0$, which corresponds to the setting of Theorem \ref{thm:stochastic integral} below.
Since $J$ is bounded and $F \lac G$, $J \in L^1_\loc(\diff F)$, so that $F^J$ is well defined. Clearly, $F^J \lac G$ with local density $\frac{\diff F^J}{\diff G} = J \frac{\diff F}{\diff G}$. Fix $t \in[0,t_G]$. Using the definition of $\mart{G}{F}$, on $\lbrace t < \gamma \rbrace $,
\begin{align}
J \bullet \mart{G}{F}_t
&= \int_{(0,t]} J(u) \dd F(u)
= F^J(t)
= \mart[t]{G}{F^J} \;\; \as{P}
\label{eqn:prop:stochastic integral representation:pf:10}
\end{align}
Using dominated convergence, on $\lbrace t \geq \gamma \rbrace \cap \lbrace \gamma < t_G \rbrace $,
\begin{align}
J \bullet \mart{G}{F}_t
&= \int_{(0,\gamma)} J(u) \dd \mart{G}{F}_u + J(\gamma)\Delta \mart{G}{F}_\gamma\notag\\
&= \int_{(0,\gamma)} J(u) \dd F(u) + J(\gamma)\left(\op{G}F(\gamma) - F(\gamma-)\right)\notag\\
&= F^J(\gamma-) - J(\gamma)\frac{\diff F}{\diff G}(\gamma)\barG(\gamma)
= F^J(\gamma-) - \frac{\diff F^J}{\diff G}(\gamma)\barG(\gamma)\notag\\
&= \op{G}F^J(\gamma)
= \mart[t]{G}{F^J} \;\; \as{P}
\label{eqn:prop:stochastic integral representation:pf:20}
\end{align}
As $\Delta G(t_G) = 0$, $\gamma < t_G$ $\as{P}$ and \eqref{eqn:prop:stochastic integral representation:pf:10} together with \eqref{eqn:prop:stochastic integral representation:pf:20} completes the proof.
\end{proof}

Throughout the rest of this section, let $G: \RR \to [0, 1]$ be given by $G(t) = t \1_{[0, 1)}(t) + \1_{[1, \infty)}(t)$, i.e., $\gamma$ is uniformly distributed on $(0, 1)$. In particular, $t_G = 1$ and $\Delta G(t_G)=0$. Moreover, we always consider the filtration $\FF^\gamma$ introduced in Section \ref{sec:classification}.

\begin{theorem}
\label{thm:stochastic integral}
Let $F \lac G$ be such that $\mart{G}{F}$ is a martingale on $[0, 1]$ which is not in $H^1$ (cf.~Corollary \ref{cor:not H1 martingale}). Suppose that $F$ is nondecreasing on $(0, 1)$ and satisfies $F(0) = 0$. Define the \emph{deterministic} integrand $J = (J_t)_{t \in [0,1]}$ by
\begin{align*}
J
&= \sum_{n = 0}^\infty \1_{\rdbrack 1-2^{-2n}, 1- 2^{-(2n+1)}\rdbrack}.
\end{align*}
Then the stochastic integral $J \bullet \mart{G}{F}$ is a strict local martingale on $[0,1]$.
\end{theorem}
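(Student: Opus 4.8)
The plan is to use Proposition~\ref{prop:stochastic integral representation} to identify $J \bullet \mart{G}{F}$ with a single jump local martingale and then to prove that this process is \emph{not integrable} on $[0,1]$. Since it is automatically a local martingale there (Theorem~\ref{thm:local martingale}(a), as $\Delta G(t_G)=0$) and true martingales are integrable, non-integrability already yields the strict local martingale property.

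First I would collect the structural facts. By Proposition~\ref{prop:stochastic integral representation}, $F^J \lac G$ and $J \bullet \mart{G}{F} = \mart{G}{F^J}$ $P$-a.s. Since $0 \leq J \leq 1$, $F$ is nondecreasing and $F(0)=0$ (so $F \geq 0$ on $[0,1)$), the function $F^J$ is nonnegative, nondecreasing, null at $0$, and $F^J \leq F$ on $[0,1)$; moreover $F$ and $F^J$ are continuous on $[0,1)$, because $\diff G$ is Lebesgue measure on $(0,1)$. The same applies to $F^{1-J}(t) := \int_{(0,t]}(1-J(u))\dd F(u)$ (with $F^{1-J}(t):=0$ for $t \geq 1$), which lies in the class $\lac G$ by Proposition~\ref{prop:stochastic integral representation} applied to $1-J$; moreover $F = F^J + F^{1-J}$ on $[0,1)$, hence $\op{G}F = \op{G}F^J + \op{G}F^{1-J}$ $\diff G$-a.e.\ on $(0,1)$. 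Finally, since $\mart{G}{F}$ is a martingale on $[0,1]$ it is integrable there, so $\op{G}F \in L^1(\diff G)$ by Lemma~\ref{lem:process:integrable}; and since it is not an $H^1$-martingale, Lemma~\ref{lem:H1 martingale} forces $F(\cdot-)\notin L^1(\diff G)$, i.e.\ $\int_0^1 F(v)\dd v = \infty$.

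Now suppose, for a contradiction, that $\mart{G}{F^J}$ is integrable on $[0,1]$, i.e.\ $\op{G}F^J \in L^1(\diff G)$; then also $\op{G}F^{1-J} = \op{G}F - \op{G}F^J \in L^1(\diff G)$. The key estimate is that in this case
\[
\int_0^1 F^J(v)\dd v \leq 3\int_{(0,1)}\bigl\vert\op{G}F^J(v)\bigr\vert\dd G(v) < \infty
\quad\text{and likewise}\quad
\int_0^1 F^{1-J}(v)\dd v < \infty ,
\]
which contradicts $\int_0^1 F(v)\dd v = \int_0^1 F^J(v)\dd v + \int_0^1 F^{1-J}(v)\dd v = \infty$. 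To get the first inequality, decompose $[0,1) = \bigsqcup_{m\geq 0} I_m$ with $I_m := (1-2^{-m},\,1-2^{-(m+1)}]$ (of Lebesgue measure $2^{-(m+1)}$), and note that $J \equiv 1$ on $I_m$ for even $m$ and $J \equiv 0$ on $I_m$ for odd $m$. On a block $I_m$ with $m$ odd, $F^J$ is constant, say $F^J \equiv b_m$, so $\tfrac{\diff F^J}{\diff G} = 0$ and hence $\op{G}F^J = b_m$ on $I_m$; thus $\int_{I_m} F^J\dd v = b_m 2^{-(m+1)} = \int_{I_m}\vert\op{G}F^J\vert\dd G$. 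On a block $I_m$ with $m$ even, monotonicity of $F^J$ gives $F^J \leq b_{m+1}$ on $I_m$ (its right endpoint is the left endpoint of the flat block $I_{m+1}$), so $\int_{I_m}F^J\dd v \leq b_{m+1}2^{-(m+1)} = 2\int_{I_{m+1}}\vert\op{G}F^J\vert\dd G$. Summing over $m \geq 0$ gives the bound. The estimate for $F^{1-J}$ is identical with the parity of the blocks interchanged (now $I_0$ is a flat block on which $F^{1-J}\equiv 0$). Consequently $\mart{G}{F^J}$ is not integrable on $[0,1]$; being a local martingale there which is not integrable, it is not a martingale, hence a strict local martingale, and so is $J \bullet \mart{G}{F} = \mart{G}{F^J}$.

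The step I expect to be the main obstacle is this integrability estimate: the crux is to observe that $\op{G}F^J$ agrees with $F^J(\cdot-)$ precisely on the ``flat'' dyadic blocks $I_m$ (odd $m$), where $\tfrac{\diff F^J}{\diff G}$ vanishes, and that monotonicity of $F^J$ then transports this control onto the adjacent ``active'' blocks. Everything else is bookkeeping, together with the observation — built into the choice of $J$ and the hypothesis that $\mart{G}{F}$ is a genuine, non-$H^1$ martingale — that integrability of $\mart{G}{F^J}$ would push $\mart{G}{F}$ into $H^1$.
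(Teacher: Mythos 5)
Your proposal is correct and takes essentially the paper's route: identify $J \bullet \mart{G}{F}$ with $\mart{G}{F^J}$ via Proposition~\ref{prop:stochastic integral representation}, use Theorem~\ref{thm:local martingale}~(a) and Lemma~\ref{lem:process:integrable} to reduce the claim to $\op{G}F^J \notin L^1(\diff G)$, and exploit the same dyadic-block structure (constancy of $F^J$, $F^{1-J}$ on alternating blocks, monotonicity, comparable block lengths) together with $F(\cdot-)\notin L^1(\diff G)$ from Lemma~\ref{lem:H1 martingale}. The only difference is organisational: you argue by contradiction via $\op{G}F = \op{G}F^J + \op{G}F^{1-J}$ and the upper bounds $\int_0^1 F^J(v)\dd v \le 3\int_{(0,1)}\vert\op{G}F^J(v)\vert\dd G(v)$ and $\int_0^1 F^{1-J}(v)\dd v \le 3\int_{(0,1)}\vert\op{G}F^{1-J}(v)\vert\dd G(v)$, whereas the paper subtracts the integrable term $J\,\op{G}F$ and proves the direct lower bound $\int_0^1\bigl((1-J(v))F^J(v)+J(v)F^{1-J}(v)\bigr)\dd v \ge \tfrac{1}{6}\int_{(0,1)}F(v)\dd G(v)=\infty$, which is the same bookkeeping in contrapositive form.
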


A concrete example for a function $F$ satisfying all conditions of Theorem \ref{thm:stochastic integral} is given in Example \ref{ex:not H1 martingale}. The choice of $J$ is inspired by Cherny \cite{Cherny2006}.
\begin{proof}
Proposition \ref{prop:stochastic integral representation} yields $F^J\lac G$ and $J \bullet \mart{G}{F} = \mart{G}{F^J}$ $\as{P}$ By Theorem \ref{thm:local martingale}, $\mart{G}{F^J}$ is a local $\FF^\gamma$-martingale on $[0,1]$. It will be a strict local martingale if $\mart[1]{G}{F^J}$ is not integrable, which holds by Lemma \ref{lem:process:integrable} if and only if
\begin{align}
\label{eqn:thm:stochastic integral:pf:10}
\int_{(0,1)}| \op{G} F^J(t) | \dd G(t)
&= \int_0^1 \left| F^J(t) - J(t) \frac{\diff F}{\diff G}(t) (1- G(t)) \right| \dd t
= \infty.
\end{align}
Since $\mart{G}{F}$ is a martingale on $[0,1]$, $\op{G}F \in L^1(\diff G)$ by Lemma \ref{lem:process:integrable} and so
\begin{align*}
\int_0^1 \Big| J(t) F(t) - J(t) \frac{\diff F}{\diff G}(t) (1- G(t)) \Big| \dd t
&= \int_0^1 J(t) | \op{G} F(t) |\dd t \\
&\leq \int_0^1 | \op{G} F(t) | \dd t
= \int_{(0,1)} | \op{G} F(t) | \dd G(t)
< \infty.
\end{align*}
In order to establish \eqref{eqn:thm:stochastic integral:pf:10}, it thus suffices to show that
\begin{align}
\label{eqn:thm:stochastic integral:pf:20}
\int_0^1 \left| F^J(t) - J(t) F(t) \right| \dd t
&= \int_0^1 \left( (1- J(t)) F^J(t) + J(t) F^{1-J}(t) \right) \dd t
= \infty.
\end{align}
For $n \in \NN_0$, set $t_n := 1 - 2^{-n}$ and $t_{-1}:=-1$. We note that $t_{m + 1} - t_m = \frac{1}{2} (t_m - t_{m-1}) = \frac{1}{3}(t_{m +1} - t_{m -1})$ for $m \in \NN_0$ and that $F^J$ and $F^{1-J}$ are constant on $\lbrace J = 0 \rbrace $ and $\lbrace J = 1 \rbrace $, respectively. Using this, the nonnegativity of $F^{1-J}$ and $F$, the fact that $F = F^J + F^{1-J}$ is nondecreasing on $(0,1)$, we obtain
\begin{align}
&\int_0^1 \Big( (1- J(t)) F^J(t) + J(t) F^{1-J}(t) \Big) \dd t \notag\\
&= \sum_{n = 0}^\infty \left( F^J(t_{2n+1}) (t_{2n+2} - t_{2n +1}) + F^{1-J}(t_{2n+1}) (t_{2n+1} - t_{2n}) \right) \notag\\
&\geq \frac{1}{2} \sum_{n = 0}^\infty F(t_{2n+1}) (t_{2n+1} - t_{2n})
= \frac{1}{6} \sum_{n =0}^\infty F(t_{2n+1}) (t_{2n+1} - t_{2n -1})
\geq \frac{1}{6} \int_{(0,1)} F(t) \dd G(t).
\label{eqn:thm:stochastic integral:pf:30}
\end{align}
Since $\mart{G}{F}$ is not in $H^1$ by assumption, but $\mart{G}{F} \in L^1(\diff G)$, we get $F(\cdot -) \not\in L^1(\diff G)$ by Lemma \ref{lem:H1 martingale}. This implies $F\not\in L^1(\diff G)$ because $F$ is nondecreasing. Combining this with \eqref{eqn:thm:stochastic integral:pf:30} shows that \eqref{eqn:thm:stochastic integral:pf:20} holds true.
\end{proof}

\appendix

\section{Elements of real analysis}
\label{sec:analysis}

\begin{definition}
Let $T \in (0, \infty]$. A function $L: [0, \infty) \to \RR$ is called a \emph{left-continuous step function on $[0, T)$} if it is of the form
\begin{align*}
L
&= \sum_{j = 1}^k a_j \1_{(t_{j-1}, t_j]},
\end{align*}
where $k \in \NN$, $0 \leq t_0 < t_1 < \cdots < t_k < T$ and $a_j \in \RR$, $j = 1, \ldots, k$.
If $F: [0, T] \to \RR$ is any other function, we define the \emph{elementary integral of $L$ with respect to $F$ on $(0, T]$} by
\begin{align*}
\int_{(0, T]} L(t) \dd F(t)
&:= \sum_{j = 1}^k a_j \left(F(t_j) - F(t_{j-1}) \right).
\end{align*}
\end{definition}

The following result is an easy exercise in analysis.

\begin{lemma}
\label{lem:infinite variation}
Let $T \in (0, \infty]$ and $F: [0, T] \to \RR$ be a function which is of infinite variation on $[0, T]$. Then for each $n \in \NN$, there exists a left-continuous step function $L_n: [0, \infty) \to \RR$ on $[0, T)$ with $\sup_{t \geq 0} |L_n(t)| \leq 1/n$ and 
\begin{align*}
\int_{(0, T]} L_n(t) \dd F(t)
&\geq 1.
\end{align*}
\end{lemma}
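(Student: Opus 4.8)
The statement is the standard fact that infinite variation of $F$ on $[0,T]$ allows one to produce elementary integrals of uniformly small step functions that are nonetheless bounded below. The plan is to extract from the definition of infinite variation a finite partition whose associated sum of absolute increments of $F$ is arbitrarily large, and then to build the step function $L_n$ by assigning to each subinterval of that partition the sign of the corresponding increment, scaled by $1/n$.

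\textbf{Key steps.} First I would recall that $F$ being of infinite variation on $[0,T]$ means $\sup \sum_{j=1}^k |F(s_j) - F(s_{j-1})| = \infty$, where the supremum is over all finite partitions $0 \le s_0 < s_1 < \cdots < s_k \le T$ (one has to be slightly careful when $T = \infty$, but the definition still refers to finite partitions with $s_k \le T$, or one may take $s_k < T$ after a harmless relabelling, since dropping the last point only decreases the sum by a bounded amount; in any case the supremum being infinite gives partitions with $s_k$ strictly below $T$ and arbitrarily large variation sum). Fix $n \in \NN$. By this definition, choose a finite partition $0 \le s_0 < s_1 < \cdots < s_k < T$ with $\sum_{j=1}^k |F(s_j) - F(s_{j-1})| \ge n$. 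Now define
\begin{align*}
L_n
&:= \frac{1}{n} \sum_{j=1}^k \sgn\big(F(s_j) - F(s_{j-1})\big) \1_{(s_{j-1}, s_j]}.
\end{align*}
This is a left-continuous step function on $[0,T)$ in the sense of the definition (with $a_j = \frac{1}{n}\sgn(F(s_j)-F(s_{j-1})) \in \RR$ and breakpoints $s_0 < \cdots < s_k < T$), and clearly $\sup_{t \ge 0} |L_n(t)| \le 1/n$. Finally, by the definition of the elementary integral,
\begin{align*}
\int_{(0,T]} L_n(t) \dd F(t)
&= \frac{1}{n} \sum_{j=1}^k \sgn\big(F(s_j) - F(s_{j-1})\big)\big(F(s_j) - F(s_{j-1})\big)
= \frac{1}{n} \sum_{j=1}^k |F(s_j) - F(s_{j-1})|
\ge \frac{1}{n} \cdot n = 1,
\end{align*}
which is the desired conclusion.

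\textbf{Main obstacle.} There is no real obstacle; the only point requiring a little care is the bookkeeping at the right endpoint when $T = \infty$ versus $T < \infty$, so that the chosen partition genuinely satisfies $s_k < T$ as required by the definition of a left-continuous step function on $[0,T)$. When $T < \infty$, one can always perturb a partition with $s_k = T$ to one with $s_k < T$ at the cost of losing only one increment $|F(T) - F(s_{k-1})|$, which does not affect the supremum being infinite; when $T = \infty$, finite partitions automatically have $s_k < \infty$. A second trivial remark is that $\sgn(0) = 0$ is harmless since any term with $F(s_j) = F(s_{j-1})$ contributes nothing on either side. Beyond that, the proof is a direct unwinding of the two definitions.
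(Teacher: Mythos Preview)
Your proof is correct and is exactly the standard argument one would expect; the paper itself omits the proof, simply stating that the result ``is an easy exercise in analysis.'' The only point worth sharpening slightly is your endpoint remark: to see that the supremum over partitions with $s_k < T$ is still infinite when $T < \infty$, note that if it were bounded by some $V$, then $|F(t)| \le |F(0)| + V$ for all $t < T$, so any partition ending at $T$ would have variation sum at most $2V + |F(0)| + |F(T)| < \infty$, contradicting infinite variation on $[0,T]$. With this made explicit, the argument is complete.
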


\section{Elements of (semi-)martingale theory}
\label{sec:semimartingale theory}

Throughout this section, we fix a probability space $(\Omega, \cA, P)$ and a time horizon $T^*\in(0,\infty]$. Moreover, $I \subset [0,T^*]$ is assumed to be an interval of the form $[0,T)$ or $[0,T]$ for some $T \in (0,T^*]$.
\begin{definition}
\label{def:martingale}
Fix a stochastic process $X = (X_t)_{t \in [0,T^*]}$ and a filtration $\FF=(\cF_t)_{t \in [0,\infty]}$ of $\cA$.

\begin{enumerate}
\item $X$ is of \emph{finite variation on $I$} if for $P$-a.e.~$\omega$, the function $t \mapsto X_t(\omega)$ is of finite variation and càdlàg on $I$.
\item $X$ is called \emph{$\FF$-adapted on $I$} if $X_t$ is $\cF_t$-measurable for each $t \in I$.
\item $X$ is called \emph{integrable on $I$} if $\EX{\vert X_t \vert} < \infty$ for each $t \in I$.
\item $X$ is an \emph{$\FF$-(sub/super)martingale on $I$} if for $P$-a.e.~$\omega$, the function $t \mapsto X_t(\omega)$ is càdlàg on $I$ and $X$ is $\FF$-adapted on $I$, integrable on $I$, and satisfies the \emph{$\FF$-(sub/super)martingale property on $I$}, i.e.,
\begin{align*}
\cEX{X_t}{\cF_s}\;
&(\geq, \leq) = X_s \quad \text{for all } s \leq t \text{ in } I.
\end{align*}
$X$ is a \emph{(sub/super)martingale on $I$} if there exists a filtration $\FF'=(\cF'_t)_{t \in [0,\infty]}$ of $\cA$ such that $X$ is an $\FF'$-(sub/super)martingale on $I$.

\item $X$ is an \emph{$H^1$-$\FF$-martingale} on $I$ if $X$ is an $\FF$-martingale on $I$ and
\begin{align*}
\EX{\sup_{t \in I} \vert X_t \vert}
&< \infty.
\end{align*}
$X$ is an \emph{$H^1$-martingale} if there exists a filtration $\FF'=(\cF'_t)_{t \in [0,\infty]}$ of $\cA$ such that $X$ is an $H^1$-$\FF'$-martingale on $I$.

\item Let $\GG=(\cG_t)_{t \in [0,\infty]}$ be a subfiltration of $\FF$.
$X$ is a \emph{$\GG$-local $\FF$-martingale on $I$} if there exists an increasing sequence of $\GG$-stopping times $(\tau_n)_{n \in \NN}$ with values in $I \cup \lbrace T \rbrace $ such that for each $n \in \NN$, $X^{\tau_n}$ is an $\FF$-martingale on $I$, and
\begin{enumerate}
\item in case of $T \not\in I$, $\lim_{n \to \infty} \tau_n = T$ $\as{P}$,
\item in case of $T \in I$, $\lim_{n \to \infty} P[\tau_n = T] = 1$.
\end{enumerate}
In both cases, the sequence $(\tau_n)_{n \in \NN}$ is called a \emph{$\GG$-localising sequence} (for $X$). An $\FF$-local $\FF$-martingale on $I$ is simply called a \emph{local $\FF$-martingale on $I$}.
$X$ is a \emph{local martingale on $I$} if there exists a filtration $\FF'=(\cF'_t)_{t \in [0,\infty]}$ of $\cA$ such that $X$ is a local $\FF'$-martingale on $I$.
$X$ is a \emph{strict local martingale on $I$} if it is a local martingale on $I$, but not a martingale on $I$. 

\item $X$ is an \emph{$\FF$-semimartingale on $I$} if there are processes $M = (M_t)_{t\in I}$ and $A=(A_t)_{t\in I}$ such that $X = M +A$, where $M$ is local $\FF$-martingale on $I$ and $A$ is $\FF$-adapted on $I$ and of finite variation on $I$.
$X$ is a \emph{semimartingale on $I$} if there exists a filtration $\FF'=(\cF'_t)_{t \in [0,\infty]}$ of $\cA$ such that $X$ is an $\FF'$-semimartingale on $I$.
\end{enumerate}
Whenever we drop the qualifier ``on $I$'' in the above notations it is understood that $I = [0,T^*]$.
\end{definition}

The following result is a standard exercise in probability theory.
\begin{proposition}
Fix a stochastic process $X = (X_t)_{t \in [0,\infty]}$ and a filtration $(\cF_t)_{t \in [0,\infty]}$ of $\cA$.
\begin{enumerate}
\item Let $\FF'=(\cF'_t)_{t \in [0,\infty]}$ be a subfiltration of $\FF$ such that $X$ is $\FF'$-adapted on $I$. If $X$ is an $\FF$-(sub/super)martingale on $I$, then $X$ is also an $\FF'$-(sub/super)martingale on $I$.

\item Let $\GG=(\cG_t)_{t \in [0,\infty]}$, $\GG'=(\cG'_t)_{t \in [0,\infty]}$, $\FF'=(\cF'_t)_{t \in [0,\infty]}$ be subfiltrations of $\FF$ satisfying $\cG_t \subset \cG'_t \subset \cF'_t \subset \cF_t$ for each $t \in I$ and such that $X$ is $\FF'$-adapted on $I$. If $X$ is a $\GG$-local $\FF$-martingale on $I$, then it is also a $\GG'$-local $\FF'$-martingale on $I$.
\end{enumerate}
\end{proposition}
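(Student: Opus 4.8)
The plan is to deduce both statements from the tower property of conditional expectations together with its monotonicity, reducing part~(b) to part~(a) once the stopped processes have been seen to remain adapted to the smaller filtration. For part~(a), note first that having $P$-a.s.\ càdlàg paths on $I$ and being integrable on $I$ are properties of the process $X$ alone, hence unaffected by shrinking the filtration, whereas $\FF'$-adaptedness on $I$ is assumed; so only the (sub/super)martingale property has to be transferred. Fix $s \leq t$ in $I$. Since $\cF'_s \subseteq \cF_s$ and $X_t$ is integrable, the tower property gives $\cEX{X_t}{\cF'_s} = \cEX{\cEX{X_t}{\cF_s}}{\cF'_s}$ $\as{P}$ In the submartingale case $\cEX{X_t}{\cF_s} \geq X_s$ $\as{P}$, so monotonicity of conditional expectation and the $\cF'_s$-measurability of $X_s$ yield $\cEX{X_t}{\cF'_s} \geq \cEX{X_s}{\cF'_s} = X_s$ $\as{P}$ The supermartingale case is symmetric and the martingale case combines the two.

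For part~(b), let $(\tau_n)_{n \in \NN}$ be a $\GG$-localising sequence for $X$ relative to $\FF$. Because $\cG_t \subseteq \cG'_t$ for every $t \in I$, each $\tau_n$ is also a $\GG'$-stopping time with values in $I \cup \lbrace T \rbrace$, and $\cG'_t \subseteq \cF'_t$ makes it an $\FF'$-stopping time; the monotonicity of $(\tau_n)_{n \in \NN}$ and the limiting conditions in the definition of a local martingale on $I$ refer only to the $\tau_n$ and hence remain valid. It therefore suffices to show that each $X^{\tau_n}$ is an $\FF'$-martingale on $I$; as $X^{\tau_n}$ is an $\FF$-martingale on $I$ and $\FF'$ is a subfiltration of $\FF$, part~(a) reduces this to showing that $X^{\tau_n}$ is $\FF'$-adapted on $I$. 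To see this, observe that $X$ is $P$-a.s.\ càdlàg on $I$: on $\lbrace \tau_n > t \rbrace$ (in case $T \notin I$, with $t \in I$ arbitrary) respectively on $\lbrace \tau_n = T \rbrace$ (in case $T \in I$) the path of $X$ coincides on $[0,t]$ respectively on $I$ with that of the càdlàg process $X^{\tau_n}$, and letting $n \to \infty$, using $\lim_n \tau_n = T$ $\as{P}$ respectively $\lim_n P[\tau_n = T] = 1$, shows that $X_\cdot(\omega)$ is càdlàg on $I$ for $P$-a.e.\ $\omega$. Since $X^{\tau_n}_t = X_{\tau_n \wedge t}$ and $\tau_n \wedge t$ is an $\FF'$-stopping time bounded by $t$, the standard measurability result for stopped càdlàg adapted processes shows $X_{\tau_n \wedge t}$ to be $\cF'_{\tau_n \wedge t}$-measurable, hence $\cF'_t$-measurable, for every $t \in I$; thus $X^{\tau_n}$ is $\FF'$-adapted on $I$, and $(\tau_n)_{n \in \NN}$ is a $\GG'$-localising sequence exhibiting $X$ as a $\GG'$-local $\FF'$-martingale on $I$.

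The only genuinely delicate point is this last one, the $\FF'$-measurability of the stopped processes $X^{\tau_n}$: it is the familiar interaction between stopping times and stopped processes, and it goes through without the usual conditions on the filtrations, which are not imposed here. Everything else is a routine manipulation of conditional expectations.
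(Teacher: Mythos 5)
Part (a) of your argument is correct: the tower property plus monotonicity, together with the observation that integrability and the a.s.\ path regularity do not involve the filtration, is precisely the routine argument (the paper offers no proof of this proposition, labelling it a standard exercise, so the comparison can only be with the intended routine argument). Your strategy for (b) — keep the given localising sequence, note that each $\tau_n$ is a $\GG'$- and hence $\FF'$-stopping time, observe that the limiting conditions involve only the $\tau_n$, and reduce to (a) by checking that each $X^{\tau_n}$ is $\FF'$-adapted on $I$ — is also the natural one, and your derivation that $P$-a.e.\ path of $X$ is càdlàg on $I$ from the localising conditions is fine.

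The gap is exactly at the point you flag and then wave through: the claim that ``the standard measurability result for stopped càdlàg adapted processes'' applies ``without the usual conditions''. That result is the statement that a \emph{progressively measurable} process composed with a stopping time is measurable with respect to the stopped $\sigma$-field, and progressive measurability of an adapted process is deduced from right-continuity of \emph{every} path. Here Definition \ref{def:martingale} only guarantees càdlàg paths for $P$-a.e.\ $\omega$, and the filtrations are deliberately not completed, so the exceptional null set need not lie in any $\cF'_t$; on it the path of $X$ is unconstrained. Consequently the usual approximation of $\tau_n\wedge t$ from the right by countably valued stopping times only shows that $X_{\tau_n\wedge t}$ agrees $\as{P}$ with some $\cF'_t$-measurable random variable, which is strictly weaker than the $\cF'_t$-measurability of $X_{\tau_n\wedge t}$ itself that Definition \ref{def:martingale} demands; so the cited result does not apply verbatim, and your assertion is precisely what would need a proof. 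The step is immediate whenever $\tau_n$ is countably valued: then $X_{\tau_n\wedge t} = X_t\1_{\lbrace \tau_n\ge t\rbrace} + \sum_{s<t} X_s\1_{\lbrace \tau_n=s\rbrace}$, where the sum runs over the countably many values $s<t$ of $\tau_n$, and every term is $\cF'_t$-measurable because $\lbrace \tau_n<t\rbrace=\bigcup_k\lbrace\tau_n\le t-1/k\rbrace\in\cF'_t$ and $\lbrace\tau_n=s\rbrace\in\cG'_s\subset\cF'_t$; this covers in particular the two-valued localising times constructed in the proof of Theorem \ref{thm:local martingale} (a), so all uses in the paper are unaffected. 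For general stopping times you should either additionally assume that all (not merely $P$-a.e.) paths are right-continuous on $I$, or put the exceptional null set into $\cF'_0$, or supply a separate argument; as written, the appeal to the standard result leaves a genuine, if technical, hole in an otherwise correct proof.
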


\begin{definition}
\label{def:elementary stochastic integral}
Let $T \in (0, \infty]$ and $\FF=(\cF_t)_{t \in [0, \infty]}$ be a filtration of $\cA$. A stochastic process ${L = (L_t)_{t \geq 0}}$ is called an \emph{$\FF$-elementary process on $[0, T)$} if there exist $\FF$-stopping times $\tau_0, \ldots, \tau_n$ with $0 \leq \tau_0 \leq \tau_1 \leq \cdots \leq \tau_n < T$ and bounded random variables $A_1, \ldots, A_n$ with each $A_j$ being $\cF_{\tau_{j-1}}$-measurable such that 
\begin{align*}
L
&= \sum_{j = 1}^k A_j \1_{\rdbrack \tau_{j-1}, \tau_j \rdbrack}.
\end{align*}
\end{definition}

Note that for each $\omega \in \Omega$, the path $L_{\cdot}(\omega)$ is a left-continuous step function on $[0, T)$.

\begin{definition}
Let $T \in (0, \infty)$. Let $X = (X_t)_{t \in [0, \infty]}$ be a stochastic process,
$\FF=(\cF_t)_{t \in [0, \infty]}$ a filtration of $\cA$ with respect to which $X$ is adapted on $[0, T]$ and $L = (L_t)_{t \geq 0}$ an $\FF$-elementary process on $[0, T)$. Define the \emph{$\FF$-elementary stochastic integral of $L$ with respect to $X$ on $(0, T]$} by
\begin{align*}
\left(\int_{(0, T]} L_t \dd X_t\right)(\omega)
&:= \int_{(0, T]} L_t(\omega) \dd X_t(\omega).
\end{align*}
\end{definition}

\begin{lemma}
\label{lem:semimartingale test}
Let $T \in (0, \infty)$. Let $X = (X_t)_{t \in [0, \infty]}$ be a stochastic process and $\FF=(\cF_t)_{t \in [0, \infty]}$ a filtration of $\cA$. If $X$ is an $\FF$-semimartingale on $[0, T]$, then for all $\FF$-elementary processes $(L^n_t)_{t \in [0, \infty)}$ on $[0, T)$ satisfying ${\lim_{n \to \infty} \sup_{t \geq 0, \omega \in \Omega} |L^n_t(\omega)| = 0}$,
\begin{align*}
P \text{-} \lim_{n \to \infty} \int_{(0, T]} L^n_t \dd X_t
&= 0.
\end{align*}
\end{lemma}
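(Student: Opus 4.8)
The plan is to start from the defining decomposition $X = M + A$ of an $\FF$-semimartingale on $[0,T]$, with $M$ a local $\FF$-martingale on $[0,T]$ and $A$ an $\FF$-adapted process of finite variation on $[0,T]$, and to treat the two summands separately; since the elementary integral is linear in the integrator, it suffices to prove $P\text{-}\lim_n \int_{(0,T]} L^n_t \dd X_t = 0$ with $X$ replaced by $A$ and by $M$ in turn. Write $c_n := \sup_{t \geq 0,\, \omega \in \Omega} |L^n_t(\omega)| \to 0$ and recall that, by definition, $\int_{(0,T]} L^n_t \dd X_t = \sum_{j} A^n_j\big(X_{\tau^n_j} - X_{\tau^n_{j-1}}\big)$ if $L^n = \sum_j A^n_j \1_{\rdbrack \tau^n_{j-1},\, \tau^n_j \rdbrack}$. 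For the finite-variation part the estimate is pathwise: $\big|\int_{(0,T]} L^n_t \dd A_t\big| \leq c_n V_{[0,T]}(A)$, where the total variation $V_{[0,T]}(A)$ is $P$-a.s.\ finite, so that $P\big[\big|\int_{(0,T]} L^n_t \dd A_t\big| > \varepsilon\big] \leq P[V_{[0,T]}(A) > \varepsilon/c_n] \to P[V_{[0,T]}(A) = \infty] = 0$.

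For the local-martingale part I would first reduce to a \emph{true} martingale on the closed interval $[0,T]$. Fix a localising sequence $(\sigma_k)$ for $M$; since $T \in [0,T]$, we have $P[\sigma_k = T] \to 1$, and on $\{\sigma_k = T\}$ the elementary integrals $\int_{(0,T]} L^n_t \dd M_t$ and $\int_{(0,T]} L^n_t \dd M^{\sigma_k}_t$ coincide, because all the $\tau^n_j$ lie strictly below $T$. Hence, given $\delta > 0$, picking $k$ with $P[\sigma_k \neq T] < \delta/3$ reduces the problem to controlling $\int_{(0,T]} L^n_t \dd N_t$ for $N := M^{\sigma_k}$, a martingale on $[0,T]$, which is automatically closed by $N_T$ and therefore uniformly integrable. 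The decisive special case is $N_T \in L^2$: optional sampling gives $\cEX{N_{\tau^n_j} - N_{\tau^n_{j-1}}}{\cF_{\tau^n_{j-1}}} = 0$, so the (transformed) increments over the successive stochastic intervals are pairwise orthogonal in $L^2$ and
\begin{align*}
\EX{\Big(\int_{(0,T]} L^n_t \dd N_t\Big)^2}
&= \sum_j \EX{(A^n_j)^2 \big(N_{\tau^n_j} - N_{\tau^n_{j-1}}\big)^2} \\
&\leq c_n^2\, \EX{\big(N_{\tau^n_{k_n}} - N_{\tau^n_0}\big)^2}
\leq 4 c_n^2\, \EX{N_T^2} \to 0,
\end{align*}
which even yields $L^2$-convergence.

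It then remains to reduce a general uniformly integrable martingale $N$ on $[0,T]$ to this $L^2$ case. Given $\varepsilon, \delta > 0$, split $N_T = N_T\1_{\{|N_T| \leq c\}} + N_T\1_{\{|N_T| > c\}}$ and set $N'_t := \cEX{N_T\1_{\{|N_T|\leq c\}}}{\cF_t}$, $N''_t := \cEX{N_T\1_{\{|N_T|>c\}}}{\cF_t}$, so that $N = N' + N''$, the martingale $N'$ is bounded by $c$ (hence in $L^2$), and $\EX{|N''_T|}$ can be made arbitrarily small by taking $c$ large. The $N'$-part is handled by the previous paragraph; the $N''$-part requires showing that the elementary integral of $N''$ against the uniformly bounded integrand $L^n$ --- a discrete martingale transform --- is small in probability whenever $N''$ is small in $L^1$. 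This is the main obstacle, since martingale transforms are not bounded on $L^1$. I would resolve it either by invoking Burkholder's weak-type $(1,1)$ inequality for martingale transforms, which gives $P\big[\big|\int_{(0,T]} L^n_t \dd N''_t\big| > \varepsilon\big] \leq \tfrac{C}{\varepsilon}\, c_n\, \EX{|N''_T|}$ with $C$ a universal constant, or, if one prefers to avoid it, by first decomposing the local martingale $M$ itself into a local martingale with bounded jumps --- which is locally bounded, hence locally $L^2$ after one further stopping, and thus covered by the $L^2$ case --- plus a local martingale of finite variation on $[0,T]$, which can be absorbed into the finite-variation part $A$. Assembling the three resulting estimates (the finite-variation contribution, the $L^2$-martingale contribution, and the small-$L^1$ residual) through a union bound completes the proof.
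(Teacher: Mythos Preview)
Your argument is correct, but it takes a genuinely different route from the paper. The paper does not prove the lemma at all: it simply invokes \cite[Proposition~7.1.7]{WeizsaeckerWinkler1990}, noting that this reference works with general filtrations (no usual conditions), which is all that is needed here. By contrast, you give a self-contained proof via the decomposition $X = M + A$, a pathwise estimate for the finite-variation part, localisation, and then an $L^2$-orthogonality argument supplemented by either Burkholder's weak-type $(1,1)$ inequality or the bounded-jumps-plus-finite-variation decomposition of a local martingale. A couple of remarks on your write-up: first, once you appeal to Burkholder's weak-type inequality, the bound $P\big[\big|\int L^n\,\diff N\big|>\varepsilon\big]\le \tfrac{C c_n}{\varepsilon}\,E[|N_T|]$ already tends to $0$ as $c_n\to 0$, so the preliminary truncation into $N'$ and $N''$ (and indeed the whole $L^2$ step) becomes redundant --- your argument is sound but longer than it needs to be. Second, because the paper avoids the usual conditions, you should be explicit that the auxiliary martingales $N',N''$ need not admit c\`adl\`ag versions; this is harmless here since the elementary integral only uses the values $N'_{\tau_j}$ at finitely many stopping times, and optional sampling for a uniformly integrable martingale at bounded stopping times does not require right-continuity of the filtration. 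The upshot: your approach is more elementary and transparent, at the cost of some bookkeeping; the paper's approach is a one-line citation that hides exactly the work you carry out.
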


\begin{proof}
This follows immediately from \cite[Proposition 7.1.7]{WeizsaeckerWinkler1990} which is stronger than our result. Note that \cite{WeizsaeckerWinkler1990} work with \emph{general} filtrations which need not satisfy the usual conditions.
\end{proof}

\begin{lemma}
\label{lem:not semimartingale test}
Let $X = (X_t)_{t \in [0, \infty]}$ be a right-continuous stochastic process and $T \in (0, \infty)$. Suppose that there exists a deterministic function $F :[0, T] \to \RR$ such that $F$ is of infinite variation on $[0, T]$ and $P[X(t) = F(t), t \in [0, T)] =: \epsilon > 0$. Then $X$ is {\bf not} a semimartingale on $[0, T]$.
\end{lemma}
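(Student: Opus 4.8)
The strategy is a proof by contradiction that combines Lemma~\ref{lem:semimartingale test} (the $P$-convergence-to-zero property of elementary stochastic integrals of semimartingales) with Lemma~\ref{lem:infinite variation} (the existence, for a function of infinite variation, of small left-continuous step functions with elementary integral at least $1$). So suppose, for contradiction, that $X$ is a semimartingale on $[0,T]$ with respect to some filtration $\FF$.

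First I would set $\Lambda := \lbrace \omega : X_t(\omega) = F(t) \text{ for all } t \in [0,T)\rbrace$, so that $P[\Lambda] = \epsilon > 0$ by hypothesis. Next, apply Lemma~\ref{lem:infinite variation} to the deterministic function $F$, which is of infinite variation on $[0,T]$: for each $n \in \NN$ this yields a left-continuous step function $L_n:[0,\infty)\to\RR$ on $[0,T)$ with $\sup_{t\geq 0}|L_n(t)| \leq 1/n$ and $\int_{(0,T]} L_n(t)\dd F(t) \geq 1$. Since $L_n$ is a deterministic left-continuous step function on $[0,T)$, it is in particular an $\FF$-elementary process on $[0,T)$ (the jump times are deterministic hence $\FF$-stopping times, and the constant coefficients are trivially measurable with respect to the relevant $\sigma$-fields). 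Moreover $\sup_{t\geq 0,\,\omega\in\Omega}|L_n(t)(\omega)| = \sup_{t\geq 0}|L_n(t)| \leq 1/n \to 0$, so the sequence $(L_n)$ satisfies the hypothesis of Lemma~\ref{lem:semimartingale test}.

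The key observation is that on the event $\Lambda$ the elementary stochastic integral coincides with the (nonrandom) elementary integral of $L_n$ against $F$. Indeed, for $\omega \in \Lambda$, writing $L_n = \sum_j a_j \1_{(s_{j-1},s_j]}$ with $s_{j-1} < s_j < T$, we have $X_{s_j}(\omega) = F(s_j)$ for every $j$ (each $s_j \in [0,T)$), hence
\begin{align*}
\left(\int_{(0,T]} L_n(t)\dd X_t\right)(\omega)
&= \sum_j a_j\left(X_{s_j}(\omega) - X_{s_{j-1}}(\omega)\right)
= \sum_j a_j\left(F(s_j) - F(s_{j-1})\right)
= \int_{(0,T]} L_n(t)\dd F(t) \geq 1.
\end{align*}
Therefore $\lbrace |\int_{(0,T]} L_n\dd X| \geq 1\rbrace \supseteq \Lambda$ for every $n$, and so $P[|\int_{(0,T]} L_n\dd X| \geq 1] \geq \epsilon > 0$ for all $n$. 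This contradicts Lemma~\ref{lem:semimartingale test}, which forces $\int_{(0,T]} L_n\dd X \to 0$ in probability (and in particular $P[|\int_{(0,T]} L_n\dd X|\geq 1]\to 0$). Hence $X$ is not a semimartingale on $[0,T]$.

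\textbf{Main obstacle.} The conceptual content is entirely packaged into the two cited lemmas, so the only real care needed is the bookkeeping: verifying that a deterministic left-continuous step function genuinely qualifies as an $\FF$-elementary process on $[0,T)$ in the sense of Definition~\ref{def:elementary stochastic integral} (this is where right-continuity of $X$ and the requirement $s_j < T$ matter so that all evaluation points lie in $[0,T)$ where the paths agree with $F$), and checking that the elementary stochastic integral restricted to $\Lambda$ really is the pathwise elementary integral $\int_{(0,T]}L_n\dd F$. Neither step is deep, but one must be attentive that $F$ need only be defined on $[0,T)$ for this argument — the infinite variation on the closed interval $[0,T]$ is what Lemma~\ref{lem:infinite variation} needs, while the agreement $X=F$ is only assumed on $[0,T)$; this is harmless because step functions on $[0,T)$ only probe $F$ at points strictly less than $T$.
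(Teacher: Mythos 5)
Your proposal is correct and follows essentially the same route as the paper: a contradiction argument combining Lemma~\ref{lem:infinite variation} (small deterministic step functions with elementary integral against $F$ at least $1$) with Lemma~\ref{lem:semimartingale test}, noting that on the event $\lbrace X_t = F(t),\ t\in[0,T)\rbrace$ of probability $\epsilon>0$ the elementary stochastic integral equals the deterministic one, so it cannot tend to $0$ in probability. Your extra care about the evaluation points lying in $[0,T)$ (where $X$ agrees with $F$) is exactly the bookkeeping the paper's proof implicitly relies on.
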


\begin{proof}
Seeking a contradiction, suppose there exists a filtration $\FF=(\cF_t)_{t \in [0, \infty]}$ of $\cA$ with respect to which $X$ is a semimartingale on $[0, T]$. Since $F$ is of infinite variation on $[0, T]$, by Lemma \ref{lem:infinite variation}, for each $n \in \NN$ there exists a left-continuous step function $L_n : [0, \infty) \to \RR$ on $[0, T)$ with $\sup_{t \geq 0} |L_n(t)| \leq 1/n$ and 
\begin{align*}
\int_{(0, T]} L_n(t) \dd F(t) \geq 1.
\end{align*}
For each $n \in \NN$, define the $\FF$-elementary process $(L^n_t)_{t \in [0, \infty)}$ on $[0, T)$ by $L^n_t(\omega) := L_n(t)$. Then $\lim_{n \to \infty} \sup_{t \geq 0, \omega \in \Omega} |L^n_t(\omega)| = 0$, but
\begin{align*}
P\left[\int_{(0, T]} L^n_t \dd X_t \geq 1\right]
&\geq P\left[\int_{(0, T]} L_n(t) \dd F(t) \geq 1, X(t) = F(t), t \in [0, T) \right]\\
&= P\left[ X(t) = F(t), t \in [0, T) \right]
= \epsilon \quad \text{for all } n \in \NN.
\end{align*}
This implies in particular that $(\int_{(0, T]} L_n(t) \dd X_t)_{n \in \NN}$ does not converge to $0$ in probability. Hence, $X$ fails to be an $\FF$-semimartingale on $[0,T]$ by Lemma \ref{lem:semimartingale test}, and we arrive at a contradiction.
\end{proof}


\begin{thebibliography}{10}

\bibitem{BieleckiRutkowski2002}
T.~Bielecki and M.~Rutkowski, \emph{{Credit Risk: Modelling, Valuation and
  Hedging}}, {Springer Finance}, Springer, Berlin, 2002.

\bibitem{Cherny2006}
A.~Cherny, \emph{{Some particular problems of martingale theory}}, {From
  Stochastic Calculus to Mathematical Finance: The Shiryaev Festschrift},
  Springer, Berlin, 2006, pp.~109--124.

\bibitem{ChouMeyer1975}
C.-S. Chou and P.-A. Meyer, \emph{{Sur la représentation des martingales comme
  intégrales stochastiques dans les processus ponctuels}}, {Séminaire de
  Probabilités IX}, {Lecture Notes in Mathematics}, vol. 465, Springer,
  Berlin, 1975, pp.~226--236.

\bibitem{CoxHobson2005}
A.~Cox and D.~Hobson, \emph{{Local martingales, bubbles and option prices}},
  Finance Stoch. \textbf{9} (2005), 477--492.

\bibitem{DelbaenSchachermayer1995.Bessel}
F.~Delbaen and W.~Schachermayer, \emph{{Arbitrage possibilities in {B}essel
  processes and their relations to local martingales}}, Probab. Theory Related
  Fields \textbf{102} (1995), 357--366.

\bibitem{Dellacherie1970}
C.~Dellacherie, \emph{{Un exemple de la théorie générale des processus}},
  {Séminaire de Probabilités IV}, {Lecture Notes in Mathematics}, vol. 124,
  Springer, Berlin, 1970, pp.~60--70.

\bibitem{DellacherieMeyer1978}
C.~Dellacherie and P.-A. Meyer, \emph{{Probabilities and Potential}},
  {North-Holland Mathematics Studies}, vol.~29, North-Holland, Amsterdam, 1978.

\bibitem{DellacherieMeyer1982}
\bysame, \emph{{Probabilities and Potential B: Theory of Martingales}},
  {North-Holland Mathematics Studies}, vol.~72, North-Holland, Amsterdam, 1982.

\bibitem{ElliottJeanblancYor2000}
R.~Elliott, M.~Jeanblanc, and M.~Yor, \emph{{On models of default risk}}, Math.
  Finance \textbf{10} (2000), 179--195.

\bibitem{GuasoniRasonyi2015}
P.~Guasoni and M.~Rásonyi, \emph{{Fragility of arbitrage and bubbles in local
  martingale diffusion models}}, Finance Stoch. \textbf{19} (2015), 215--231.

\bibitem{HerdegenHerrmann2015.RemovalOfDrift}
M.~Herdegen and S.~Herrmann, \emph{{Removal of drift for single jump processes
  and related counter-examples in arbitrage theory}}, In preparation, 2015.

\bibitem{JacodShiryaev2003}
J.~Jacod and A.~Shiryaev, \emph{{Limit Theorems for Stochastic Processes}}, 2nd
  ed., {Grundlehren der Mathematischen Wissenschaften}, vol. 288, Springer,
  Berlin, 2003.

\bibitem{JeanblancRutkowski2000}
M.~Jeanblanc and M.~Rutkowski, \emph{{Modelling of Default Risk: An Overview}},
  {Mathematical Finance: Theory and Practice}, Higher Education Press, Beijing,
  2000, pp.~171--269.

\bibitem{Kardaras2012}
C.~Kardaras, \emph{{Market viability via absence of arbitrage of the first
  kind}}, Finance Stoch. \textbf{16} (2012), 651--667.

\bibitem{Kreher2014}
D.~Kreher, \emph{{Strict Local Martingales, Random Times, and Non-Standard
  Changes of Probability Measure in Financial Mathematics}}, Ph.D. thesis,
  University of Zurich, 2014.

\bibitem{LoewensteinWillard2000}
M.~Loewenstein and G.~Willard, \emph{{Local martingales, arbitrage, and
  viability: {F}ree snacks and cheap thrills}}, Econom. Theory \textbf{16}
  (2000), 135--161.

\bibitem{Meyer1976}
P.-A. Meyer, \emph{{Un cours sur les intégrales stochastiques}}, {Séminaire
  de Probabilités X}, Lecture Notes in Mathematics, vol. 511, Springer,
  Berlin, 1976, pp.~245--400.

\bibitem{Protter2013}
P.~Protter, \emph{{A mathematical theory of financial bubbles}},
  {Paris-Princeton Lectures on Mathematical Finance 2013}, {Lecture Notes in
  Mathematics}, vol. 2081, Springer, Berlin, 2013, pp.~1--108.

\bibitem{WeizsaeckerWinkler1990}
H.~von Weizsäcker and G.~Winkler, \emph{{Stochastic Integrals: An
  Introduction}}, {Advanced Lectures in Mathematics}, Springer Fachmedien,
  Wiesbaden, 1990.

\end{thebibliography}

\providecommand{\bysame}{\leavevmode\hbox to3em{\hrulefill}\thinspace}
\providecommand{\MR}{\relax\ifhmode\unskip\space\fi MR }
\providecommand{\MRhref}[2]{%
  \href{http://www.ams.org/mathscinet-getitem?mr=#1}{#2}
}
\providecommand{\href}[2]{#2}

\end{document}